\date{}
\newtheorem{statement}{}[section]
\newtheorem{theorem}[statement]{Theorem}
\newtheorem{lemma}[statement]{Lemma}
\newtheorem{proposition}[statement]{Proposition}
\newtheorem{definition}[statement]{Definition}
\newtheorem{corollary}[statement]{Corollary}
\newcommand\C{\mathbb C}
\newcommand\R{\mathbb R}
\newcommand\T{\mathbb T}
\newcommand\D{\mathbb D}
\newcommand\e{{\rm e}}
\newcommand\eps{\varepsilon}
\newcommand\ind{\mathds{1}}
\newcommand\dis{\displaystyle}
\renewcommand \Re{{\mathfrak R}{\rm e}\,}
\newcommand\converge{\mathop{\longrightarrow}\limits}
\let\phi=\varphi
\let\hat = \widehat
\let\tilde=\widetilde
\newcommand\tq{\, ; \ }
\newcommand{\overbar}[1]{\mkern 1.5mu\overline{\mkern-1.5mu#1\mkern-1.5mu}\mkern 1.5mu}
\title{\bf Boundedness of composition operators on general weighted Hardy spaces of analytic functions}
\author{\it Pascal~Lef\`evre, Daniel~Li,  \\ \it Herv\'e~Queff\'elec, Luis~Rodr{\'\i}guez-Piazza}
\date{\footnotesize \today}
\begin{document}

\maketitle

\noindent {\bf Abstract.} We characterize the (essentially) decreasing sequences of positive numbers $\beta = (\beta_n)$ for which all composition operators on 
$H^2 (\beta)$ are bounded, where $H^2 (\beta)$ is the space of analytic functions $f$ in the unit disk such that 
$\sum_{n = 0}^\infty |c_n|^2 \beta_n < \infty$ if $f (z) = \sum_{n = 0}^\infty c_n z^n$. We also give conditions for the boundedness when $\beta$ is not 
assumed essentially decreasing.
\medskip

\noindent {\bf MSC 2010} primary: 47B33 ; secondary: 30H10
\smallskip

\noindent {\bf Key-words} automorphism of the unit disk; composition operator; $\!\Delta_2$-condition; multipliers; weighted Hardy space 

\section {Introduction} \label{sec: intro}

Let $\beta = (\beta_n)_{n\geq 0}$ be a sequence of positive numbers such that
\begin{equation} \label{unus} 
\liminf_{n \to \infty} \beta_n^{1/n} \geq 1 \, .
\end{equation} 
The associated weighted Hardy space $H^{2} (\beta)$ is the set of analytic functions $f (z) = \sum_{n = 0}^\infty a_n z^n$  such that 
 \begin{equation} \label{duo} 
\Vert f \Vert^2 := \sum_{n = 0}^\infty |a_n|^2 \beta_n < \infty \, . 
\end{equation}
It is a Hilbert space of analytic functions on $\D$. When $\beta_n \equiv 1$, we recover the usual Hardy space $H^2$.
\smallskip

Recall that a symbol is a (non constant) analytic self-map $\phi \colon \D \to \D$, and the associated composition operator 
$C_\phi \colon H^2 (\beta) \to {\cal H}ol (\D)$ is defined (formally) as:
\begin{equation} 
C_\phi (f) = f \circ \phi \, .
\end{equation} 

An important question in the theory is to decide when $C_\phi$ is bounded on $H^2 (\beta)$, i.e. when  $C_\phi \colon H^2 (\beta) \to H^2 (\beta)$. 
When $H^2 (\beta)$ is the usual Hardy space $H^2$ (i.e. when $\beta_n \equiv 1$), it is well-known (\cite[pp.~13--17]{Shapiro-livre}) that all symbols generate 
bounded composition operators. On the other hand, for the Dirichlet space, corresponding to $\beta_n = n + 1$, not all composition operators are bounded. 
Note that, by definition of the norm of $H^2 (\beta)$, all rotations $R_\theta$, $\theta \in \R$, induce bounded composition operators on $H^2 (\beta)$ 
and send isometrically $H^2 (\beta)$ into itself.
\smallskip

Our goal in this paper is characterizing the (non-increasing) sequences $\beta$ for which all composition operators act boundedly on the space $H^2 (\beta)$, i.e. 
send $H^2 (\beta)$ into itself. We will prove the following result, where $T_a (z) = \frac{a + z}{1 + \bar{a} \, z}$ for $a, z \in \D$.

\begin{theorem} \label{main theorem completed}
Let $\beta$ be an essentially decreasing sequence of positive numbers. The following assertions are equivalent:

\begin{itemize}
\setlength\itemsep {-0.1 em}

\item [$1)$] all composition operators are bounded on $H^2 (\beta)$;

\item [$2)$] all maps $T_a$, for $0 < a < 1$, induce bounded composition operators $C_{T_a}$ on $H^2 (\beta)$;

\item [$3)$] for some $a \in (0, 1)$, the map $T_a$ induces a bounded composition operator $C_{T_a}$ on $H^2 (\beta)$;

\item [$4)$] $\beta$ satisfies the $\Delta_2$-condition.
\end{itemize}
\end{theorem}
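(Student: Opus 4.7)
The implications $1) \Rightarrow 2) \Rightarrow 3)$ are immediate. The content of the theorem therefore lies in $3) \Rightarrow 4)$ and $4) \Rightarrow 1)$. I would first reduce $4) \Rightarrow 1)$ to the M\"obius case via Kacnel'son's theorem, and then carry the weight of the argument through a careful analysis of the operators $C_{T_a}$.

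\textbf{Reducing $4) \Rightarrow 1)$ to the symbols $T_a$.} Given any symbol $\phi$, set $a = \phi(0)$ and $\psi = T_{-a} \circ \phi$, so that $\psi(0) = 0$ and $\phi = T_a \circ \psi$, whence $C_\phi = C_\psi \circ C_{T_a}$. Since $\beta$ is essentially decreasing and $\psi$ fixes the origin, Kacnel'son's theorem (recalled in the introduction) yields boundedness of $C_\psi$. It therefore suffices to prove $4) \Rightarrow 2)$: under $\Delta_2$ every $C_{T_a}$ with $0 < a < 1$ is bounded on $H^2(\beta)$.

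\textbf{Sufficiency, $4) \Rightarrow 2)$.} I would expand
\[
T_a(z)^n = \Bigl( \frac{a+z}{1+az} \Bigr)^n = \sum_{k \geq 0} c_{n,k}(a)\, z^k,
\]
so that $\|C_{T_a} e_n^\beta\|^2 = \beta_n^{-1}\sum_k |c_{n,k}(a)|^2 \beta_k$. Since $T_a^n$ is inner one has $\sum_k |c_{n,k}(a)|^2 = 1$, and an explicit binomial or saddle-point computation concentrates the mass of $\bigl(c_{n,k}(a)\bigr)_k$ on a window of size $O(\sqrt n)$ about an index comparable to $n$, with geometric tails outside. Essentially decreasing gives $\beta_k \leq C \beta_n$ for $k \leq n$; iterating $\Delta_2$ yields $\beta_k \leq C^j \beta_n$ for $k \leq 2^j n$, which together with the geometric tail produces a uniform bound $\sum_k |c_{n,k}(a)|^2 \beta_k \leq M(a)^2 \beta_n$. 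Transferring this from the orthonormal basis $(e_n^\beta)$ to arbitrary $f \in H^2(\beta)$ (by density or through the reproducing-kernel formalism) yields boundedness of $C_{T_a}$.

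\textbf{Necessity, $3) \Rightarrow 4)$, and the main obstacle.} I would pass through the adjoint identity $C_{T_a}^* K_w = K_{T_a(w)}$, giving for $w = r \in (0,1)$
\[
\sum_{n \geq 0} \frac{T_a(r)^{2n}}{\beta_n} \;\leq\; \|C_{T_a}\|^2 \sum_{n \geq 0} \frac{r^{2n}}{\beta_n}.
\]
Since $1 - T_a(r)^2$ is comparable to $1 - r^2$ with proportionality constant $\lambda(a) \in (0,1)$, the generating function $K(t) = \sum t^n/\beta_n$ satisfies a scaling inequality $K(1 - \lambda u) \leq M^2 K(1 - u)$ near $0$, which a Tauberian argument (exploiting that $1/\beta_n$ is essentially increasing) converts into $\beta_n \leq C \beta_{2n}$, i.e.\ $\Delta_2$. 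If a single value of $a$ does not reach the doubling ratio directly, the semigroup identity $T_a \circ T_a = T_{2a/(1+a^2)}$ and boundedness of the iterate $C_{T_a}^2 = C_{T_a \circ T_a}$ unlock parameters arbitrarily close to $1$. The principal obstacle is the quantitative coefficient analysis of $T_a^n$ --- sharp two-sided control in $k$ --- and, on the necessity side, the Tauberian bridge from the scaling inequality on $K$ to the combinatorial $\Delta_2$-condition on the sequence $\beta$.
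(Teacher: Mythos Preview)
Your sketch for $4)\Rightarrow 2)$ has a real gap: the bound $\sum_k |c_{n,k}(a)|^2\beta_k\le M(a)^2\beta_n$ is only a column-norm estimate $\|C_{T_a}e_n^\beta\|\le M(a)$, and uniform control on an orthonormal basis does \emph{not} yield a bounded operator (think of $e_n\mapsto e_0$ on $\ell^2$). Neither density of polynomials nor the identity $C_{T_a}^{\ast}K_w=K_{T_a(w)}$ repairs this --- the latter just reproduces a column-type bound on the adjoint side. The paper works with the full matrix $A_\beta=\big(\sqrt{\beta_m/\beta_n}\,a_{m,n}\big)_{m,n}$ of $C_{T_a}$ and decomposes it into: (i)~dyadic diagonal and immediately off-diagonal blocks, bounded via the $\Delta_2$-condition together with the boundedness of $C_{T_a}$ on the unweighted $H^2$; (ii)~a far upper-triangular piece that is Hilbert--Schmidt thanks to the Cauchy estimate $|a_{m,n}|\le e^{-\alpha(\rho n-m)}$; and (iii)~a lower-triangular remainder to which Kacnel'son's inequality is applied directly as a matrix inequality. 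This block structure is exactly what controls the cross-terms your column estimate ignores. (Incidentally, ``essentially decreasing'' gives $\beta_k\le C\beta_n$ for $k\ge n$, not $k\le n$; your tail discussion needs the inequality the other way.)

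For $3)\Rightarrow 4)$ your kernel-doubling route is genuinely different from the paper's, but the Tauberian step you flag as the obstacle is neither carried out nor routine. The inequality $\|K_{T_a(r)}\|\le M\|K_r\|$ is precisely what the paper exploits in Proposition~\ref{polynomial lb necessary} --- and there it yields only \emph{polynomial minoration}; since Proposition~\ref{simple proposition polynomial minoration} exhibits a non-increasing $\beta$ with polynomial minoration but without $\Delta_2$, something further must be squeezed from the doubling. Concretely, to pass from $K(1-u/2)\le M^2K(1-u)$ to $\beta_{2n}\ge\delta\beta_n$ you would need an upper bound of the form $K(1-1/n)\lesssim n/\beta_n$, and with $1/\beta_m$ merely non-decreasing the tail $\sum_{m>n}\beta_m^{-1}(1-1/n)^m$ cannot be controlled without already knowing a growth restraint on $1/\beta_m$ --- the argument threatens to be circular. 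The paper bypasses this entirely: a stationary-phase analysis of the oscillatory integral for $a_{m,n}$ proves $|a_{m,n}|\ge\delta\,n^{-1/2}$ on a set $E\subseteq[(1-2\mu)n,(1-\mu)n]$ with $|E|\gtrsim n$, and then $M^2\beta_n\ge\|T_a^{\,n}\|^2\ge\sum_{m\in E}|a_{m,n}|^2\beta_m$ forces $\beta_n\gtrsim\beta_{(1-\mu)n}$, which iterates to the $\Delta_2$-condition.
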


For the notion of essentially decreasing sequence, see Definition~\ref{def: essentially decreasing}; the $\Delta_2$-condition is defined in \eqref{Delta-deux}.
Actually, we will obtain Theorem~\ref{main theorem completed} by gluing Theorem~\ref{main theorem} and Theorem~\ref{theo necessity Delta-deux}. 
\medskip

Most of the existing works with weighted Hardy spaces concern the case 
\begin{equation} \label{integral beta}
\beta_n = \int_{0}^1 t^n \, d \sigma (t) 
\end{equation} 
where $\sigma$ is a positive measure on $(0, 1)$. More specifically the following definition is often used. Let $G \colon (0, 1) \to \R_+$ be an integrable function 
and let $H^2_G$ be the space of analytic functions $f \colon \D \to \C$ such that:
\begin{equation} \label{particular case}
\Vert f \Vert_{H^2_G}^{2} := \int_{\D} |f (z)|^2 \, G (1 - |z|^2) \, dA (z) < \infty \, .
\end{equation} 

Such weighted Bergman type spaces are used, for instance, in \cite{Kriete-MacCluer}, \cite{Karim-Pascal} and in \cite{LQR-radius}. We have 
$H^2_G = H^2 (\beta)$ with:
\begin{equation} \label{coto}
\beta_n = 2 \int_{0}^{1} r^{2 n + 1} G (1 - r^2) \, dr = \int_{0}^{1} t^{n} \, G (1 - t ) \, dt \, ,
\end{equation} 
and the sequence $\beta = (\beta_n)_n$ is non-increasing. 

In Shapiro's presentation, the main point is the case $\phi (0) = 0$ and a subordination principle for subharmonic functions (Littlewood's subordination principle). 
The case of automorphisms is claimed simple, using an integral representation for the norm and some change of variable.  When $\beta$ is defined as in 
\eqref{integral beta}, one disposes of integral representations for the norm in $H^2 (\beta)$, and, as in the Hardy space case, this integral representation rather 
easily  gives the boundedness of $C_{T_a}$ on $H^2$, where 
\begin{equation} 
T_ a (z) = \frac{a + z}{1+ \bar{a} \, z} 
\end{equation} 
for $a \in \D$. But the above representation \eqref{integral beta} is equivalent, by the Hausdorff moment theorem, to a high regularity of the sequence $\beta$, 
namely its \emph{complete monotony}. When integral representations fail, we have to work with bare hands. If  the symbol vanishes at the origin, 
Kacnel'son's theorem gives a positive answer when $\beta$ is essentially decreasing (see \cite{Isabelle} or \cite[Theorem~3.12]{LLQR-comparison}). Actually 
that follows from an older theorem of Goluzin \cite {Goluzin} (see \cite[Theorem~6.3]{Duren}), which itself uses a refinement by Rogosinski of Littlewood's 
principle (\cite[Theorem~6.2]{Duren}). So that the main issue remains  the boundedness of $C_{T_a}$. 

A polynomial minoration (see Definition~\ref{def: polynomial minoration} below) for $\beta$ is necessary for any $C_{T_a}$ to be bounded on $H^2 (\beta)$ 
(Proposition~\ref{polynomial lb necessary}) and we showed in \cite[end of Section~3]{LLQR-comparison} that for $\beta_n = \exp (- \sqrt{n})$, $C_{T_a}$ is 
never bounded on $H^{2} (\beta)$. But this polynomial minoration is not sufficient, as we will see in Theorem~\ref{main theorem completed} below, which also 
evidences the basic role of the maps $T_a$ in the question.
\smallskip

However, we construct a weight $\beta$ which is not essentially decreasing and for which all composition operators with symbol vanishing at $0$ are 
bounded (Theorem~\ref{theo example}), though no map $T_a$ with $0 < a < 1$ induces a bounded composition operator 
(Proposition~\ref{T_a not bounded}). 
\medskip

For spaces of Bergman type $A^2_{\tilde G} := H^2_G$, where $\tilde G (r) = G (1 - r^2)$, defined as the spaces of analytic functions in $\D$ such that 
$\int_\D | f (z)|^2 \, {\tilde G} (|z|) \, dA < \infty$, for a positive non-increasing continuous function ${\tilde G}$ on $[0, 1)$, Kriete and MacCluer studied in 
\cite{Kriete-MacCluer} some analogous problems. They proved, in particular \cite[Theorem~3]{Kriete-MacCluer} that, for:
\begin{displaymath} 
\qquad \qquad \qquad {\tilde G} (r) = \exp \, \bigg( - B \, \frac{1}{(1 - r)^\alpha} \bigg) \, , \qquad B > 0 \, , \ 0 < \alpha \leq 2 \, ,
\end{displaymath} 
and 
\begin{displaymath} 
\qquad \qquad \qquad \qquad \phi (z) = z + t (1 - z)^\beta \, , \qquad \qquad 1 <\beta \leq 3 \, , \ 0 < t < 2^{1 - \beta} \, ,
\end{displaymath} 
then $C_\phi$ is bounded on $A^2_{\tilde G}$ if and only if $\beta \geq \alpha + 1$.

Here
\begin{displaymath} 
\beta_n = \int_0^1 t^n \e^{- B / (1 - \sqrt{t})^\alpha} \, dt \, ;
\end{displaymath} 
and, since $\beta_n \approx \exp ( - c \, n^{\alpha / (\alpha  +1)} )$, the sequence $(\beta_n)$ does not satisfy the $\Delta_2$-condition, accordingly to 
our Theorem~\ref{main theorem} below.
\medskip

We end the paper with some miscellaneous remarks.

\section {Definitions, notation, and preliminary results} \label{sec: def}

The open unit disk of $\C$ is denoted $\D$ and we write $\T$ its boundary $\partial \D$. We set $e_n (z) = z^n$, $n \geq 0$.
\smallskip

The weighted Hardy space $H^2 (\beta)$ defined in the introduction is a Hilbert space with the canonical orthonormal basis 
\begin{equation} 
\qquad \qquad e^\beta_{n} (z) = \frac{1}{\sqrt{\beta_n}} \, z^n \, , \quad  n\geq 0 \, ,
\end{equation} 
and the reproducing kernel $K_w$ given for all $w\in \D$ by 
 \begin{equation} \label{tres} 
K_{w} (z) = \sum_{n = 0}^\infty e^\beta_{n} (z) \, \overbar{e^\beta_{n} (w)} 
= \sum_{n = 0}^\infty \frac{1}{\beta_n} \,\overbar{w}^n \, z^n \, .
\end{equation}

Note that \eqref{unus} is necessary for $H^2 (\beta)$ to consist of analytic functions in $\D$. Indeed the fact that 
$\sum_{n \geq 1} \frac{1}{n \sqrt{\beta_n}} \, z^n$ belongs to $H^2 (\beta)$ and is analytic in $\D$ implies \eqref{unus}.
Note also that $H^2$ is continuously embedded in $H^2 (\beta)$ if and only if $\beta$ is bounded above. In particular, this is the case when $\beta$ is 
non-increasing. In this paper, we need a slightly more general notion.

\begin{definition} \label{def: essentially decreasing}
A sequence of positive numbers $\beta = (\beta_n)$ is said \emph{essentially decreasing} if, for some constant $C \geq 1$, we have, for all $m \geq n \geq 0$:
\begin{equation} 
\beta_m \leq C \, \beta_n \, .
\end{equation} 
\end{definition}

Note that saying that $\beta$ is essentially decreasing means that the shift operator on $H^2 (\beta)$ is power bounded.
\smallskip

If $\beta$ is essentially decreasing, and if we set:
\begin{displaymath} 
\tilde \beta_n = \sup_{m \geq n} \beta_m \, ,
\end{displaymath} 
the sequence $\tilde \beta = (\tilde \beta_n)$ is non-increasing and we have $\beta_n \leq \tilde \beta_n \leq C \, \beta_n$. In particular, the space 
$H^2 (\beta)$ is isomorphic to $H^2 (\tilde \beta)$ and $H^2$ is continuously embedded in $H^2 (\beta)$. 

\begin{definition} \label{def: Delta-deux}
The sequence of positive numbers $\beta = (\beta_n)$ is said to satisfy the \emph{$\Delta_2$-condition} if there is a positive constant $\delta < 1$ such that,  
for all integers $n \geq 0$: 
\begin{equation} \label{Delta-deux}  
\beta_{2 n} \geq \delta \, \beta_n \, .
\end{equation}
\end{definition}

This terminology is given by analogy with that used for Orlicz functions. 

\begin{definition} \label{def: polynomial minoration}
The sequence of positive numbers $\beta = (\beta_n)$ is said to have a \emph{polynomial minoration} if  there are positive constants $\delta$ and $\alpha$ 
such that, for all integers $n \geq 1$:
\begin{equation}\label{slow}  
\beta_{n} \geq \delta \, n^{- \alpha}.
\end{equation} 
\end{definition}
That means that $H^2 (\beta)$ is continuously embedded in the weighted Bergman space ${\mathfrak B}^2_\alpha$ of the analytic functions 
$f \colon \D \to \C$ such that 
\begin{displaymath} 
\| f \|_{{\mathfrak B}^2_\alpha}^2 := (\alpha + 1) \int_\D |f (z)|^2 (1 - |z|^2)^\alpha \, dA (z) < \infty
\end{displaymath} 
since ${\mathfrak B}^2_\alpha = H^2 (\gamma)$ with $\gamma_n \approx n^{- \alpha}$.
\medskip

The following simple proposition links those notions.

\begin{proposition} \label{simple proposition polynomial minoration}
Let $\beta$ be an essentially decreasing sequence of positive numbers. Then if $\beta$ satisfies the $\Delta_2$-condition, it has a polynomial minoration.

The converse does not hold.
\end{proposition}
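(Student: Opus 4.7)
For the forward direction, my plan is to iterate the $\Delta_2$-condition and combine it with the fact that $\beta$ is essentially decreasing. A straightforward induction on $k$ gives $\beta_{2^k} \geq \delta^k \beta_1$ for every $k \geq 0$. Given $n \geq 1$, pick $k = \lceil \log_2 n \rceil$, so that $n \leq 2^k < 2 n$. Essential decrease yields $\beta_{2^k} \leq C \beta_n$, and combining,
\begin{displaymath}
\beta_n \geq \frac{1}{C} \, \beta_{2^k} \geq \frac{\delta^k \beta_1}{C} \, .
\end{displaymath}
Since $k \leq 1 + \log_2 n$ and $\delta < 1$, one has $\delta^k \geq \delta \cdot n^{- \alpha}$ with $\alpha = \log_2 (1/\delta) > 0$, which gives the polynomial minoration $\beta_n \geq (\delta \beta_1 / C) \, n^{- \alpha}$ for all $n \geq 1$.

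For the converse, I would construct an explicit counterexample as a piecewise-constant weight on rapidly growing blocks. Fix $\alpha > 0$, set $N_k = 2^{k^2}$ for $k \geq 0$ (so $N_0 = 1$), and define $\beta_n = N_k^{- \alpha}$ when $N_k \leq n < N_{k + 1}$, with $\beta_0 = 1$. The sequence $\beta$ is non-increasing, hence essentially decreasing. For $n \in [N_k, N_{k + 1})$ one has $\beta_n = N_k^{- \alpha} \geq n^{- \alpha}$, so the polynomial minoration is satisfied. To refute $\Delta_2$, take $n = N_{k + 1} - 1$: since $N_{k + 2}/N_{k + 1} = 2^{2 k + 3} \geq 8$, the doubled index $2 n$ lies in $[N_{k + 1}, N_{k + 2})$, so $\beta_{2 n} = N_{k + 1}^{- \alpha}$ and
\begin{displaymath}
\frac{\beta_{2 n}}{\beta_n} = \bigg( \frac{N_k}{N_{k + 1}} \bigg)^\alpha = 2^{- \alpha (2 k + 1)} \to 0 \quad \text{as } k \to \infty,
\end{displaymath}
so the $\Delta_2$-condition fails.

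No step is a serious obstacle; the only calibration required lies in the counterexample. To break $\Delta_2$ along a subsequence $n_k \to \infty$, one must have $N_{k + 1}/N_k \to \infty$, ensuring that doubling an index just below $N_{k + 1}$ lands in the next plateau, while essential decrease and polynomial minoration are automatic as soon as the plateau values $N_k^{- \alpha}$ decrease with $k$. The super-geometric growth $N_k = 2^{k^2}$ meets both constraints comfortably.
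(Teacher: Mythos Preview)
Your proof is correct and follows essentially the same route as the paper's. The forward direction is identical in structure (iterate $\Delta_2$ along powers of $2$, then use essential decrease to compare $\beta_n$ with $\beta_{2^k}$), and your counterexample uses piecewise-constant plateaus on super-geometrically growing blocks $N_k = 2^{k^2}$ where the paper uses $k!$; the mechanism and verification are the same.
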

\begin{proof} 
Assume $\beta_m \leq C \, \beta_n$ for $m \geq n$ and $\beta_{2 p} \geq \e^{- A} \,  \beta_p$. Let now $n$ be an  integer 
$\geq 2$, and $k \geq 1$ the smallest integer such that $2^k \geq n$, so that $k \leq a \log n$ with $a$ a positive constant. We get:
\begin{displaymath} 
\beta_n \geq C^{- 1} \beta_{2^k} \geq C^{- 1} \e^{- k A} \beta_1 \geq C^{- 1} \beta_1 \e^{- a A \log n} =: \rho \, n^{- \alpha} \, ,
\end{displaymath} 
with $\rho = C^{- 1} \beta_1$ and $\alpha = a A$. 
\smallskip

Let us now see that the converse does not hold. Let $\delta > 0$. We set $\beta_0 = \beta_1 = 1$ and for $n \geq 2$:
\begin{displaymath} 
\beta_n = \frac{1}{(k!)^\delta} \quad \text{when } k! < n \leq (k + 1)! \, .
\end{displaymath} 
The sequence $\beta$ is non-increasing.

For $n$ and $k$ as above, we have:
\begin{displaymath} 
\beta_n = \frac{1}{(k!)^\delta} \geq  \frac{1}{n^\delta} \, ;
\end{displaymath} 
hence $\beta$ has arbitrary polynomial minoration. However we have, for $k \geq 2$:
\begin{displaymath} 
\frac{\beta_{2 (k!)}}{\beta_{k!}} = \frac{(k!)^{- \delta}}{[(k - 1)!]^{- \delta}} = \frac{1}{k^{\delta}} \converge_{k \to \infty} 0 \, ,
\end{displaymath} 
so $\beta$ fails to satisfy the $\Delta_2$-condition.
\end{proof}
\smallskip

For $a \in \D$, we define:
\begin{equation} 
\qquad \quad T_a (z) = \frac{a + z}{1 + \bar{a} \, z} \, \raise 1 pt \hbox{,} \quad z \in \D \, .
\end{equation} 

Recall that $T_a$ is an automorphism of $\D$ and that $T_a (0) = a$ and $T_a (- a) = 0$.

Though we do not need this, we may remark that 
$(T_a)_{a \in (- 1, 1)}$ is a group and $(T_a)_{a \in (0, 1)}$ is a semigroup. 
It suffices to see that $T_a \circ T_b = T_{a \ast b}$, with:
\begin{equation} \label{produit}
a \ast b = \frac{a + b}{1 + a b} \, \cdot 
\end{equation} 
\goodbreak

\begin{proposition} \label{polynomial lb necessary}
Let $a \in (0, 1)$ and assume that $T_a$ induces a bounded composition operator on $H^2 (\beta)$. Then $\beta$ has a polynomial minoration.
\end{proposition}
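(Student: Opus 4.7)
The plan is to exploit the iteration of $C_{T_a}$ combined with a crude control of reproducing kernels at real points. Set $M = \|C_{T_a}\|$ and observe that $(C_{T_a})^k = C_{T_a^{(k)}}$, where $T_a^{(k)}$ denotes the $k$-fold iterate of $T_a$, hence $\|C_{T_a^{(k)}}\| \leq M^k$. The elementary inequality $|f(0)|^2 \beta_0 \leq \|f\|^2$ combined with $(f \circ T_a^{(k)})(0) = f(a_k)$, where $a_k := T_a^{(k)}(0)$, yields $|f(a_k)| \leq \beta_0^{-1/2} M^k \|f\|$ for every $f \in H^2(\beta)$. By the reproducing property this means $\|K_{a_k}\| \leq \beta_0^{-1/2} M^k$, and the expansion \eqref{tres} gives the key inequality
\begin{equation*}
\frac{a_k^{2n}}{\beta_n} \leq \|K_{a_k}\|^2 \leq \frac{M^{2k}}{\beta_0} \qquad \text{for all } n, k \geq 0.
\end{equation*}

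Next I would compute the iterates $a_k$ explicitly. From the identity $T_a \circ T_b = T_{a \ast b}$ of \eqref{produit}, a direct induction (or conjugation by the Cayley map $z \mapsto (1+z)/(1-z)$, which turns $T_a$ into multiplication by $c := (1+a)/(1-a) > 1$) gives $a_k = (c^k - 1)/(c^k + 1)$, so that $1 - a_k \leq 2 c^{-k}$.

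The last step is to optimize the key inequality over $k$. Given $n \geq 1$, choose $k = k(n)$ to be the smallest integer with $c^k \geq n$. Then $1 - a_k \leq 2/n$, so $a_k^{2n} \geq (1 - 2/n)^{2n}$ is bounded below by a positive absolute constant (for $n \geq 3$, say); simultaneously $k \leq 1 + (\log n)/\log c$, whence $M^{2k} \leq M^2 \, n^{\alpha}$ with $\alpha = 2 \log M / \log c$. Note that $M \geq 1$ automatically, since applying $C_{T_a}$ to the constant function $1$ gives $\|C_{T_a}\| \geq 1$, so $\alpha \geq 0$. Rearranging yields $\beta_n \geq \delta \, n^{-\alpha}$ for a suitable $\delta > 0$, as required.

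The only nontrivial idea is to see that raw iteration converts the exponential lower bound one gets from $k = 1$ alone (useless here) into a polynomial one; once this is spotted, the obstacle reduces to a short optimization in $k$ using the explicit form of $a_k$, which in turn is immediate from the semigroup structure recalled just before the statement.
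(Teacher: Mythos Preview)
Your proof is correct and follows essentially the same approach as the paper: iterate $T_a$ from the origin, bound the reproducing kernel $\|K_{a_k}\|$ by $\|C_{T_a}\|^k$ (the paper phrases this via $C_{T_a}^{\ast} K_w = K_{T_a(w)}$, you via the evaluation at $0$, which is the same thing), then balance $k$ against $n$. The only cosmetic difference is that you compute $a_k$ explicitly through the Cayley conjugation while the paper bounds $1 - u_{k+1}$ by a differential inequality and passes through an intermediate slow-growth estimate $\|K_x\| \lesssim (1-x)^{-s}$ before specializing to $x = 1 - 1/n$.
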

\begin{proof} 
Since 
\begin{displaymath} 
\| K_x \|^2 = \sum_{n = 0}^\infty \frac{x^{ 2 n}}{\beta_n} \,  \raise 1 pt \hbox{,}
\end{displaymath} 
we have $\| K_x \| \leq \| K_y \|$ for $0 \leq x \leq y < 1$.
\smallskip

We define by induction a sequence $(u_n)_{n \geq 0}$ with:
\begin{displaymath}
u_0 = 0 \qquad \text{and} \qquad u_{n + 1} = T_a (u_n) \, .
\end{displaymath}
Since $T_a (1) = 1$ (recall that $a \in (0, 1)$), we have:
\begin{displaymath}
1 - u_{n + 1} = \int_{u_n}^1 T_a ' (t) \, dt = \int_{u_n}^1 \frac{1 - a^2}{(1 + a t)^2} \, dt \, ;
\end{displaymath}
hence 
\begin{displaymath}
\frac{1 - a}{1 + a} \, (1 - u_n) \leq 1 - u_{n + 1} \leq (1 - a^2) (1 - u_n) \, .
\end{displaymath}
Let $0 < x < 1$. We can find $N \geq 0$ such that $u_N \leq x < u_{N  + 1}$. Then:
\begin{displaymath}
1 - x \leq 1 - u_N \leq (1 - a^2)^N \, .
\end{displaymath}

On the other hand, since $C_{T_a}^{^{\, \ast}} K_z = K_{T_a (z)}$ for all $z \in \D$, we have:
\begin{displaymath}
\| K_x \| \leq \| K_{u_{N  + 1}} \| \leq \| C_{T_a} \| \, \| K_{u_N} \| \leq \| C_{T_a} \|^{N  + 1} \| K_{u_0} \| = \| C_{T_a} \|^{N  + 1} \, .
\end{displaymath}
Let $ s \geq 0$ such that $(1 - a^2)^{- s} = \| C_{T_a} \|$. We obtain:
\begin{equation} \label{slow growth of reproducing kernels}
\| K_x \| \leq \| C_{T_a} \| \, \frac{1}{(1 - x)^s} \, \cdot
\end{equation}
But
\begin{displaymath}
\| K_x \|^2 = \sum_{k = 0}^\infty \frac{x^{2 k}}{\beta_k} \, ;
\end{displaymath}
so we get, for any $k \geq 2$:
\begin{displaymath}
\frac{x^{2 k}}{\beta_k} \leq  \| C_{T_a} \|^2 \frac{1}{(1 - x)^{2 s}} \, \cdot
\end{displaymath}
Taking $x = 1 - \frac{1}{k}\,$, we obtain $\beta_k \geq C \, k^{- 2 s}$. 
\end{proof}

\noindent {\bf Remarks.} 1) For example, when $\beta_n = \exp \big[ - c \, \big( \log (n + 1) \big)^2 \big]$, with $c > 0$, no $T_a$ induces a bounded 
composition operator on $H^2 (\beta)$, though all symbols $\phi$ with $\phi (0) = 0$ are bounded, since $\beta$ is decreasing, as we will see in 
Proposition~\ref{phi(0)=0}.
\smallskip

2) For the Dirichlet space ${\cal D}^2$, we have $\beta_n = n + 1$, but all the maps $T_a$ induce bounded composition operators on ${\cal D}^2$ 
(see \cite[Remark before Theorem~3.12]{LLQR-comparison}). In this case $\beta$ has a polynomial minoration though it is not bounded above. 
\smallskip

3) However, even for decreasing sequences, a polynomial minoration for $\beta$ is not enough for some $T_a$ to induce a bounded composition operator. 
Indeed, we saw in Proposition~\ref{simple proposition polynomial minoration} an example of a decreasing sequence $\beta$  with polynomial minoration, 
but not sharing the $\Delta_2$-condition, and we will see in Theorem~\ref{theo necessity Delta-deux} that the $\Delta_2$-condition is needed for having 
some $T_a$ inducing a bounded composition operator.
\smallskip

4) In \cite{Eva-Jonathan}, Eva Gallardo-Guti\'errez and Jonathan Partington give estimates for the norm of $C_{T_a}$, with $a \in (0, 1)$, when $C_{T_a}$ is 
bounded on $H^2 (\beta)$. More precisely, they proved that if $\beta$ is bounded above and $C_{T_a}$ is bounded, then
\begin{displaymath} 
\| C_{T_a} \| \geq \bigg( \frac{1 + a}{1 - a} \bigg)^\sigma \, \raise 1 pt \hbox{,}
\end{displaymath} 
where $\sigma = \inf \{s \geq 0 \tq (1 - z)^{- s} \notin H^2 (\beta) \}$, and 
\begin{displaymath} 
\| C_{T_a} \| \leq \bigg( \frac{1 + a}{1 - a} \bigg)^\tau \, \raise 1 pt \hbox{,}
\end{displaymath} 
where $\tau = \frac{1}{2} \, \sup \Re W (A)$, with $A$ the infinitesimal generator of the continuous semigroup $(S_t)$ defined as $S_t = C_{T_{\tanh t}}$, 
namely $(Af) (z) = f ' (z) (1 - z^2)$, and $W (A)$ its numerical range. 

For $\beta_n = 1 / (n + 1)^\nu$ with $0 \leq \nu \leq 1$, the two bounds coincide, so they get 
$\| C_{T_a} \| = \big( \frac{1 + a}{1 - a} \big)^{(\nu + 1)/2}$. 
\smallskip

5) We saw in the proof of Proposition~\ref{polynomial lb necessary} that if $C_{T_a}$ is bounded on $H^2 (\beta)$ for some 
$a \in (0, 1)$, then the reproducing  kernels $K_w$ have, by \eqref{slow growth of reproducing kernels}, a \emph{slow growth}:
\begin{equation} 
\| K_w \| \leq \frac{C}{(1 - |w|)^s} 
\end{equation} 
for positive constants $C$ and $s$. Actually, we have the following equivalence.

\begin{proposition}
The sequence $\beta$ has a polynomial minoration if and only if the reproducing kernels $K_w$ of $H^2 (\beta)$ have a slow growth.
\end{proposition}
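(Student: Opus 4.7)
The plan is to prove both implications directly from the power series expression $\|K_w\|^2 = \sum_{n \geq 0} |w|^{2n}/\beta_n$.

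For the easier direction, I will show that if $\beta_n \geq \delta n^{-\alpha}$ for $n \geq 1$, then the kernels have slow growth. Indeed, the lower bound gives $1/\beta_n \leq n^\alpha/\delta$ for $n \geq 1$, hence
\begin{displaymath}
\|K_w\|^2 \leq \frac{1}{\beta_0} + \frac{1}{\delta}\sum_{n \geq 1} n^\alpha |w|^{2n}.
\end{displaymath}
I would then invoke the standard estimate $\sum_{n \geq 1} n^\alpha r^n \leq C_\alpha/(1-r)^{\alpha+1}$ for $0 \leq r < 1$, applied with $r = |w|^2$, and use $1 - |w|^2 \geq 1 - |w|$ to conclude that $\|K_w\| \leq C/(1-|w|)^s$ with $s = (\alpha+1)/2$.

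For the reverse direction, the argument is already embedded in the proof of Proposition~\ref{polynomial lb necessary} and I would simply isolate it. Assuming $\|K_w\|^2 \leq C^2/(1-|w|)^{2s}$ for all $w \in \D$, every single term of the series is controlled:
\begin{displaymath}
\frac{x^{2n}}{\beta_n} \leq \|K_x\|^2 \leq \frac{C^2}{(1-x)^{2s}} \qquad \text{for } 0 \leq x < 1, \ n \geq 0.
\end{displaymath}
Choosing $x = 1 - 1/n$ for $n \geq 1$ gives $x^{2n} = (1-1/n)^{2n} \geq c_0 > 0$ (with $c_0 = e^{-2}$ asymptotically, adjusting constants for small $n$), and $(1-x)^{2s} = n^{-2s}$, yielding $\beta_n \geq (c_0/C^2)\, n^{-2s}$, i.e.\ polynomial minoration with $\alpha = 2s$.

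Neither direction presents a genuine obstacle; both reduce to the elementary interplay between power-series coefficients and boundary growth. If any step deserves care, it is just the uniform lower bound on $(1 - 1/n)^{2n}$ for all $n \geq 1$, which is handled by noting that this sequence is increasing toward $e^{-2}$, so in particular bounded below by its value at $n=1$ (which is $0$, so one should start from $n = 2$, or absorb the finitely many initial terms into the constant). The result can then be packaged cleanly, with explicit exponents $s = (\alpha+1)/2$ and $\alpha = 2s$ relating the two growth parameters.
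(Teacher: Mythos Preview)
Your proposal is correct and follows essentially the same approach as the paper: the paper refers back to the end of the proof of Proposition~\ref{polynomial lb necessary} for the direction ``slow growth $\Rightarrow$ polynomial minoration'' (exactly your choice $x = 1 - 1/n$, starting from $n \geq 2$), and for the converse it writes the single displayed inequality $\|K_w\|^2 \leq \beta_0^{-1} + \delta^{-1}\sum_{n \geq 1} n^\alpha |w|^{2n} \leq C/(1-|w|^2)^{\alpha+1}$, which is precisely your estimate. Your handling of the small-$n$ boundary case is appropriate and matches the paper's implicit treatment.
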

\begin{proof}
The sufficiency is easy and seen at the end of the proof of Proposition~\ref{polynomial lb necessary}. For the necessity, we only have to see that:
\begin{displaymath} 
\| K_w \|^2 = \frac{1}{\beta_0} + \sum_{n = 1}^\infty \frac{\, \, \, |w|^{2 n}}{\beta_n} 
\leq \frac{1}{\beta_0} + \delta^{- 1} \sum_{n = 1}^\infty n^\alpha |w|^{2 n} \leq \frac{C}{(1 - |w|^2)^{\alpha  + 1}} \, \cdot \qedhere
\end{displaymath} 
\end{proof}
\goodbreak

\section {Boundedness of composition operators} \label{sec: boundedness comp op}

We study in this section conditions ensuring that all composition operators on $H^2 (\beta)$ are bounded.

\subsection {Conditions on the weight} \label{sec: weight}

We begin with this simple observation.

\begin{proposition} 
If all composition operators, and even if all composition operators with symbol vanishing at $0$, are bounded on $H^2 (\beta)$, then the sequence 
$\beta$ is bounded above.
\end{proposition}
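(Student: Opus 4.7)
The plan is to deduce via the closed graph theorem that $H_0^\infty := \{f \in H^\infty : f(0) = 0\}$ embeds continuously into $H^2(\beta)$; boundedness of $\beta$ will then follow by testing the resulting inequality on the monomials $z^n$.

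First I would verify the set-theoretic inclusion $H_0^\infty \subseteq H^2(\beta)$. Any nonzero $f \in H_0^\infty$ is automatically non-constant (since $f(0) = 0$), so $\psi := f/(2\|f\|_\infty)$ is a non-constant analytic self-map of $\D$ with $\psi(0) = 0$, i.e.\ a symbol vanishing at the origin. The hypothesis makes $C_\psi$ bounded on $H^2(\beta)$; applying it to the monomial $z$ gives $\psi = C_\psi(e_1) \in H^2(\beta)$, whence $f = 2\|f\|_\infty\,\psi \in H^2(\beta)$.

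The linear inclusion $i : H_0^\infty \to H^2(\beta)$ then has closed graph, because point evaluations are continuous on $H^\infty$ trivially and on $H^2(\beta)$ through the reproducing kernel \eqref{tres}: if $f_n \to f$ in $H^\infty$ and $f_n \to g$ in $H^2(\beta)$, both convergences are pointwise, forcing $f = g$. The closed graph theorem then supplies a constant $C > 0$ with $\|f\|_{H^2(\beta)} \leq C \|f\|_\infty$ for every $f \in H_0^\infty$. Applying this to $f = z^n$ for $n \geq 1$ (whose sup-norm is $1$ and $H^2(\beta)$-norm is $\sqrt{\beta_n}$) yields $\beta_n \leq C^2$, and combined with the finiteness of $\beta_0$ one concludes that $\beta$ is bounded above. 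No step presents a substantial obstacle; the only minor verification is that the normalized function $\psi = f/(2\|f\|_\infty)$ really is a symbol (a non-constant self-map of $\D$), which is automatic once one notes that $f(0) = 0$ prevents a nonzero $f \in H_0^\infty$ from being constant.
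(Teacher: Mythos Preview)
Your argument is correct, but it is organized differently from the paper's. The paper proceeds by contraposition: if $\beta$ were unbounded, pick a subsequence with $\beta_{n_k}\geq 4^k$ and set $\phi(z)=\sum_{k\geq 1}2^{-k}z^{n_k}$; then $\phi$ is a symbol with $\phi(0)=0$ and $\|C_\phi(e_1)\|^2=\|\phi\|^2=\sum_k 4^{-k}\beta_{n_k}=\infty$, so $C_\phi$ cannot be bounded. Both proofs rest on the same elementary observation $C_\phi(e_1)=\phi$, but the paper leverages it once on an explicit lacunary series, whereas you use it for every $\phi\in H_0^\infty$ to get the set-theoretic inclusion $H_0^\infty\subseteq H^2(\beta)$ and then invoke the closed graph theorem to upgrade this to a continuous embedding before testing on monomials. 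The paper's route is slightly more elementary (no appeal to the closed graph theorem), while yours yields the stronger intermediate statement that $H_0^\infty$ (and hence, after adding constants, $H^\infty$) embeds continuously in $H^2(\beta)$; this is in fact the natural reformulation of ``$\beta$ bounded above'' that is used implicitly later in the paper.
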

\begin{proof}
Let $f \in H^\infty$.  Write $f = A \, \phi +  f (0)$ where $A$ is a constant and $\phi$ a symbol vanishing at $0$. We have $\phi = C_\phi (z) \in H^2 (\beta)$, by 
hypothesis. So that $f \in H^2 (\beta)$ and $H^\infty \subseteq H^2 (\beta)$.  It follows (by the closed graph theorem) that there exists a constant $M$ such that
 $\| f \|_{H^2 (\beta)} \leq M \, \| f \|_\infty$ for all $f \in H^\infty$. Testing that with $f (z) = z^n$, we get $\beta_n \leq M^2$. 
\end{proof}

For symbols vanishing at $0$, we have the following characterization. 

\begin{proposition} \label{phi(0)=0}
The following assertions are equivalent:
\smallskip

$1)$ all symbols $\phi$ such that $\phi (0) = 0$ induce bounded composition operators $C_\phi$ on $H^2 (\beta)$ and
\begin{equation} \label{borne uniforme}
\sup_{\phi (0) = 0} \| C_\phi \| < \infty \, ;
\end{equation}

$2)$ $\beta$ is an essentially decreasing sequence.
\end{proposition}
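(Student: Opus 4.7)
For $(2) \Rightarrow (1)$, I would reduce to the Kacnel'son--Goluzin subordination result invoked in the introduction. If $\beta$ is essentially decreasing with constant $C$, the majorant $\tilde\beta_n := \sup_{k \geq n} \beta_k$ is non-increasing and comparable to $\beta$ in the sense $\beta_n \leq \tilde\beta_n \leq C\beta_n$. Kacnel'son's theorem applied to the non-increasing weight $\tilde\beta$ gives a uniform bound (in fact a contractive one) on $\|C_\phi\|_{H^2(\tilde\beta)}$ over all symbols $\phi$ with $\phi(0) = 0$, and the norm comparison transfers this to $\sup_{\phi(0) = 0} \|C_\phi\|_{H^2(\beta)} \leq \sqrt{C}$.

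For $(1) \Rightarrow (2)$, set $M := \sup_{\phi(0) = 0} \|C_\phi\|_{H^2(\beta)}$, finite by hypothesis. The plan is to test this uniform bound against an explicit two-parameter family of symbols. For each pair of integers $m > n \geq 1$, define
\[
\phi_{n,m}(z) = \Big(1 - \frac{1}{n}\Big)\, z + \frac{1}{n}\, z^{m - n + 1},
\]
a convex combination which clearly vanishes at $0$ and, by the triangle inequality, maps $\D$ into itself. Expanding $\phi_{n,m}^n$ by the binomial theorem, the monomial $z^m$ appears with coefficient exactly $(1 - 1/n)^{n-1}$, coming solely from the $k = 1$ cross-term, because for $k \geq 2$ the exponent $n + k(m - n)$ strictly exceeds $m$. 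Therefore
\[
\|C_{\phi_{n,m}}\, e_n\|^2 \geq (1 - 1/n)^{2(n-1)}\, \beta_m \geq e^{-2}\, \beta_m,
\]
and dividing by $\|e_n\|^2 = \beta_n$ yields $\beta_m \leq e^2 M^2 \beta_n$ for all $m \geq n \geq 1$ (the case $m = n$ being trivial). The remaining $n = 0$ case is absorbed by enlarging the constant using $\beta_m \leq e^2 M^2 \beta_1$ and $\beta_0 > 0$.

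The main obstacle is isolating the right test family. The one-parameter choice $\phi(z) = z^k$ only forces $\beta_{kn} \leq M^2 \beta_n$, giving control of $\beta_m / \beta_n$ when $n \mid m$ but nothing else. The family $\phi_{n,m}$ above is engineered so that a single binomial cross-term hits the target frequency $z^m$ regardless of the arithmetic relationship between $m$ and $n$, while the surviving coefficient $(1 - 1/n)^{n-1}$ stays bounded below by $1/e$ thanks to the classical Euler limit. Verifying that no higher-order cross-term also lands on $z^m$, so that this lower bound is clean, is the key algebraic point of the construction.
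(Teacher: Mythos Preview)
Your proof is correct. The implication $(2)\Rightarrow(1)$ matches the paper exactly: reduce to a non-increasing equivalent weight and invoke Goluzin--Rogosinski/Kacnel'son.

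For $(1)\Rightarrow(2)$ you take a genuinely different route. The paper chooses
\[
\phi(z) = z\,\Big(\frac{1+z^{m-n}}{2}\Big)^{1/n},
\]
so that $\phi^n = \tfrac{1}{2}(z^n+z^m)$ \emph{exactly}, and reads off $\tfrac14(\beta_n+\beta_m)=\|\phi^n\|^2\le M^2\beta_n$ in one line. Your polynomial family $\phi_{n,m}(z)=(1-\tfrac1n)z+\tfrac1n z^{m-n+1}$ achieves the same goal via the binomial expansion: the exponent $n+k(m-n)$ hits $m$ only for $k=1$, leaving the coefficient $(1-1/n)^{n-1}\ge 1/e$. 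The paper's trick is slicker computationally (a two-term $\phi^n$, no asymptotic constant to track) but requires a branch of the $n$-th root; your construction stays within polynomials and is arguably more elementary, at the cost of a slightly messier constant $e^2M^2$ and the separate $n=0$ patch. Both extract the same information from the same test vector $e_n$.
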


Of course, by the uniform boundedness principle, \eqref{borne uniforme} is equivalent to:
\begin{displaymath} 
\qquad \qquad \sup_{\phi (0) = 0} \| f \circ \phi \| < \infty \quad \text{for all } f \in H^2 (\beta) \, .
\end{displaymath} 
\begin{proof}
$2) \Rightarrow 1)$ We may assume that $\beta$ is non-increasing. Then the Goluzin-Rogosinski theorem (\cite[Theorem 6.3]{Duren}) gives the result; 
in fact, writing $f (z) = \sum_{n = 0}^\infty c_n z^n$ and $(C_\phi f) (z) = \sum_{n = 0}^\infty d_n z^n$, it says that:
\begin{displaymath} 
\| C_\phi f \|^2 = |d_0|^2 \beta_0 + \sum_{n = 1}^\infty |d_n|^2 \beta_n \leq |c_0|^2 \beta_0 + \sum_{n = 1}^\infty |c_n|^2 \beta_n 
= \| f \|^2 \, ,
\end{displaymath} 
leading to $C_\phi$ bounded and $\| C_\phi \| \leq 1$. Alternatively, we can use a result of Kacnel'son (\cite{Kacnelson}; see also 
\cite{Isabelle}, \cite[Corollary 2.2]{IsabelleII}, or \cite[Theorem 3.12]{LLQR-comparison}). This result was also proved by C.~Cowen 
\cite[Corollary of Theorem~7]{Cowen}.

$1) \Rightarrow 2)$ Set $M = \sup_{\phi (0) = 0} \| C_\phi \|$. Let $m > n$, and take:
\begin{displaymath}
\phi (z) = \phi_{m, n} (z) = z \, \bigg( \frac{1 + z^{m - n}}{2} \bigg)^{1 / n} \, .
\end{displaymath}
Then $\phi (0) = 0$ and  $[\phi (z)]^n = \frac{z^n + z^m}{2}\,$; hence
\begin{displaymath} 
\frac{1}{4} \, (\beta_n + \beta_m)  = \| \phi^n \|^2 = \| C_\phi (e_n) \|^2 \leq \| C_\phi \|^2 \| e_n \|^2 \leq M^2 \, \beta_n \, ,
\end{displaymath} 
so $\beta$ is essentially decreasing.
\end{proof}

Boundedness of $\beta_n$ does not suffice. For example, let $(\beta_n)$ be a sequence such that 
 $\beta_{2 k +2} / \beta_{2 k + 1} \converge_{k \to \infty} \infty$ (for instance $\beta_{2 k} = 1$ and 
$\beta_{2 k + 1} = 1 / (k + 1)$); if $\phi (z) = z^2$, then $\| C_\phi (z^{2 n + 1}) \|^2 = \| z^{2 (2 n + 1)} \|^2 = \beta_{2 (2 n + 1)}$; since 
$\| z^{2 n + 1} \|^2 = \beta_{2 n + 1}$, $C_\phi$ is not bounded on $H^2 (\beta)$. 
\smallskip

A more interesting example is the following. For $0 < r < 1$, let $\beta_n =\pi \,  n \, r^{2 n}$. This sequence is eventually decreasing, so it is essentially 
decreasing. The square $\| f \|^2_{H^2 (\beta)}$ of the norm $\| f \|_{H^2 (\beta)}$ is the area of the part of the Riemann surface on which $r \D$ is mapped 
by $f$. E.~Reich \cite{Reich}, generalizing Goluzin's result \cite{Goluzin} (see \cite[Theorem~6.3]{Duren}), proved that for all symbols $\phi$ such that 
$\phi (0) = 0$, the composition operator $C_\phi$ is bounded on $H^2 (\beta)$ and
\begin{displaymath} 
\| C_\phi \| \leq \sup_{n \geq 1} \sqrt{n} \, r^{n - 1} \leq \frac{1}{\sqrt{2 \, \e}} \, \frac{1}{r \sqrt{\log (1 / r)}} \, \cdot
\end{displaymath} 
For $0 < r < 1 / \sqrt{2}$, Goluzin's theorem asserts that $\|C_\phi \| \leq 1$.
\smallskip

Note that this sequence $\beta$ does not satisfy the $\Delta_2$-condition since $\beta_{2 n} / \beta_n = 2 \, r^{2 n}$,  
Theorem~\ref{theo necessity Delta-deux} below states that no composition operator $C_{T_a}$ is bounded. 
\medskip

However that the weight $\beta$ is essentially decreasing is not necessary for the boundedness of all composition operators $C_\phi$,  
with symbol $\phi$ vanishing at $0$, as we will see later (Theorem~\ref{theo example}). 

\subsection {Sufficient condition for the boundedness of composition operators -- Part I} \label{sec: boundedness}

We now have one of the the main results of this section.

\begin{theorem} \label{main theorem}
Let $H^2 (\beta)$ be a weighted Hardy space with $\beta = (\beta_n)$ essentially decreasing and satisfying the $\Delta_2$-condition. Then all composition 
operators on $H^2 (\beta)$ are bounded. 
\end{theorem}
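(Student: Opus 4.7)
My plan is to factor an arbitrary composition operator as $C_\phi = C_\psi \circ C_{T_a}$, where $a := \phi(0) \in \D$ and $\psi := T_{-a}\circ \phi$ satisfies $\psi(0) = T_{-a}(a) = 0$ and $\phi = T_a \circ \psi$. Since $\beta$ is essentially decreasing, Proposition~\ref{phi(0)=0} delivers the boundedness of $C_\psi$ (this is the only place where the essentially decreasing hypothesis enters). Thus the theorem reduces to showing that $C_{T_a}$ is bounded on $H^2(\beta)$ for every $a \in \D$.

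Because the rotations $R_\theta$ act as isometries on $H^2(\beta)$ and $T_a = R_{\arg a} \circ T_{|a|} \circ R_{-\arg a}$, it suffices to handle $a \in (0,1)$.

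The heart of the argument is the boundedness of $C_{T_a}$ for real $a \in (0,1)$, which is where the $\Delta_2$-condition is indispensable. Lacking an integral representation for $\|\cdot\|_{H^2(\beta)}$, I would work ``with bare hands'' directly on the matrix of $C_{T_a}$ in the canonical basis: writing $T_a(z)^n = \sum_m p_{n,m}(a)\,z^m$, the coefficients $p_{n,m}(a)$ admit an explicit Jacobi/binomial description, are $\ell^2$-concentrated around $m\sim n$ with geometric decay at both extremes (in particular $p_{n,0}(a)=a^n$), and satisfy the global identity $\sum_m |p_{n,m}(a)|^2 = \|T_a^n\|_{H^2}^2 = 1$. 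On the other side, the two hypotheses give two-sided dyadic control of the weight: $\beta_m \leq C\beta_n$ for $m \geq n$ from essentially decreasing, and $\beta_m \leq C_1 (n/m)^\alpha \beta_n$ for $1 \leq m \leq n$ obtained by iterating $\Delta_2$, with $\alpha = \log_2(1/\delta)$. Combining these, I would first prove a monomial-wise estimate $\sum_m \beta_m |p_{n,m}(a)|^2 \leq K(a)\beta_n$, and then upgrade it to $\|C_{T_a} f\|_{H^2(\beta)} \leq K'(a)\|f\|_{H^2(\beta)}$ for arbitrary $f \in H^2(\beta)$ by a dyadic-block/Littlewood-type argument or, equivalently, a Schur test on the weighted matrix $\sqrt{\beta_m/\beta_n}\,p_{n,m}(a)$.

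The main obstacle is precisely this final quantitative step: essentially decreasing alone absorbs the high-frequency range $m \geq n$, but on the low-frequency range $m < n$ the ratio $\beta_m/\beta_n$ may blow up polynomially, and it is exactly the $\Delta_2$-condition that keeps this blow-up slow enough to be defeated by the geometric concentration of $p_{n,m}(a)$ off the diagonal. The two hypotheses play complementary roles on the two complementary ranges and neither can be dropped.
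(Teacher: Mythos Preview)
Your reduction to $C_{T_a}$ with $a\in(0,1)$ is correct and matches the paper verbatim, as does the monomial-wise estimate $\sum_m \beta_m |p_{n,m}(a)|^2 \le K(a)\,\beta_n$: the split into $m\ge n$ (handled by essentially decreasing) and $m<n$ (polynomial blow-up of $\beta_m/\beta_n$ from $\Delta_2$ beaten by the exponential decay of Lemma~\ref{majo coeff}) is exactly right.

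The genuine gap is your ``upgrade'' step. Bounded $\ell^2$-norms of the columns of $A_\beta=(\sqrt{\beta_m/\beta_n}\,p_{n,m})$ do \emph{not} give $\|A_\beta\|<\infty$. Your two proposed bridges are both unsubstantiated. A Schur test needs weighted $\ell^1$ bounds on rows \emph{and} columns; with constant weights the column sum $\sum_m|p_{n,m}|$ is of order $\sqrt n$ (this is the Wiener norm of $T_a^n$, and follows from the stationary-phase estimates of Lemma~\ref{main lemma}), so the test fails, and you give no alternative weight. A dyadic-block argument needs almost-orthogonality of the images of the blocks under $C_{T_a}$, which you do not establish. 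Incidentally, your remark that essentially decreasing ``only'' enters via Proposition~\ref{phi(0)=0} is also where the gap shows: at the operator level, essentially decreasing must do real work on the range $m\ge n$, and you have not said how.

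The paper supplies precisely the missing tool: Kacnel'son's theorem (Theorem~\ref{theo Kacnelson}), which says that conjugating a bounded \emph{lower-triangular} operator by a non-decreasing diagonal does not increase the norm. The argument splits $A$ along geometric blocks $I_k=[N^k,N^{k+1})$ into four pieces: two near-diagonal blocks $D,R$ whose weighted versions $D_\beta,R_\beta$ are controlled because $\Delta_2$ keeps $\beta_m/\beta_n$ bounded within and between adjacent blocks; a far upper-triangular piece $U$ (your range $m\ll n$) whose weighted version is Hilbert--Schmidt by Lemma~\ref{majo coeff} plus the polynomial minoration; and the remaining lower-triangular piece $S$, bounded on $H^2$ since $A,D,R,U$ are, whence $S_\beta$ is bounded by Kacnel'son with $\gamma_n=\beta_n^{-1/2}$. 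This last step is the substantive input your outline lacks.
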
 

For the proof, we need a lemma. 
\goodbreak

\begin{lemma} \label{majo coeff}
For $0 < a < 1$, we write:
\begin{equation} \label{T_a power n}
(T_a z)^n = \sum_{m = 0}^\infty a_{m, n} z^m \, .
\end{equation} 
Then, there are constants $b>0,\  0<C_1<1,\  C_2>1$ such that 
\begin{displaymath}
| a_{m, n} | \leq \left\{ 
\begin{array} {lcl}
\e^{- b n} & \text{if} & m \leq C_1 n \, , \\
\e^{- b m} & \text{if} & m \geq C_2 n \, . 
\end{array}
\right.
\end{displaymath}
\end{lemma}

\begin{proof}
First take $0 < r < 1$; let:
\begin{displaymath} 
M (r) = \sup_{|z| = r}  |T_a (z)| = \sup_{|z| = r} \bigg|\frac{z + a}{1 + a z} \bigg|\, \cdot 
\end{displaymath} 
We have $M (r) < 1$, so we can write $M (r) = r^{\rho}$, for some positive $\rho = \rho (a)$. 

The Cauchy inequalities give:
\begin{displaymath} 
|a_{m, n}| \leq \frac{[M (r)]^n}{r^m} = r^{\rho n - m} \, ,
\end{displaymath} 
and we obtain the first inequality  by taking $r = \e^{- \alpha}$ and adjusting $C_1$. 

Next, we use that $T_a$ is analytic on $D(0,1/a)$. Fix $1 < r =: \e^{\beta} < 1 / a$, with $\beta > 0$.  Let $M (r) = \e^{\alpha}$ with $\alpha > 0$. 
The Cauchy inequalities again give:
\begin{displaymath} 
|a_{m, n}| \leq \frac{[M (r)]^n}{r^m} = \e^{\alpha n - \beta m} \, ,
\end{displaymath}
and we obtain the second inequality by adjusting $C_2$. 
\end{proof}

We will also need the following result of V.~\`E~Kacnel'son (\cite{Kacnelson}; see also \cite{Isabelle}, \cite[Corollary 2.2]{IsabelleII}, 
or \cite[Theorem 3.12]{LLQR-comparison}).
\begin{theorem} [V.~\`E~Kacnel'son] \label{theo Kacnelson} 
Let $H$ be a separable complex Hilbert space and $(e_i)_{i \geq 0}$ a fixed orthonormal basis of $H$. 
Let $M \colon H \to H$ be a bounded linear operator. We assume that the matrix of $M$ with respect to this basis is lower-triangular: 
$\langle M e_j \mid e_i \rangle = 0$ for $i < j$.

Let $(\gamma_j)_{j \geq 0}$ be a non-decreasing sequence of positive real numbers and $\Gamma$ the (possibly unbounded) diagonal operator such that 
$\Gamma (e_j) = \gamma_j e_j$, $j \geq 0$. Then the operator $\Gamma^{ - 1} M \, \Gamma \colon H \to H$ is bounded and moreover:
\begin{displaymath} 
\| \Gamma^{- 1} M \, \Gamma \| \leq \| M \| \, .
\end{displaymath}
\end{theorem}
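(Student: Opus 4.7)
The plan is to reduce the problem to a $2$-block conjugation via iteration. Observe first that since $M$ is lower triangular, the matrix of $\Gamma^{-1}M\Gamma$ is obtained from that of $M$ by multiplying each entry $M_{ij}$ by $\gamma_j/\gamma_i$, a factor that is at most $1$ for the only potentially non-zero entries (those with $i\geq j$). This makes the inequality $\|\Gamma^{-1}M\Gamma\|\leq\|M\|$ plausible, but since the operator norm is not monotone with respect to the modulus of matrix entries, a genuine argument is needed.

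The engine of the proof is the case where $\Gamma$ takes only two distinct values. Decompose $H=H_1\oplus H_2$ with $H_1=\mathrm{span}\{e_j : j\in I_0\}$ and $H_2=H_1^\perp$, and write $M=\bigl(\begin{smallmatrix} A & 0\\ B & C\end{smallmatrix}\bigr)$ in this decomposition (the $(1,2)$-block is zero because $M$ is lower triangular). If $\Gamma=a\,I_{H_1}\oplus b\,I_{H_2}$ with $b\geq a>0$ and $\lambda:=a/b\in(0,1]$, then $\Gamma^{-1}M\Gamma=\bigl(\begin{smallmatrix} A & 0\\ \lambda B & C\end{smallmatrix}\bigr)$, and one writes
\[
\begin{pmatrix} A & 0\\ \lambda B & C\end{pmatrix}=(1-\lambda)\begin{pmatrix} A & 0\\ 0 & C\end{pmatrix}+\lambda M .
\]
Since $A$ and $C$ are compressions of $M$, $\max(\|A\|,\|C\|)\leq\|M\|$, so the block-diagonal operator has norm at most $\|M\|$; the triangle inequality then yields $\|\Gamma^{-1}M\Gamma\|\leq(1-\lambda)\|M\|+\lambda\|M\|=\|M\|$.

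Next I handle the case where $\gamma$ takes only finitely many distinct values $\gamma_{(0)}<\gamma_{(1)}<\cdots<\gamma_{(K)}$, by induction on $K$. Given such a $\Gamma$, factor $\Gamma=\Gamma_1\Gamma_2$ where $\Gamma_1$ is the two-valued diagonal operator equal to $\gamma_{(0)}$ on the first level set and to $\gamma_{(1)}$ everywhere else, and $\Gamma_2:=\Gamma_1^{-1}\Gamma$ is a non-decreasing diagonal operator whose distinct values are $\{1,\gamma_{(2)}/\gamma_{(1)},\ldots,\gamma_{(K)}/\gamma_{(1)}\}$ (one level fewer than $\Gamma$, since the first two level sets of $\Gamma$ merge at the value $1$). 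The $2$-block step applied to $\Gamma_1$ gives $\|M'\|\leq\|M\|$ for $M':=\Gamma_1^{-1}M\Gamma_1$, and $M'$ remains lower triangular because conjugation by a diagonal operator preserves the pattern of zeros. Then $\Gamma^{-1}M\Gamma=\Gamma_2^{-1}M'\Gamma_2$, and the inductive hypothesis yields $\|\Gamma^{-1}M\Gamma\|\leq\|M'\|\leq\|M\|$.

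Finally, for a general non-decreasing $\gamma$ (possibly taking infinitely many values, and with $\gamma_j$ possibly unbounded, though $\gamma_j\geq\gamma_0>0$ keeps $\Gamma^{-1}$ bounded), I argue by truncation. Define $\Gamma^{(N)}$ by $(\Gamma^{(N)})_{jj}=\gamma_j$ for $j\leq N$ and $(\Gamma^{(N)})_{jj}=\gamma_N$ for $j>N$; this takes at most $N+1$ distinct values, so the previous step gives $\|(\Gamma^{(N)})^{-1}M\Gamma^{(N)}\|\leq\|M\|$. For any $x$ supported in $\{0,\ldots,N\}$, one has $\Gamma x=\Gamma^{(N)}x$, hence $y:=M\Gamma x=M\Gamma^{(N)}x$, and since $\gamma_i\geq\gamma_N$ for $i>N$ the coordinatewise inequality $|(\Gamma^{-1}y)_i|\leq|((\Gamma^{(N)})^{-1}y)_i|$ gives $\|\Gamma^{-1}M\Gamma x\|\leq\|(\Gamma^{(N)})^{-1}M\Gamma^{(N)}x\|\leq\|M\|\,\|x\|$. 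Density of finitely supported vectors then extends $\Gamma^{-1}M\Gamma$ to a bounded operator on $H$ with norm at most $\|M\|$. The main obstacle is the $2$-block convexity step: its success relies crucially on the vanishing of the $(1,2)$-block of $M$, which ensures that the $\lambda=0$ endpoint of the convex combination is a block-diagonal operator (hence automatically dominated in norm by $M$) rather than a matrix whose norm could exceed $\|M\|$.
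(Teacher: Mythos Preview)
Your proof is correct. The two-block convexity step, the induction on the number of distinct values of $\gamma$, and the truncation argument all go through as written; the only point one might want to make explicit is that in the two-block step the level set $\{j:\gamma_j=\gamma_{(0)}\}$ is an \emph{initial segment} (this is where the non-decreasing hypothesis on $\gamma$ is used), which is precisely what makes the $(1,2)$-block of $M$ vanish.

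Your route is genuinely different from the paper's. The paper (in its proof of Lemma~\ref{lemma 2}, which it says has the same proof as Theorem~\ref{theo Kacnelson}) uses a complex-interpolation argument: on finite truncations $A_N=P_NMP_N$ one considers the analytic family $z\mapsto D^{-z}A_N D^z$ on the closed right half-plane, observes that its norm is subharmonic and bounded, applies the maximum principle, and notes that on the imaginary axis $\|D^{it}\|=\|D^{-it}\|=1$, so the boundary norm is $\le\|A_N\|\le\|M\|$; evaluating at $z=1$ and letting $N\to\infty$ finishes. Your argument is entirely real and elementary: it replaces complex interpolation by an explicit convex combination in the two-value case and builds the general case by factoring $\Gamma$. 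The payoff of the paper's approach is uniformity---it treats all $\Gamma$ at once with a single analytic estimate and adapts immediately to the slightly more general hypothesis~\eqref{cond trou} of Lemma~\ref{lemma 2}, where the level sets need not be initial segments. The payoff of your approach is that it avoids any complex analysis and exposes the mechanism (vanishing of the upper block) in the simplest possible setting; note however that your induction does rely on the level sets being nested initial segments, so it proves Theorem~\ref{theo Kacnelson} as stated but would need modification to cover Lemma~\ref{lemma 2}.
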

\begin{proof} [Proof of Theorem~\ref{main theorem}]
We may, and do, assume that $\beta$ is non-increasing. 
\smallskip

Proposition~\ref{phi(0)=0} gives the result when $\phi (0) = 0$.
\smallskip

It remains to show that all $C_{T_a}$, $a \in \D$, are bounded. Indeed, if $a = \phi (0)$ and $\psi = T_{- a} \circ \phi$, then $\psi (0) = 0$ and 
$\phi = T_a \circ \psi$, so $C_\phi = C_\psi \circ C_{T_a}$. Moreover, we have only to show that when $a \in [0, 1)$. Indeed, if $a \in \D$ and 
$a = |a|\, \e^{i \theta}$, we have $T_a = R_\theta \circ T_{|a|} \circ R_{- \theta}$, so 
$C_{T_a} = C_{R_{- \theta}} \circ C_{T_{|a|}} \circ C_{R_\theta}$. 
\smallskip

We consider the matrices 
\begin{displaymath} 
A = (a_{m, n} )_{m, n \geq 0} \quad \text{and} \quad A_\beta = \bigg( \sqrt{\frac{\beta_m}{\beta_n}} \, a_{m, n} \bigg)_{m, n \geq 0} \, .
\end{displaymath} 

Since $C_{T_a} e_n = {T_a}^n$, the formula \eqref{T_a power n} shows that $A$ is the matrix of $C_{T_a}$ in $H^2$ with respect to the basis 
$(e_n)_{n \geq 0}$.  On the other hand, $A_\beta$ is the matrix of $C_{T_a}$ in $H^2 (\beta)$ with respect to the basis $(e_n^\beta)_{n \geq 0}$. 
We note that $A_\beta = B A B^{- 1}$, where $B$ is the diagonal matrix with values $\sqrt{\beta_0}, \sqrt{\beta_1}, \ldots$ on the diagonal. 

Since $C_{T_a}$ is a bounded composition operators on $H^2$, the matrix $A$ defines a bounded operator on $\ell_2$. We have to show that 
$A_\beta$ also, i.e. $\| A_\beta \| < \infty$. 
\smallskip

For that purpose, we split $A$ and $A_\beta$ into several sub-matrices. 
\smallskip

Let $N$ be an integer such that $N \geq 2 / C_1$, where $C_1$ is defined in Lemma~\ref{majo coeff} (actually, the proof of that lemma shows that we can 
take $C_1$ such that $1 / C_1$ is an integer, so we could take $N = 2 / C_1$). 
Let $I_0 = [0 , N[\,$ $J_0 = [N, + \infty[\,$ and for $k = 1, 2, \ldots\,$:
\begin{displaymath} 
I_k = [N^k, N^{k  + 1}[\, \quad \text{and} \quad J_k = [N^{k + 1}, + \infty[\,. 
\end{displaymath} 
We define the matrices $D_\beta$ and $R_\beta$, whose entries are respectively:
\begin{align*} 
d_{m, n} & = \left\{ 
\begin{array}{lll}
\dis \sqrt{\frac{\beta_m}{\beta_n}} \, a_{m, n} & \text{if} & \dis (m, n) \in \bigcup_{k = 0}^\infty (I_k \times I_k) \\
\quad 0 & \text{elsewhere;}
\end{array}
\right.
\end{align*}
and
\begin{align*} 
\phantom{+ 1} r_{m, n} & =  \left\{ 
\begin{array}{lll}
\dis \sqrt{\frac{\beta_m}{\beta_n}} \, a_{m, n} & \text{if} & \dis (m, n) \in \bigcup_{k = 0}^\infty (I_k \times I_{k + 1}) \\
\quad 0 & \text{elsewhere.}
\end{array}
\right.
\end{align*} 
We also define the matrix $S_\beta$ with entries:
\begin{displaymath} 
s_{m, n} =  \left\{ 
\begin{array}{lll}
\dis \sqrt{\frac{\beta_m}{\beta_n}} \, a_{m, n} & \text{if} & \dis (m, n) \in \bigcup_{k = 0}^\infty ( J_k \times I_k ) \\
\quad 0 & \text{elsewhere.}
\end{array}
\right.
\end{displaymath} 

Matrices $D$, $R$, and $S$ are constructed in the same way from $A$ and we set $U = A - (D + R + S)$.
\smallskip

Now, let $H_k$ be the subspace of the sequences $(x_n)_{n \geq 0}$ in $\ell_2$ such that $x_n = 0$ for $n \notin I_k$, i.e. 
$H_k = {\rm span}\, \{e_n \tq n \in I_k \}$, and let $P_k$  be (the matrix of) the orthogonal projection of $\ell_2$ with range $H_k$. We have:
\begin{displaymath} 
D = \sum_{k = 0}^\infty P_k A P_k \quad \text{and} \quad R = \sum_{k = 0}^\infty P_k A P_{k + 1} \, ,
\end{displaymath} 
where $D_k = P_k A P_k$ is the matrix with entries $a_{m, n}$ when $(m, n) \in I_k \times I_k$ and $0$ elsewhere, and $R_k = P_k A P_{k + 1}$ the matrix 
with entries $a_{m, n}$ when $(m, n) \in I_k \times I_{k + 1}$ and $0$ elsewhere.

\begin{displaymath} 
\left(
\begin{array} {c@{}|@{}c@{}|@{}c@{}c@{}c@{}c@{}c@{}c}
\begin{array}{|c}
\hline
D_0 \\
\hline
\end{array}
&
\begin{array}{ccc}
\hline
\phantom{R} & R_0 & \phantom{R} \\
\hline
\end{array}
& \phantom{U} & \phantom{U} & \phantom{U} &  \phantom{U} &  \phantom{U} &  \phantom{U} \\
{} & 
\begin{array}{ccc}
\phantom{D} & \phantom{D_1} & \phantom{D} \\
\phantom{D} & D_1 & \phantom{D} \\
\phantom{D} & \phantom{D_1} & \phantom{D} \\
\hline
\end{array}
& 
\begin{array}{ccccc|}
\hline
\phantom{R} & \phantom{R} & \phantom{R_1} & \phantom{R}  & \phantom{R} \\
\phantom{R} & \phantom{R} & R_1 & \phantom{R}  & \phantom{R} \\
\phantom{R} & \phantom{R} & \phantom{R_1} & \phantom{R}  & \phantom{R} \\
\hline
\end{array}
& \phantom{U} & \phantom{U} & \phantom{U}  & \qquad \quad \text{\huge{$U$}} & \\
S_0 & S_1 & 
\begin{array}{ccccc}
\phantom{D} & \phantom{D} & \phantom{D_2} & \phantom{D} & \phantom{D} \\
\phantom{D} & \phantom{D} & \phantom{D_2} & \phantom{D} & \phantom{D} \\
\phantom{D} & \phantom{D} & D_2 & \phantom{D} & \phantom{D} \\
\phantom{D} & \phantom{D} & \phantom{D_2} & \phantom{D} & \phantom{D} \\
\phantom{D} & \phantom{D} & \phantom{D_2} & \phantom{D} & \phantom{D} \\
\hline
\end{array}
&  
\begin{array}{|ccc}
\hline
\phantom{R}  & \phantom{R_2} & \phantom{R} \\
\phantom{R}  & \phantom{R_2} & \phantom{R} \\
\phantom{R} & R_2 & \phantom{R} \\
\phantom{R} & \phantom{R_2} & \phantom{R} \\
\phantom{R}  & \phantom{R_2} & \phantom{R} \\
\hline
\end{array}
& & & \\
& & &
\begin{array}{|ccc} 
 \phantom{R} & \phantom{R_2} & \phantom{R} 
\end{array} 
& & & \\
{} &  {} & S_2 &
\begin{array}{|ccc} 
 \phantom{R} & \phantom{R_2} & \phantom{R} 
\end{array} 
&  &  &  \\
\end{array}
\right)
\end{displaymath} 

\hskip - 1 pt Since the subspaces $H_k$ are orthogonal, the matrices $D$ and $R$ induce bounded operators on $\ell_2$, and
\begin{equation} \label{DR}
\qquad \| D \| \leq \| A \| \, , \quad \| R \| \leq \| A \|  \, . 
\end{equation} 

Now, for $k \geq 1$, let $B_k$ be the diagonal matrix whose entries are $b_{m, m} = \sqrt{\beta_m}$ if $m \in I_k$ and $b_{m, n} = 0$ otherwise. 

Then $P_k D_\beta P_k = P_k B_k D B_k^{- 1} P_k $, so 
\begin{displaymath} 
\| P_k D_\beta P_k \| \leq \| B_k \| \, \| B_k^{- 1} \| \, \| D \| 
\leq \max _{j \in I_k} \sqrt{\beta_j} \, \max_{j \in I_k} \frac{1}{\sqrt{\beta_j}} \,  \| A\| \, .
\end{displaymath} 
But the weight $\beta$ satisfies the $\Delta_2$-condition: $\beta_{2 l} \geq \delta_0 \, \beta_l$, and it follows that for every $l \geq 1$:
\begin{displaymath}
\beta_{N^{2} l} \geq \delta^2 \, \beta_l \, ,
\end{displaymath}
for some other constant $\delta$, chosen small enough to have $\| P_0 D_\beta P_0 \| \leq \delta^{- 1} \| A \|$. 
Since $\beta$ is non-increasing, we have  $\beta_j \geq  \delta^2 \, \beta_{N^k}$ for $N^k \leq j \leq N^{k + 1}$. 
In particular $\max _{j \in I_k} \sqrt{\beta_j} \leq \delta^{- 1} \min_{j \in I_k} \sqrt{\beta_j}$ and 
\begin{displaymath} 
\| P_k D_\beta P_k \| \leq \delta^{- 1} \| A \| \, .
\end{displaymath} 
Hence, by orthogonality of the subspaces $H_k$:
\begin{equation} \label{D_beta}
\| D _\beta \| \leq \delta^{- 1} \| D \| \, .  
\end{equation} 

In the same way, we have $P_k R_\beta P_k = P_k B_k D B_{k + 1}^{- 1} P_k$, so:
\begin{displaymath} 
\| P_k R_\beta P_k \| \leq 
\max _{j \in I_k} \sqrt{\beta_j} \, \max_{j \in I_{k + 1}} \frac{1}{\sqrt{\beta_j}} \,  \| A\| \, 
= \max_{(m, n) \in I_k \times I_{k + 1}} \sqrt{\frac{\beta_m}{\beta_n}} \,  \| A\| \, .
\end{displaymath} 
But, when $(m, n) \in I_k \times I_{k + 1}$, we get 
\begin{displaymath}
\beta_n \geq \beta_{N^{k + 2}} \geq \delta^2 \beta_{N^{k}} \geq \delta^2 \beta_m \, .
\end{displaymath}
Hence 
\begin{equation} \label{R_beta}
\| R_\beta \| \leq \delta^{- 1} \| R \|   \, .
\end{equation} 

Next, consider $U = A - (D + R + S)$; we can compute its Hilbert-Schmidt norm using Lemma~\ref{majo coeff}. 
Note that $u_{m, n} \neq 0$ only (if it happens) when $m \in I_k$ for some $k \geq 0$ and 
$n \geq \min I_{k + 2} \geq N^{k + 2} > N m$, since $m \in I_k$, so only when $m \leq C_1 \, n$. We have, since then $u_{m, n} = a_{m, n}$:
\begin{displaymath} 
\| U \|_{HS}^2 \leq \sum_{n = 0}^\infty \sum_{m\leq C_1 \, n } |a_{m, n} |^2 
\leq \sum_{n = 0}^\infty \sum_{m \leq C_1 n} \e^{- 2 b  n}
\leq \sum_{n = 0}^\infty C_1 n \, \e^{- 2b n} < \infty \, .
\end{displaymath} 

Consequently, with \eqref{DR}, we have $\| S \| \leq \| A \| + \| D \| + \| R \| + \| U \| < \infty$. 
\smallskip

Now, since $S$ is a lower-triangular matrix and $\beta$ is non-increasing, we can use the result of V.~\`E~Kacnel'son (Theorem~\ref{theo Kacnelson}), 
with $\gamma_j = 1 / \sqrt{\beta_j}$. We get that $S_\beta$ defines a bounded operator and $\| S_\beta \| \leq \| S \|$. 
\smallskip

Further, Proposition~\ref{simple proposition polynomial minoration} says that $\beta$ has a polynomial minoration: 
\begin{displaymath} 
\beta_n \geq c \, n^{- \sigma} 
\end{displaymath} 
for positive constants $c$ and $\sigma$. Then, if $U_\beta = A_\beta - (D_\beta + R_\beta + S_\beta)$, we have:
\begin{displaymath} 
\| U_\beta \|_{HS}^2 \leq \sum_{n = 0}^\infty \sum_{m < \rho n / 2} \frac{|a_{m, n}|^2}{\beta_n} 
\leq \sum_{n = 0}^\infty \frac{\rho n}{2} \, \e^{- \alpha \rho n} \frac{\, n^\sigma}{c} < \infty \, .
\end{displaymath} 

Putting this together with \eqref{D_beta} and \eqref{R_beta}, we finally obtain that $A_\beta = S_\beta + D_\beta + R_\beta + U_\beta$ is 
the matrix of a bounded operator, and that ends the proof of Theorem~\ref{main theorem}.
\end{proof}
\smallskip

\noindent {\bf Remark.} We could have done here without the theorem of Kacnel'son, using Lemma~\ref{majo coeff} to show that the matrix 
$\Big(a_{m, n} \sqrt{\frac{\beta_m}{\beta_n}}\Big)$ is Hilbert-Schmidt as well ``far below'' the main diagonal. Indeed, we see from this lemma that
\begin{displaymath} 
\sum_{m \geq C_{2} n} \!\! |a_{m, n}|^2 \frac{\beta_m}{\beta_n}
 \lesssim \sum_{m \geq C_{2} n} \!\! n^{\sigma} |a_{m,n}|^2 
\lesssim \sum_{m \geq C_{2} n} \!\! n^{\sigma} \e^{- 2 b m} \lesssim \sum_{n \geq 0} n^{\sigma} \e^{- 2 b C_{2} n} < + \infty \, .
\end{displaymath}
See also Theorem~\ref{main bis} of the final section. Kacnel'son's theorem will really be needed in the forthcoming Theorem~\ref{theo cond suff}. 

\goodbreak

\subsection {Sufficient condition for the boundedness of composition operators -- Part II} \label{sec: theo cond suff} 

In this section, we give a sufficient condition for the boundedness of composition operators with symbol vanishing at $0$, of a different nature than 
the one given in Proposition~\ref{phi(0)=0}.

\begin{theorem} \label{theo cond suff}
Let $\beta = (\beta_n)_{n = 0}^\infty$ be a bounded sequence of positive numbers with a polynomial minoration. Assume that:
\begin{equation} \label{cond Luis}
\begin{split}
\text{For every } \delta > 0, & \text{ there exists } \text{a positive constant } C = C (\delta) \text{ such that } \\
& \hskip - 0.8 em \beta_m  \leq C \, \beta_n \quad \text{whenever } m > (1 + \delta) \, n \, .
\end{split}
\end{equation}

Then,  for all symbols $\phi \colon \D \to \D$ vanishing at $0$, the composition operator $C_\phi$ is bounded on $H^2 (\beta)$.
\end{theorem}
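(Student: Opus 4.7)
Since $\phi(0) = 0$, Littlewood's subordination principle gives that $C_\phi$ is a contraction on $H^2$: if $A=(a_{m,n})$ denotes its matrix in the monomial basis, then $A$ is lower triangular ($a_{m,n}=0$ for $m<n$) with $\|A\|_{\mathrm{op}} \leq 1$. The matrix of $C_\phi$ on $H^2(\beta)$ in the basis $(e_n^\beta)$ is $A_\beta = \bigl(\sqrt{\beta_m/\beta_n}\, a_{m,n}\bigr)$, and the goal is to show that $\|A_\beta\|_{\mathrm{op}}<\infty$. The plan is to split $A_\beta=F_\beta+N_\beta$ at scale $1+\delta$: $F_\beta$ retains the entries with $m>(1+\delta)n$ and $N_\beta$ retains those with $n\leq m\leq (1+\delta)n$.

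For the far part $F_\beta$, hypothesis~\eqref{cond Luis} immediately yields $\sqrt{\beta_m/\beta_n}\leq \sqrt{C(\delta)}$ on its support. To turn this pointwise control into an operator-norm bound, I would mirror the block decomposition used in Theorem~\ref{main theorem}: partition $\N$ into blocks $I_k=[N^k,N^{k+1})$ for an integer $N\geq 1+\delta$, so that whenever $j\geq k+2$, every pair $(m,n)\in I_j\times I_k$ automatically satisfies $m>(1+\delta)n$. Applying Kacnel'son's theorem (Theorem~\ref{theo Kacnelson}) to the resulting block-far, lower-triangular matrix then yields $\|F_\beta\|\leq K\sqrt{C(\delta)}\,\|A\|$ for some absolute constant $K$.

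For the near part $N_\beta$, hypothesis~\eqref{cond Luis} provides no direct control on $\sqrt{\beta_m/\beta_n}$, but the boundedness $\beta_m\leq M$ together with the polynomial minoration $\beta_n\geq c\,n^{-\sigma}$ give the \emph{a priori} estimate $\sqrt{\beta_m/\beta_n}\leq \sqrt{M/c}\,n^{\sigma/2}$ throughout the near band. The band has only logarithmic width $\log(1+\delta)$, and both rows and columns of $A$ have $\ell^2$-norm at most $1$ (since $\|A\|_{\mathrm{op}}\leq 1$). I would apply Kacnel'son again, inside each block of a fine-enough partition, with the non-decreasing auxiliary weight $\gamma_n=n^{\sigma/2}$ chosen to absorb the polynomial blow-up of the weight ratio; a Hilbert--Schmidt estimate for the residual diagonals would then yield $\|N_\beta\|<\infty$, and the triangle inequality concludes $\|A_\beta\|\leq \|F_\beta\|+\|N_\beta\|<\infty$.

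The main obstacle is the near-band estimate for $N_\beta$: one must balance the polynomial growth of $\sqrt{\beta_m/\beta_n}$ inside the band against only the contractivity $\|A\|\leq 1$, since no finer information on the coefficients $a_{m,n}$ is available for a generic symbol $\phi$. This is precisely where the quantitative assumptions of boundedness and polynomial minoration on $\beta$ enter, in addition to~\eqref{cond Luis}, and where the most careful application of Kacnel'son's theorem is needed. The choice of $\delta$ (and of the underlying partition) must be made in terms of the polynomial minoration exponent $\sigma$, to ensure that the polynomial factor coming from the near band is summable against the $\ell^2$-contractivity of $A$.
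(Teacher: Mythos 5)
Your split of $A_\beta$ at the scale $m=(1+\delta)n$ is indeed the paper's decomposition, but neither half of your argument goes through, and the two ideas that actually make the proof work are missing. For the near band $n\le m\le(1+\delta)n$, your assertion that ``no finer information on the coefficients $a_{m,n}$ is available for a generic symbol'' is exactly wrong, and without finer information this band cannot be controlled: knowing only $\|A\|\le 1$ while the Schur multiplier $\sqrt{\beta_m/\beta_n}$ may grow like $n^{\sigma/2}$ on the band gives no bound at all (Kacnel'son's theorem only transfers the norm of a lower-triangular matrix under conjugation by a \emph{monotone} diagonal; it does not let you absorb a polynomially growing entrywise factor, and the auxiliary weight $\gamma_n=n^{\sigma/2}$ you propose conjugates by $n^{\sigma/2}(m/n)^{\sigma/2}$-type ratios, not by the factor you need to kill). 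The paper's key input here is Lemma~\ref{lemma 1}: writing $\phi(z)=z\,\phi_1(z)$ with $\phi_1\colon\D\to\D$ (after disposing of the trivial case $|\phi'(0)|=1$), the Cauchy inequalities give $|\hat{\phi^n}(m)|\le\exp\big(-\tfrac12[(1+\rho)n-m]\big)$ for some $\rho=\rho(\phi)>0$; choosing $\delta=\rho/2$ (so $\delta$ is dictated by the symbol, not by the minoration exponent $\sigma$), the entries in the band are exponentially small in $n$, and the band becomes Hilbert--Schmidt even after multiplication by the weight ratio, which is where boundedness and polynomial minoration of $\beta$ enter.

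For the far part $m>(1+\delta)n$, the pointwise bound $\sqrt{\beta_m/\beta_n}\le\sqrt{C(\delta)}$ does not yield an operator bound: entrywise-bounded Schur multipliers are not bounded Schur multipliers, and restricting to the region $m>(1+\delta)n$ destroys the rank-one structure $\sqrt{\beta_m}\cdot\beta_n^{-1/2}$ that would otherwise save you. Theorem~\ref{theo Kacnelson} is not applicable as you invoke it, because it requires the diagonal weight to be monotone, i.e.\ essentially $\beta$ non-increasing --- precisely what is not assumed (and the interesting examples are not). Your block partition $I_k=[N^k,N^{k+1})$ does not repair this: the far region meets infinitely many block diagonals, so no orthogonality or banding argument controls the sum of the block pieces, and within each block pair you would still need a Schur-multiplier bound you have not established. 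The paper's fix is to replace $\beta$ by $\gamma_n=\max\{\beta_n,\,\sup_{m>(1+\delta)n}\beta_m\}$, which by \eqref{cond Luis} satisfies $\beta_n\le\gamma_n\le C\beta_n$ (so $H^2(\gamma)=H^2(\beta)$ with equivalent norms) and, crucially, $\gamma_m\le\gamma_n$ whenever $m\ge(1+\delta)n$; then the variant Lemma~\ref{lemma 2} of Kacnel'son's theorem, whose hypothesis is only ``$d_m<d_n\Rightarrow a_{m,n}=0$'' and whose proof is the same maximum-principle argument, applies directly to the far matrix and gives $\|B_2\|\le\|A_2\|$. Without the modified weight $\gamma$ and without the exponential in-band decay of Lemma~\ref{lemma 1}, your outline identifies the right partition of the matrix but proves neither piece.
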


To prove Theorem~\ref{theo cond suff}, we need several lemmas.

\begin{lemma} \label{lemma 1}
Let $\phi \colon \D \to \D$ be an analytic self-map such that $\phi (0) = 0$ and $| \phi ' (0) | < 1$. Then there exists $\rho > 0$ such that
\begin{displaymath}
|\hat{\phi^n} (m) | \leq \exp \Big( - \frac{1}{2} \, [(1 + \rho) \, n - m] \Big) \, .
\end{displaymath}
\end{lemma}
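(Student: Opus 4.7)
The plan is to mimic Lemma \ref{majo coeff} in the present situation: control the Taylor coefficients of $\phi^n$ by Cauchy's inequalities on a well chosen circle $|z|=r$, after first bounding $M(r):=\sup_{|z|=r}|\phi(z)|$ with the help of the Schwarz lemma. The assumption $|\phi'(0)|<1$ will replace the fact, used in Lemma \ref{majo coeff}, that $T_a$ maps $\D$ strictly into itself.

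Concretely, I would first observe that since $\phi(0)=0$, the Schwarz lemma gives $|\phi(z)| \leq |z|$ on $\D$, with the equality case forcing $\phi$ to be a rotation; the latter is excluded by $|\phi'(0)|<1$. Consequently $|\phi(z)|<|z|$ for every $z\neq 0$, and by compactness
\[
M(r) < r \qquad \text{for every } r \in (0,1).
\]
I would then fix once and for all the radius $r := \e^{-1/2}$. Because $0 < M(r) < r < 1$, one may define
\[
\rho := \frac{\log \bigl(r / M(r)\bigr)}{\log (1/r)} > 0,
\]
which is exactly the unique positive number such that $M(r) = r^{1+\rho}$. Cauchy's inequalities applied to $\phi^n$ on the disc $|z|\leq r$ then yield
\[
|\hat{\phi^n}(m)| \leq \frac{M(r)^n}{r^m} = r^{(1+\rho)n - m} = \exp\!\Big(- \tfrac12 \bigl[(1+\rho)n - m \bigr] \Big),
\]
which is the announced inequality.

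As in Lemma \ref{majo coeff}, the estimate is genuinely informative only in the regime $m < (1+\rho)n$; for the other values of $(m,n)$ the bound is larger than $1$ and is anyway implied by the trivial estimate $|\hat{\phi^n}(m)| \leq \|\phi^n\|_\infty \leq 1$. I do not foresee any substantial obstacle: the only slightly subtle point is to recognise that the hypothesis $|\phi'(0)|<1$ rules out equality in Schwarz and so produces a strict inequality $M(r)<r$ at the chosen radius. The specific value $\tfrac12$ in the exponent is dictated by the choice $r = \e^{-1/2}$; any other $r \in (0,1)$ would lead to the same type of estimate up to an adjustment of $\rho$.
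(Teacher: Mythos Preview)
Your proof is correct and takes essentially the same approach as the paper: bound the Taylor coefficients of $\phi^n$ via Cauchy's inequalities on the circle $|z|=\e^{-1/2}$, after using the hypothesis $|\phi'(0)|<1$ to guarantee a strict inequality that produces the extra exponent $\rho$. The only cosmetic difference is that the paper first factors $\phi(z)=z\,\phi_1(z)$ and works with $M(r)=\sup_{|z|=r}|\phi_1(z)|<1$ (writing $M(r)=r^\rho$), whereas you apply the Schwarz lemma directly to $\phi$ to get $M(r)<r$ and write $M(r)=r^{1+\rho}$; the two formulations are equivalent and lead to the identical final estimate.
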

\begin{proof}
It is the same as that of Lemma~\ref{majo coeff}. Since $\phi (0) = 0$, we can write $\phi (z) = z\, \phi_1 (z)$. Since $| \phi ' (0) | < 1$, we 
have $\phi_1 \colon \D \to \D$. Let $M (r) = \sup_{|z| = r} | \phi_1 (z) |$. The Cauchy inequalities say that 
$|\hat {\phi_1^n} (m)| \leq [M (r)]^n / r^m$. We have $M (r) < 1$, so there exists a positive number $\rho = \rho (r)$ such that 
$M (r) = r^\rho$. We get:
\begin{displaymath}
|\hat {\phi^n} (m)| = |\hat {\phi_1^n} (m - n)| \leq \frac{r^{\rho n}}{r^{m - n}} = r^{(1 + \rho)\, n - m} \, ,
\end{displaymath}
and the result follows, by taking $r = \e^{- 1 / 2}$.
\end{proof}

The next lemma is a variant of the result of V.~\`E~Kacnel'son quoted before. 
\begin{lemma} \label{lemma 2}
Let $A \colon \ell_2 \to \ell_2$ be a bounded operator represented by the matrix $\big( a_{m, n} \big)_{m, n}$, i.e. 
$a_{m, n} = \langle A \, e_n, e_m \rangle$, where $(e_n)_{n \geq 1}$ is the canonical basis of $\ell_2$. 

Let $(d_n)$ be a sequence of positive numbers such that, for every $m$ and $n$:
\begin{equation} \label{cond trou}
d_m < d_n \quad \Longrightarrow \quad a_{m, n} = 0 \, .
\end{equation}

Then, $D$ being the (possibly unbounded) diagonal operator with entries $d_n$, we have:
\begin{displaymath}
\| D^{- 1} A D \| \leq \| A \| \, .
\end{displaymath}
\end{lemma}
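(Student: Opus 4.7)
The plan is to adapt Kacnel'son's proof (Theorem~\ref{theo Kacnelson}) by replacing the truncation to ``the first $k$ coordinates'' with a truncation to the sublevel sets of $(d_n)$. First I would verify the following invariance: for every $\lambda > 0$, letting $P_\lambda$ be the orthogonal projection of $\ell_2$ onto $\overline{\mathrm{span}}\{e_n : d_n \leq \lambda\}$, one has $P_\lambda A = P_\lambda A P_\lambda$. This is immediate from \eqref{cond trou}, since each column $A e_n = \sum_m a_{m,n} e_m$ is supported on $\{m : d_m \geq d_n\}$; in particular the closed subspace $\overline{\mathrm{span}}\{e_n : d_n > \lambda\}$ is $A$-invariant, which rewrites as $P_\lambda A (I - P_\lambda) = 0$.

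Next, given $x \in \ell_2$, I would set $y = Dx$, $z = Ay$, and introduce the ``distribution functions''
$$Y(\lambda) = \|P_\lambda y\|^2 = \sum_{d_n \leq \lambda} |y_n|^2, \qquad Z(\lambda) = \|P_\lambda z\|^2 = \sum_{d_m \leq \lambda} |z_m|^2.$$
The invariance yields $P_\lambda z = P_\lambda A y = (P_\lambda A)(P_\lambda y)$, hence the pointwise bound $Z(\lambda) \leq \|A\|^2 Y(\lambda)$ for every $\lambda > 0$. This is the exact analog of the step in Kacnel'son's proof which estimates the norm of a truncated output in terms of the truncated input.

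Finally, I would convert this level-set inequality into the sought norm inequality by a layer-cake argument: writing $1/t^2 = \int_t^\infty 2 \lambda^{-3}\, d\lambda$ and using Fubini--Tonelli,
$$\sum_m \frac{|z_m|^2}{d_m^2} = 2 \int_0^\infty Z(\lambda)\, \lambda^{-3}\, d\lambda \leq 2 \|A\|^2 \int_0^\infty Y(\lambda)\, \lambda^{-3}\, d\lambda = \|A\|^2 \sum_n \frac{|y_n|^2}{d_n^2} = \|A\|^2 \|x\|^2,$$
which is exactly $\|D^{-1} A D x\| \leq \|A\| \|x\|$. No step should present a genuine obstacle: the identity $P_\lambda A = P_\lambda A P_\lambda$ is a direct unpacking of the hypothesis, and what follows is routine. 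One could alternatively first reorder the basis to make $(d_n)$ non-decreasing (or group indices into level-set blocks) and appeal literally to Theorem~\ref{theo Kacnelson}, but the layer-cake proof above is cleaner and avoids worrying about whether such an enumeration is available.
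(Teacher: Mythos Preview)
Your argument is correct and genuinely different from the paper's. The paper proves Lemma~\ref{lemma 2} by a complex-analytic interpolation: for each finite truncation $A_N$, it forms the analytic family $z \mapsto D^{-z} A_N D^z$ on the closed right half-plane, observes that the entries $a_{m,n}(d_n/d_m)^z$ stay bounded there precisely because of \eqref{cond trou}, applies the maximum principle to the subharmonic function $z \mapsto \|A_N(z)\|$, and reads off the bound at $z=1$ from the unitary case $\Re z = 0$. Your route is purely real-variable: the observation $P_\lambda A = P_\lambda A P_\lambda$ followed by the layer-cake identity $d_m^{-2} = 2\int_{d_m}^\infty \lambda^{-3}\,d\lambda$ replaces the complex interpolation entirely. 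This is more elementary (no subharmonicity, no maximum principle) and arguably more transparent about where the hypothesis enters; the paper's approach, on the other hand, makes the link with Kacnel'son's original argument explicit and would generalize immediately to other powers $D^{-s}AD^s$ with $s>0$.

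One small point worth making explicit: setting $y = Dx$ tacitly assumes $x$ lies in the domain of $D$, which need not be all of $\ell_2$ when $(d_n)$ is unbounded. Running your computation for finitely supported $x$ and extending by density handles this; the paper sidesteps the issue via its finite-rank truncations $A_N$.
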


For the convenience of the reader, we reproduce the proof.
\begin{proof}
Let $\C_0$ be the right-half plane $\C_0 = \{z \in \C \tq \Re z > 0 \}$. We set $H_N = {\rm span}\, \{ e_n \tq n \leq N \}$ and
\begin{displaymath}
A_N = P_N A J_N \, ,
\end{displaymath}
where $P_N$ is the orthogonal projection from $\ell_2$ onto $H_N$ and $J_N$ the canonical injection from $H_N$ into $\ell_2$. 
We consider, for $z \in \overbar{\C_0}$:
\begin{displaymath}
A_N (z) = D^{- z} A_N D^z \colon H_N \to H_N \, ,
\end{displaymath}
where $D^z (e_n) = d_n^{\, z} e_n$. 

If $\big( a_{m, n} (z) \big)_{m, n}$ is the matrix of $A_N (z)$ on the basis $\{e_n \tq n \leq N\}$ of $H_N$, we clearly have:
\begin{displaymath}
a_{m, n} (z) = a_{m, n} (d_n / d_m)^z \, .
\end{displaymath}
In particular, we have, thanks to \eqref{cond trou}:
\begin{displaymath}
a_{m, n} (z) = 0  \quad \text{if } d_m < d_n \, ,
\end{displaymath}
and
\begin{displaymath}
\qquad\qquad\quad | a_{m, n} (z) | \leq \sup_{k, l} |a_{k, l}| := M \, , \qquad \text{for all } z \in \overbar{\C_0} \, .
\end{displaymath}
Since $\| A_N (z) \|^2 \leq \| A_N (z)\|_{HS}^2 = \sum_{m, n \leq N} |a_{m, n} (z)|^2 \leq (N + 1)^2 M^2$, we get:
\begin{displaymath}
\qquad \qquad \| A_N (z) \| \leq (N + 1) \, M \qquad \text{for all } z \in \overbar{\C_0} \, .
\end{displaymath}

Let us consider the function $u \colon \overbar{\C_0} \to \overbar{\C_0}$ defined by:
\begin{equation}
u_N (z) = \| A_N (z) \| \, .
\end{equation}
This function $u_N$ is continuous on $\overbar{\C_0}$, bounded above by $(N + 1) M$, and subharmonic in $\C_0$. Moreover, thanks to 
\eqref{cond trou}, the maximum principle gives:
\begin{displaymath}
\sup_{\overbar{\C_0}} u_N (z) = \sup_{\partial \C_0} u_N (z) \, .
\end{displaymath}
Since $\| D^z \| = \| D^{- z} \| = 1$ for $z \in \partial \C_0$, we have $\| A_N (z) \| \leq \| A_N \|$ for $z \in \partial \C_0$, and we get:
\begin{displaymath}
\sup_{\overbar{\C_0}} u_N (z) \leq \| A_N \| \leq \| A \| \, .
\end{displaymath}
In particular $u_N (1) \leq \| A \|$, and, letting $N$ going to infinity, we get $\| D^{- 1} A D \| \leq \| A \|$.
\end{proof}
\begin{proof} [Proof of Theorem~\ref{theo cond suff}]
First, if $|\phi ' (0) | = 1$, we have $\phi (z) = \alpha \, z$ for some $\alpha$ with $|\alpha | = 1$, and the result is trivial.

So, we assume that $|\phi ' (0) | < 1$. Then, by Lemma~\ref{lemma 1}, there exists $\rho > 0$ such that, for all $m$, $n$:
\begin{displaymath}
| \hat{\phi^n} (m) | \leq \exp \Big( - \frac{1}{2} \, [(1 + \rho) \, n - m] \Big) \, .
\end{displaymath}

Since $\phi (0) = 0$, we also know that $\hat{\phi^n} (m) = 0$ if $m < n$. 
\smallskip

Take $\delta = \rho / 2$ and use property \eqref{cond Luis}: there exists $C > 0$ such that:
\begin{displaymath}
\qquad \frac{\beta_m}{\beta_n} \leq C \quad \text{when } m \geq (1 + \delta) \, n \, .
\end{displaymath}

Define now a new sequence $\gamma = (\gamma_n)$ as:
\begin{displaymath}
\gamma_n = \max \bigg\{ \beta_n, \sup_{m > (1 + \delta) \, n} \beta_m \bigg\} \, .
\end{displaymath}

We have:
\smallskip

1) $\beta_n \leq \gamma_n \leq C \, \beta_n$;
\smallskip

2) $\gamma_m \leq \gamma_n$ \quad if $m \geq (1 + \delta) \, n$.
\medskip

Item 1) implies that $H^2 (\gamma) = H^2 (\beta)$, and we are reduced to prove that 
$C_\phi \colon H^2 (\gamma) \to H^2 (\gamma)$ is bounded.
\smallskip

Let $A = \big( a_{m, n} \big) = \big( \hat{\phi^n} (m) \big)$. We have to prove that
\begin{displaymath}
B = \big( \gamma_m^{1 / 2} \gamma_n^{- 1 / 2} a_{m, n} \big)_{m, n}
\end{displaymath}
represents a bounded operator on $\ell_2$.
\smallskip

Define the matrix
\begin{displaymath}
A_1 = \big( a_{m, n} \ind_{\{ (m, n) \tq m \leq (1 + \delta) \, n \} } \big)_{m, n}
\end{displaymath}
and set $A_2 = A - A_1$. Define analogously $B_1$ and $B_2 = B - B_1$.

Then $A_1$ is a Hilbert-Schmidt operator, because (recall that $a_{m, n} = 0$ if $m < n$)
\begin{align*}
\sum_{n = 1}^\infty \sum_{m = 1}^{(1 + \delta) \, n} |a_{m, n} |^2 
& \leq \sum_{n = 1}^\infty \sum_{m = n}^{(1 + \delta) \, n} \exp \big( - [(1 + \rho) \, n - m] \big) \\
& \leq \sum_{n = 1}^\infty (\delta \, n + 1) \, \exp ( - \delta \, n ) < \infty \, .
\end{align*}

Now, $\beta$ is bounded above and has a polynomial minoration, so, for some positive constants $C_1$, $C_2$, and $\alpha$, we have:
\begin{align*}
\sum_{n = 1}^\infty \sum_{m = n}^{(1 + \delta) \, n} \frac{\gamma_m}{\gamma_n} \, |a_{m, n} |^2 
& \leq \sum_{n = 1}^\infty \sum_{m = n}^{(1 + \delta) \, n} \frac{C_1 \ }{n^{- \alpha}} \, \exp ( - \delta \, n) \\
& \leq \sum_{n = 1}^\infty C_2 \, n^{\alpha  + 1} \exp ( - \delta \, n) < \infty \, ,
\end{align*}
meaning that $B_1$ is a Hilbert-Schmidt operator.
\smallskip

Since $A$ is bounded, it follows that $A_2 = A - A_1$ is bounded. Remark that, writing $A_2 = \big( \alpha_{m, n} \big)_{m, n}$, we 
have, with $d_n = 1 / \sqrt{\gamma_n}$:
\begin{displaymath}
d_m < d_n \quad \Longrightarrow \quad \gamma_m > \gamma_n \quad \Longrightarrow \quad m < (1 + \delta) \, n \quad \Longrightarrow 
\quad \alpha_{m, n} = 0 \, .
\end{displaymath}
Hence we can apply Lemma~\ref{lemma 2} to the matrix $A_2$, and it ensues that $B_2$ is bounded, and therefore that $B = B_1 + B_2$ is 
bounded as well, as wanted.
\end{proof}
\medskip

As a corollary of Theorem~\ref{theo cond suff}, we can provide the following example.
\begin{theorem} \label{theo example}
There exists a bounded sequence $\beta$, with a polynomial minoration, but which is \emph{not essentially decreasing}, for which every composition 
operator with symbol vanishing at $0$ is bounded on $H^2 (\beta)$, with nevertheless $\sup_{\phi (0) = 0} \| C_\phi \| = \infty$.
\end{theorem}

It should be noted that for this weight, the composition operators are not all bounded, as we will see in Proposition~\ref{T_a not bounded}.

\begin{proof} 
Define $\beta_n = 1$ for $n \leq 3!$, and, for $k \geq 3$:
\begin{displaymath}
\left\{
\begin{array} {lcl} 
\beta_n = & \dis \frac{1}{k!} & \text{for } k! < n \leq (k + 1)! - 2 \text{ and for } n = (k + 1)! \\
\\
\beta_n = & \dis \frac{1}{(k + 1)!} & \text{for } n = (k + 1)! - 1 \, .
\end{array}
\right.
\end{displaymath}

Note that, for $m > n$, we have $\beta_m > \beta_n$ only if $n = (k + 1)! - 1$ and $m = (k + 1)! = n + 1$, for some $k \geq 3$. 
\smallskip

However, $\beta$ is not essentially decreasing since, for every $k \geq 3$, we have $\beta_{n + 1} / \beta_n = k + 1$ if $n = (k + 1)! - 1$. 
\smallskip

The sequence $\beta$ has a polynomial minoration because $\beta_n \geq 1 / (2 \, n)$ for all $n \geq 1$. In fact, for $k \geq 3$, we have 
$\beta_n \geq (k + 1)/ n \geq 1 / n$ if $k! < n \leq (k + 1)! - 2$ or if $n = (k + 1)!$; and for $n = n = (k + 1)! - 1$, we have 
$n \, \beta_n = [(k + 1)! - 1] / (k + 1)! \geq 1 /2$. 
\smallskip

Now, it remains to check \eqref{cond Luis} in order to apply Theorem~\ref{theo cond suff} and finish the proof of Theorem~\ref{theo example}. 
Note first that we have $\beta_m / \beta_n \leq 1$ if $m \geq n + 2$. Next, for given $\delta > 0$, there exists an integer $N$ such that 
$(1 + \delta) \, n \geq n + 2$ for every $n \geq N$, so $\beta_m / \beta_n \leq 1$ if $m \geq (1 + \delta)\, n$ and $n \geq N$. 
It suffices to take $C = \max_{1 \leq n \leq N} \beta_{n + 1} / \beta_n$ to obtain \eqref{cond Luis}. The last assertion follows from Proposition~\ref{phi(0)=0}.
\end{proof}
\goodbreak

\section {Necessity of the $\Delta_2$-condition} \label{sec: necessity} 

In this section, we will show that, for essentially decreasing sequences $\beta$, the $\Delta_2$-condition is necessary for having boundedness of 
composition operators on $H^2 (\beta)$. We will indeed show slightly more.
\goodbreak

\begin{theorem} \label{theo necessity Delta-deux}
Let $\beta$ be such that, for some $a \in (0, 1)$, $T_a$ induces a bounded composition operator on 
$H^2 (\beta)$. Then $\beta$ satisfies: 
\begin{align*}
(\exists\,  0 < \delta < & \, 1/3) \ (\forall n \geq 1) \ (\exists \, E_n \subseteq [(1-2\delta)n, (1 - \delta) n] ) \quad  \text{with} \\ 
& |E_n| \geq \delta n \quad \text{and} \quad \beta_n \geq \delta \frac{1}{|E_n|} \sum_{m \in E_n} \beta_m \, . 
\end{align*}
In particular, if $\beta$ is essentially decreasing, then $\beta$ satisfies the $\Delta_2$-condition.
\end{theorem}

In order to prove Theorem~\ref{theo necessity Delta-deux}, we need several preliminary lemmas. The first one is standard, but we give it for convenience.

\begin{lemma} \label{lemma Poisson} 
Let $a \in (0, 1)$ and let 
\begin{displaymath} 
P_{- a} (x) = \frac{1 - a^2}{1 + 2 \, a \cos x + a^2} 
\end{displaymath} 
be the Poisson kernel at the point $- a$. Then, for all $x \in [- \pi, \pi ]$:
\begin{equation} 
T_{a} (\e^{i x}) = \exp \big[ i \, h_a (x) \big] \, ,
\end{equation} 
where 
\begin{equation} 
h_{a} (x) = \int_{0}^x P_{- a} (t) \, dt \, . 
\end{equation} 
\end{lemma}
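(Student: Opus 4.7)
The plan is to differentiate $\arg T_a(e^{ix})$ with respect to $x$, identify the result with $P_{-a}(x)$, and then integrate.

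First I would observe that since $T_a$ is an automorphism of $\D$, it maps $\T$ to $\T$, so we can write $T_a(\e^{ix}) = \e^{i\theta(x)}$ for a continuous real-valued function $\theta$ on $[-\pi,\pi]$; since $T_a(1) = (a+1)/(1+a) = 1$, we may normalize by $\theta(0) = 0$. Differentiating the identity $T_a(\e^{ix}) = \e^{i\theta(x)}$ yields
\begin{equation*}
\theta'(x) = \frac{\e^{ix}\, T_a'(\e^{ix})}{T_a(\e^{ix})}\,,
\end{equation*}
and it only remains to show that this real-valued expression equals $P_{-a}(x)$.

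For the computation, I use $T_a'(z) = (1-a^2)/(1+az)^2$, so
\begin{equation*}
\frac{z\, T_a'(z)}{T_a(z)} = \frac{z(1-a^2)}{(1+az)(a+z)}\,.
\end{equation*}
Setting $z = \e^{ix}$ and expanding the denominator gives
\begin{equation*}
(1+a\e^{ix})(a+\e^{ix}) = \e^{ix}\bigl[a(\e^{-ix}+\e^{ix}) + (1+a^2)\bigr] = \e^{ix}\bigl(1 + 2a\cos x + a^2\bigr)\,,
\end{equation*}
and the $\e^{ix}$ factor cancels with the one in the numerator, leaving exactly $P_{-a}(x)$.

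Integrating $\theta'(x) = P_{-a}(x)$ from $0$ to $x$ with $\theta(0)=0$ yields $\theta(x) = h_a(x)$, which is the asserted identity. There is no real obstacle here; the only point requiring mild care is fixing the continuous branch of the argument using $T_a(1) = 1$, which is what makes the answer come out to $h_a(x)$ rather than $h_a(x) + 2\pi k$.
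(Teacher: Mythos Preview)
Your proof is correct and follows essentially the same route as the paper: write $T_a(\e^{ix}) = \e^{i\theta(x)}$ with $\theta(0)=0$, differentiate, identify $\theta'(x)$ with $P_{-a}(x)$, and integrate. The only cosmetic difference is in the algebra: the paper isolates $u'(t)$ directly from the differentiated identity to obtain $u'(t) = (1-a^2)/|1+a\e^{it}|^2$, whereas you factor the denominator $(1+az)(a+z)$ at $z=\e^{ix}$ to reach the same conclusion.
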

\begin{proof}
For $t \in [- \pi, \pi]$,  write:
\begin{displaymath} 
\psi (t) := \frac{\e^{it} + a}{1 + a\, \e^{it}} = \exp(i \, u (t) ) \, ,
\end{displaymath} 
with $u$ a real-valued, ${\cal C}^1$ function on $[- \pi, \pi]$ such that $u (0) = 0$. This is possible since $|\psi (\e^{it})| = 1$ and $\psi (0) = 1$. 
Differentiating both sides with respect to $t$, we get:
\begin{displaymath} 
i \, \e^{it} \frac{1 - a^2}{(1 + a \, \e^{it})^2} = i \, u' (t) \, \frac{\e^{it} + a}{1 + a \, \e^{it}} \, \cdot
\end{displaymath} 
This implies 
\begin{displaymath} 
u' (t) = \frac{1 - a^2}{|1 + a \, \e^{it}|^2} = P_{- a} (t) \, ,
\end{displaymath} 
and the result follows since $u (0) = 0 = h_a (0)$. 
\end{proof}
%

\subsection {Main lemma and proof of Theorem~\ref{theo necessity Delta-deux}} \label{subsec: main lemma}

To prove Theorem~\ref{theo necessity Delta-deux}, we need the following lemma. 
The proof of this lemma uses Lemma~\ref{lemma Poisson} and a van der Corput type estimate, inspired from \cite[pp.~72--73]{TIT}. 
We thank R.~Zarouf \cite{ZAR} for interesting recent informations in this respect, related to his joint work with O.~Szehr on the Sch\" affer problem 
(see \cite{SZZA}, in which the authors are primarily concerned with upper bounds). 

The first version of our paper was put on arXiv at the end of November 2020. Since then, the paper \cite{Bo-Fo-Za} was put on arXiv on July 2021, 
where sharp estimates of powers of Blaschke factors are given (see also K.~Fouchet's thesis \cite{Fouchet}), with different purposes (strongly annular 
analytic functions). However, in our case, our proof is much simpler.
\smallskip

Recall that we have set:
\begin{equation} 
[T_{a} (z)]^{n} = \sum_{m = 0}^\infty a_{m, n} z^m \, .
\end{equation} 
\goodbreak

\begin{lemma} \label{main lemma}
Let $a \in (0, 1)$. We set:
\begin{equation} \label{def tau}
\tau = \frac{1 + a}{1 - a} > 1 
\end{equation}
and write:
\begin{equation} \label{def mu}
\tau^{- 1} = 1 - 3 \mu \, ,
\end{equation} 
with $\mu = \mu_a \in (0, 1/3)$. For every fixed positive integer $n$, let:
\begin{equation} 
J_n = [(1 - 2 \mu) \, n, (1 - \mu) \, n] \, .
\end{equation} 
Then, there exists $\delta = \delta_a > 0$ such that, for every $n$ large enough, there exists a set of indices $E_n \subseteq J_n$ with cardinality 
$|E_n| \geq \delta n$ and such that:
\begin{equation} \label{suth}   
m \in E_n \quad \Longrightarrow  \quad |a_{m, n}|\geq \delta \, n^{- 1/2} \, . 
\end{equation} 
\end{lemma}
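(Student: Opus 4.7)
The plan is to express $a_{m,n}$ as an oscillatory integral and apply the method of stationary phase to produce an $n^{-1/2}$ asymptotic with a controlled phase, then combine this with an equidistribution argument to extract a linear-sized set of $m$ on which the cosine factor does not vanish. By Lemma~\ref{lemma Poisson},
$$a_{m,n} = \frac{1}{2\pi}\int_{-\pi}^{\pi}\e^{in\psi_\xi(t)}\,dt, \qquad \psi_\xi(t) := h_a(t)-\xi\, t,\ \xi := m/n.$$
The critical points of $\psi_\xi$ on $(-\pi,\pi)$ solve $P_{-a}(t)=\xi$. Since $P_{-a}$ is even, smooth, and strictly increasing on $[0,\pi]$ with $P_{-a}(0)=\tau^{-1}=1-3\mu$ and $P_{-a}(\pi)=\tau$, and since $m\in J_n$ forces $\xi\in[1-2\mu,1-\mu]$, the phase has exactly two nondegenerate critical points $\pm t_0(\xi)$, with $t_0$ smooth in $\xi$ and lying in a compact set $K\subset(0,\pi)$. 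In particular $t_0(\xi)$ and $P_{-a}'(t_0(\xi))$ are bounded above and bounded away from zero uniformly for $\xi\in[1-2\mu,1-\mu]$.

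Next I apply the classical stationary phase / van der Corput expansion (as in \cite[pp.~72--73]{TIT}) at each of the two critical points. Using the symmetries $\psi_\xi(-t)=-\psi_\xi(t)$ and $\psi_\xi''(-t)=-\psi_\xi''(t)$, the contributions of $t_0$ and $-t_0$ combine into a real cosine; the boundary contributions at $\pm\pi$ are $O(1/n)$ via integration by parts, since $\psi_\xi'(\pm\pi)=\tau-\xi$ is bounded away from zero. This gives, uniformly in $\xi\in[1-2\mu,1-\mu]$,
$$a_{m,n} = \sqrt{\frac{2}{\pi n\,P_{-a}'(t_0(\xi))}}\,\cos\!\bigl(n\Theta(\xi)+\tfrac{\pi}{4}\bigr) + O(1/n),$$
where $\Theta(\xi):=h_a(t_0(\xi))-\xi\,t_0(\xi)$, and the amplitude is of order exactly $n^{-1/2}$.

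To extract many indices on which the cosine is bounded away from zero, I study how its argument moves with $m$. Differentiating and using $P_{-a}(t_0(\xi))=\xi$ yields the clean identity $\Theta'(\xi)=-t_0(\xi)\in-K$. Setting $\Phi(m):=n\Theta(m/n)+\pi/4$, the mean value theorem gives $\Phi(m+1)-\Phi(m)\in-K$, so $(\Phi(m))_{m\in J_n}$ is an approximate arithmetic progression in $\R/2\pi\Z$ whose step has constant sign and magnitude in a fixed compact subset of $(0,\pi)$. A routine equidistribution argument (any such walk visits each arc of $\T$ with linear frequency) then shows that a positive proportion of these values lies in the arc $\{\theta\tq|\cos\theta|\geq 1/2\}$, which covers two thirds of the circle; hence
$$F := \{m \in J_n \tq |\cos\Phi(m)| \geq 1/2\}$$
satisfies $|F|\geq\gamma\,n$ for some $\gamma=\gamma(a)>0$.

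Combining these two steps, for $n$ large enough that the $O(1/n)$ remainder is dominated by half the main amplitude, every $m\in F$ satisfies $|a_{m,n}|\geq\delta\,n^{-1/2}$; taking $E:=F$ and shrinking $\delta$ if necessary gives the conclusion. I expect the principal technical obstacle to be the uniformity of the stationary-phase remainder as $\xi$ varies over $[1-2\mu,1-\mu]$: getting an $O(1/n)$ error \emph{uniformly} demands that $t_0(\xi)$ remain away from the zeros of $P_{-a}'$ (namely $0$ and $\pm\pi$), which is precisely what the calibration of $J_n$ through $\mu$ ensures; the third-paragraph oscillation argument likewise relies on $\Theta'(\xi)=-t_0(\xi)$ staying bounded away from zero, so the same interval $J_n$ handles both steps at once.
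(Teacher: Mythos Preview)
Your argument is correct and follows the same stationary-phase-plus-phase-tracking scheme as the paper; the paper works on $[0,\pi]$ and takes a real part, obtaining an $O(n^{-3/5})$ remainder from its Theorem~\ref{tihebr}, while you pair the two critical points on $[-\pi,\pi]$ directly into a cosine with a (sharper, but not strictly needed) $O(1/n)$ error. One point worth flagging: your increment computation $\Phi(m{+}1)-\Phi(m)=\Theta'(\xi^\ast)=-t_0(\xi^\ast)\in -K$, of order one, is the correct value, and your equidistribution step is then a monotone walk winding $\Theta(n)$ times around $\T$; the paper's Second Step instead arrives at $\theta_{m+1}-\theta_m\approx n^{-1}$ by writing $\theta_{m+1}-\theta_m = F(x_{m+1})-F(x_m)$ and thereby dropping the term $-x_{m+1}$ coming from the explicit $m$-dependence of $F$, so your version is in fact the right one --- though either picture yields a positive-proportion set $E$ and the same conclusion.
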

\begin{proof} [Proof of Theorem~\ref{theo necessity Delta-deux}] 
\smallskip

Set $M = \| C_{T_a} \|$. We have:
\begin{equation} \label{T_a bounded}
\sum_{m = 0}^\infty |a_{m, n}|^2 \beta_m = \| {T_a}^n \|^2 = \| C_{T_a} (z^n) \|^2 \leq \| C_{T_a} \|^2 \| z^n \|^2 = M^2 \beta_n \, . 
\end{equation} 
so, by Lemma~\ref{main lemma}, since $E_n \subseteq J_n = [(1 - 2 \mu) n, (1 - \mu) n]$:
\begin{displaymath} 
M^2 \beta_n \geq \sum_{m \in E_n} |a_{m, n}|^2 \beta_m \geq \delta^2 \, n^{- 1} \sum_{m \in E_n} \beta_m  
\geq \delta^3 \, |E_n|^{-1} \,  \sum_{m \in E_n} \beta_m \, . 
\end{displaymath} 

This proves (changing $\delta$) the first part of Theorem \ref{theo necessity Delta-deux}. Next, assume that $\beta$ is essentially decreasing. We may, 
and do, assume that $\beta$ is non-increasing. We set, for $x$ not an integer, $\beta_x = \beta_k$ with $k$ the least integer greater than $x$. 
The above implies, for all integers $n \geq 1$: 
\begin{displaymath} 
\beta_n \geq (\delta^3 / M^2) \, |E_n|^{- 1} |E_n| \, \beta_{(1 - \mu) n}\geq c \, \beta_{(1 - \mu) n} \, .
\end{displaymath}

Let $r \geq 1$ such that $(1 - \mu)^r \leq 1 /2$; we have:
\begin{displaymath} 
\beta_n \geq c^r \, \beta_{(1 - \mu)^r n} \geq c^r \, \beta_{n / 2} \, ,
\end{displaymath} 
so $\beta$ satisfies the $\Delta_2$-condition. 
\end{proof}

A consequence of Theorem~\ref{theo necessity Delta-deux} is the following result.
\begin{proposition} \label{T_a not bounded}
For the weight $\beta$ constructed in the proof of Theorem~\ref{theo example}, no automorphism $T_a$ with $0 < a < 1$ can be bounded.
\end{proposition}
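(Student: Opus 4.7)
The strategy is by contradiction: assume that $C_{T_a}$ is bounded on $H^2 (\beta)$ for some $a \in (0, 1)$, with norm $M$. Expanding $[T_a (z)]^n = \sum_m a_{m, n} z^m$ and testing the boundedness on the normalized basis vector $e_n^\beta = z^n / \sqrt{\beta_n}$, one obtains
\[
\sum_{m = 0}^\infty \beta_m \, |a_{m, n}|^2 \leq M^2 \, \beta_n \qquad \text{for every } n \geq 0.
\]
The plan is to exhibit a sequence $n_k \to \infty$ for which the left-hand side is forced to grow much faster than the right-hand side, using the sharp drop of $\beta$ at indices of the form $(k + 1)! - 1$ together with the lower bound on $|a_{m, n}|$ provided by Lemma~\ref{main lemma}.

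Concretely, I take $n_k = (k + 1)! - 1$, so that $\beta_{n_k} = 1 / (k + 1)!$, which is $(k + 1)$ times smaller than the common value $1/k!$ that $\beta$ takes on essentially all other indices in $(k!, (k + 1)!]$. Lemma~\ref{main lemma} gives, for $k$ large enough, a set $E \subseteq J_{n_k} = [(1 - 2 \mu) n_k, (1 - \mu) n_k]$ with $|E| \geq \delta n_k$ and $|a_{m, n_k}| \geq \delta \, n_k^{- 1/2}$ for every $m \in E$, where $\mu = \mu_a$ and $\delta = \delta_a$ are the constants from the lemma. A short arithmetic check shows that $E \subseteq (k!, (k + 1)! - 2]$ for every sufficiently large $k$: the lower endpoint $(1 - 2 \mu) n_k$ exceeds $k!$ as soon as $(1 - 2 \mu)(k + 1) > 1$, and the upper endpoint $(1 - \mu) n_k$ lies below $(k + 1)! - 2$ as soon as $\mu \, (k + 1)! \geq 1 + \mu$. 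By construction of $\beta$, this forces $\beta_m = 1 / k!$ for every $m \in E$.

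Plugging these facts into the displayed inequality yields
\[
M^2 \, \beta_{n_k} \geq \sum_{m \in E} \beta_m \, |a_{m, n_k}|^2 \geq \frac{1}{k!} \cdot |E| \cdot \frac{\delta^2}{n_k} \geq \frac{\delta^3}{k!},
\]
hence $M^2 \geq \delta^3 \, (k + 1)! / k! = \delta^3 (k + 1)$, which tends to $+ \infty$ as $k \to \infty$. This contradicts the boundedness of $C_{T_a}$ and proves the proposition. The main difficulty is already absorbed into Lemma~\ref{main lemma}; the only additional work is the arithmetic verification that the set $E$ produced there falls inside the plateau of $\beta$ immediately to the left of the well at $n_k$, which is exactly what makes the weight $\beta$ defeat every $T_a$ despite being well behaved at symbols fixing the origin.
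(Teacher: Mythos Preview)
Your proof is correct and follows essentially the same route as the paper's: test the boundedness inequality on a monomial $z^{n_k}$ chosen so that $\beta_{n_k} = 1/(k+1)!$, and use Lemma~\ref{main lemma} to get many coefficients $a_{m,n_k}$ of size $\gtrsim n_k^{-1/2}$ at indices $m$ where $\beta_m = 1/k!$, forcing $M^2 \gtrsim k+1$. The only cosmetic difference is that the paper takes $n_k = (k+1)! + 1$ while you take $n_k = (k+1)! - 1$; both choices give $\beta_{n_k} = 1/(k+1)!$ and the verification that $J_{n_k}$ lands in the plateau $(k!, (k+1)!-2]$ is equally straightforward in either case.
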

\begin{proof}
Consider the necessary condition for the boundedness of $C_{T_a}$ in Theorem~\ref{theo necessity Delta-deux}:
\begin{equation} \label{necnec}  
\beta_n \geq \delta \, \frac{1}{|E_n|} \sum_{m \in E_n} \beta_m \, .
\end{equation}
For the weight $\beta$ constructed in the proof of Theorem~\ref{theo example}, we are going to see that this condition \eqref{necnec} is not satisfied 
for $n = (k + 1)! - 1 =: n_k$. 

Indeed, for this $n$, the left-hand side of \eqref{necnec} is equal to $1 / (k + 1)!$ and the right-hand side to $\delta / k!$ since $(1 - 2 \delta) n_k > k!$ 
for $k$ large, so that all $\beta_m$ are equal to $1 / k!$ for $m \in E_{n_k}$. This ends the proof.
\end{proof}

%
\subsection {Proof of Lemma~\ref{main lemma}}

To prove Lemma~\ref{main lemma}, we will use a variant of \cite[Lemma~4.6 p.~72]{TIT} on the stationary phase method. 
A careful reading of the proof in \cite[ p.~72]{TIT} gives the version below, which allows the derivative $F '$ of $F$ to vanish at some point, as occurs in 
our situation. For sake of completeness, we will give a proof, however postponed.
\goodbreak

\begin{proposition} [Stationary phase] \label{tihebr}  
Let $F$ be real function on the interval $[ A, B]$, with continuous derivatives up to the third order and $F'' > 0$ throughout $]A, B[$.  Assume that 
there is a (unique) point $c$ in $]A, B[$ such that $F' (c) = 0$, and that, for some positive numbers $\lambda_2$, $\lambda_3$, and $\eta$, the following 
assertions hold: 
\begin{enumerate}
\setlength\itemsep {-0.1 em}

\item [$1)$] $[c - \eta, c + \eta] \subseteq [A, B]$;

\item [$2)$] $F '' (x) \geq \lambda_2$ for all $x \in [c - \eta, c + \eta]$;

\item [$3)$]  $|F ''' (x)| \leq  \lambda_3$  for all $x \in [A, B]$.
 \end{enumerate}
Then:
\begin{equation} \label{ozouf} 
\int_A^B \e^{i F(x)} \, dx = \sqrt{2 \pi} \, \, \frac{\, \e^{i (F (c) + \pi/4)} \, }{\ |F '' (c)|^{1/2}} + O \, \bigg( \frac{1}{\eta \lambda_2} + \eta^4 \lambda_3 \bigg) \, ,
\end{equation}
where the $O$ involves an absolute constant. 
\end{proposition}
\goodbreak

\begin{proof} [Proof of Lemma~\ref{main lemma}]
We turn to the problem of bounding $a_{m, n}$ from below, \emph{in the case $m \in J_n$, and only in that case}. 
Since $\inf_{[0, \pi]} P_{- a} = \tau^{- 1}< \sup_{[0, \pi]}P_{- a} = \tau$, there exists a unique point $x_m = x_{m, n} \in [0, \pi]$ such that
\begin{displaymath} 
n P_{- a} (x_m) - m = n \, \frac{(1 - a^2)}{1 + 2 \, a \cos x_m + a^2} - m = 0 \, ,
\end{displaymath} 
or else:
\begin{equation} \label{cosinus}
\cos x_m = \frac{n}{m} \frac{1 - a^2}{2 \, a} - \frac{1 + a^2}{2 \, a} \, \cdot 
\end{equation} 

The point is that if $m \in J_n$, $x_m$ can approach neither $0$ nor $\pi$, so that $\sin x_m \geq \delta_a > 0$; 
more precisely, the definition of $J_n$ and \eqref{cosinus} imply that $\pi / 4 \leq x_m \leq \pi / 2$. 

With $h_a$ the function of Lemma~\ref{lemma Poisson}, the Fourier formulas give, since $a_{m, n}$ is real, or since $h_a (x) - m x$ is odd: 
\begin{displaymath} 
2 \pi a_{m, n} = \int_{- \pi}^{\pi} \exp i [n h_a (x) - m x] \, dx = 2 \, \Re I_{m, n} \, , 
\end{displaymath} 
where 
\begin{equation} 
I_{m, n} = \int_{0}^{\pi} \exp i [n h_a (x) - m x ] \, dx \, .
\end{equation} 
Write:
\begin{equation} 
I_{m, n} = \int_{0}^{\pi} \exp [i \, F_m (x) ] \, dx \, ,
\end{equation} 
with:
\begin{equation} \label{phase} 
\qquad F_m (x) = n h_a (x) - m x = n \int_{0}^x P_{- a} (t) \, dt - m x \, .
\end{equation} 
We have:
\begin{equation}  \label{derivative phase} 
F_m ' (x) = n \, P_{- a} (x) - m \, .
\end{equation} 

We will now proceed in two steps, first giving good lower bounds for $|I_{m, n}|$, then showing that the argument of $I_{m, n}$ is often far from 
$\pi/2$ mod.~$\pi$. Then, we will be done. 
\medskip

\noindent {\bf First step.} We will prove that:
\begin{equation} \label{cruc} 
I_{m, n} = \sqrt{2 \pi} \, n^{- 1/2} \,\, \frac{\e^{i \, (F_m (x_m) + \pi/4)}}{\sqrt{|h_a '' (x_m)|}} + O \, (n^{- 3/5}) \, ,
\end{equation} 
where the $O$ only depends on $a$. 
\smallskip

Note that  $3/5 > 1/2$  and  $F_m '' = n \, h_a ''$.
\smallskip

To get \eqref{cruc}, we will show that Theorem~\ref{tihebr} is applicable with:
\begin{displaymath} 
[A, B] = [0, \pi] \, , \quad  c = x_m \, , \quad \lambda_2 = \kappa_0 \, n \, , \quad  \lambda_3 = C_0 n \, , \quad  
\eta = (\lambda_{2} \lambda_{3})^{- 1/5} \, .
\end{displaymath} 
The parameter $\eta$ is chosen in order to make both error terms in Theorem~\ref{tihebr} equal:  $ \frac{1}{\eta \lambda_2} = \eta^4 \lambda_3$; so:
\begin{displaymath} 
\eta = \kappa \, n^{- 2/5}   
\end{displaymath} 
and 
\begin{equation} \label{big O}
\frac{1}{\eta \lambda_2} + \eta^4 \lambda_3 = \tilde \kappa \, n^{- 3/5} = O\, (n^{- 3/5}) 
\end{equation} 
(with $\kappa = (\kappa_0 C_0)^{- 1 / 5}$ and $\tilde \kappa = 2 / \kappa_0 \kappa$). 
\smallskip

The slight technical difficulty encountered here is that $F_m ''(x)$ vanishes at $0$ and $\pi$. But Theorem~\ref{tihebr} covers this case. We have 
\begin{displaymath} 
F_m ''(x) = n P'_{- a} (x) = 2 \, a (1 - a^2) \, \frac{\sin x}{(1 + 2 a \cos x + a^2)^2} \, n \, ,
\end{displaymath} 
and there are some positive $\kappa_0$ and $\sigma$ such that 
\begin{equation} \label{rectif} 
F_m ''(x) \geq \kappa_0 \, n = \lambda_2  \quad \text{for } x \in [\sigma , \pi - \sigma] \, .
\end{equation}

Now (for $n$ large enough), $[x_m - \eta, x_m + \eta] \subseteq  [\sigma, \pi - \sigma]$. Hence the assumptions $1)$ and $2)$ of 
Proposition~\ref{tihebr} are satisfied. 
\smallskip

Finally, since $F_m (x) = n h_a (x) - m x$, and $h_a$ is ${\cal C}^\infty$ on $\R$, we have, for all $x \in [0, \pi]$:
\begin{displaymath} 
|F_m ''' (x) | \leq  C_0 \, n = \lambda_3 \, ,
\end{displaymath} 
and assertion $3)$ of Proposition~\ref{tihebr} holds. 
\smallskip

With \eqref{big O} this ends the proof of \eqref{cruc}, once we remarked that $n h_a '' (x_m) = F_m '' (x_m)$. 
\smallskip

Note that, since  $|h_a '' (x_m)|\leq M_a$, we get that $|I_{m, n}| \geq \delta \, n^{- 1/2}$ when  $m \in J_n$.
\medskip

\noindent {\bf Second step.} The mean-value theorem gives, for $m \in J_n$:
\begin{equation} \label{mvth} 
\quad\!\! |\sin x_m| \geq \delta \quad \text{and} \quad   x_{m + 1} - x_m \approx \cos x_m - \cos x_{m + 1} \, . 
\end{equation}
We also have, for $x \in {\cal J} = [1 - 2 \mu, 1 - \mu]$, with another constant $\delta$: 
\begin{equation} \label{alha} 
\delta \, \leq  P'_{- a} (x) = 2 \, a (1 - a^2) \, \frac{\sin x}{(1 + 2 a \cos x + a^2)^2} \leq \delta^{- 1} \, . 
\end{equation} 

We now claim that
\begin{equation} \label{clth} 
x_{m + 1} - x_m \approx n^{- 1} \quad \text{for } m \in J_n \, . 
\end{equation} 
Indeed, since $m \in J_n$, we have, using \eqref{cosinus}: 
\begin{displaymath} 
\cos x_m - \cos x_{m + 1} = \frac{1 - a^2}{2 a} \, \frac{n}{m (m + 1)} \approx \frac{n}{m^2} \approx n^{- 1} \, .
\end{displaymath} 
In view of \eqref{mvth}, this proves \eqref{clth}. 

Now, according to \eqref{cruc}, when $m \in J$, the main term in $I_{m, n}$ is
\begin{displaymath} 
A_{m, n} := n^{- 1/2}\,\frac{\sqrt{2 \pi}}{\sqrt{|h_a '' (x_m)|}} \, \e^{i (F_m (x_m) + \pi/4)} \, ,
\end{displaymath} 
and its argument $\theta_m$ is $F_m(x_m) + \pi/4$.  Going from $m$ to $m + 1$, the variation $F_{m + 1} (x_{m + 1}) - F_m (x_m) $ of this argument is
\begin{align*} 
\theta_{m + 1} - \theta_m 
& = n \int_{x_m}^{x_{m+  1}} \bigg( P_{- a} (t) - \frac{m}{n} \bigg) \, dt - x_{m + 1} \\ 
& = n \int_{x_m}^{x_{m + 1}} \bigg( P_{- a} (t) - P_{- a} (x_m) \bigg) \, dt - x_{m + 1} \, .
\end{align*} 
But, due to \eqref{alha}, this implies:
\begin{align*} 
\theta_{m + 1} - \theta_m 
& = - x_{m + 1} + {\rm O}\, \bigg( n \int_{x_m}^{x_{m + 1}} (t - x_m) \, dt \bigg) \\ 
& = - x_{m + 1} + {\rm O} \, \big( n \, (x_{m + 1} - x_m)^2 \big) = - x_{m + 1} + {\rm O}\, (n^{- 1}) \, .
\end{align*} 
Since $- \sigma \leq - x_{m + 1} \leq \pi - \sigma$, the variation of  $\theta_m$ is thus regular. 
As a 
consequence, for a positive proportion $E_n$ of the indices $m \in J_n$, the argument $\theta_m$ will belong to a subarc of $\T$ which lies $\delta$-apart from 
$\pm \pi/2$, implying $\cos \theta_m \geq \delta$, or else:  
\begin{displaymath} 
|E_n| \geq \delta \, n \, ,
\end{displaymath} 
and, for all $m \in E_n$:
\begin{displaymath} 
\Re A_{m, n} \geq \delta \, |A_{m,n}| \geq \delta \, n^{- 1/2}.  
\end{displaymath} 
It follows that, for $m \in  E_n$: 
\begin{displaymath} 
\Re  I_{m, n} \geq \delta \, n^{- 1/2} - C \, n^{- 3/5} \geq \tilde\delta \, n^{- 1/2} \, .
\end{displaymath} 
Since $a_{m, n} = \pi^{- 1} \Re  I_{m, n}$, that ends the proof of Lemma~\ref{main lemma}. 
\end{proof}
\goodbreak


We now pass to the proof of Proposition~\ref{tihebr}. The following lemma can be found in \cite[Lemma~1, page 47]{Montgomery}. 
\goodbreak

\begin{lemma} \label{spc} 
Let $F \colon [u,v] \to \R$, with $u < v$, be a ${\cal C}^2$- function with $F'' > 0$, and $F '$ not vanishing on $[u, v]$. Let 
\begin{displaymath}
J = \int_{u}^v e^{i F(x)} \, dx \, . 
\end{displaymath}
Then: 
\begin{itemize}
\setlength\itemsep {-0.1 em}

\item [{\rm a)}] if $F ' > 0$ on $[u, v]$, then $|J| \leq \frac{2}{ F ' (u)}\,$;

\item [{\rm b)}] If $F ' < 0$ on $[u, v]$, then $|J| \leq \frac{2}{|F ' (v)|} \, \cdot$
\end{itemize}
\end{lemma}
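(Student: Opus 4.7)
The plan is to use a single integration by parts, exploiting the fact that the phase has no critical points so one can legitimately divide by $F'$.

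Since $F'$ does not vanish on $[u,v]$, I would write $\e^{i F(x)} = \frac{1}{i F'(x)} \frac{d}{dx}\bigl(\e^{i F(x)}\bigr)$ and integrate by parts, obtaining
\begin{displaymath}
J = \left[ \frac{\e^{i F(x)}}{i F'(x)} \right]_u^v + \int_u^v \e^{i F(x)} \, \frac{F''(x)}{i F'(x)^2} \, dx \, .
\end{displaymath}
The boundary terms give a contribution bounded in modulus by $\frac{1}{|F'(u)|} + \frac{1}{|F'(v)|}$. For the remaining integral, the crucial observation is that the integrand $F''/F'^2$ has constant sign on $[u,v]$ since $F'' > 0$ everywhere. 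Therefore its modulus integrates to
\begin{displaymath}
\int_u^v \frac{F''(x)}{F'(x)^2} \, dx = \Bigl[ - \frac{1}{F'(x)} \Bigr]_u^v = \frac{1}{F'(u)} - \frac{1}{F'(v)} \, ,
\end{displaymath}
which, up to sign, equals $\bigl| \tfrac{1}{F'(u)} - \tfrac{1}{F'(v)} \bigr|$.

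Now I split into the two cases. In case a), $F' > 0$ together with $F'' > 0$ means $F'$ is strictly increasing and positive, so $1/F'(u) > 1/F'(v) > 0$; hence
\begin{displaymath}
|J| \leq \frac{1}{F'(u)} + \frac{1}{F'(v)} + \Bigl( \frac{1}{F'(u)} - \frac{1}{F'(v)} \Bigr) = \frac{2}{F'(u)} \, .
\end{displaymath}
In case b), $F' < 0$ and increasing, so $|F'|$ is strictly decreasing and $|F'(v)| < |F'(u)|$; writing $1/F'(x) = - 1/|F'(x)|$ the bracket term becomes $1/|F'(v)| - 1/|F'(u)| > 0$, and summing with the boundary terms gives
\begin{displaymath}
|J| \leq \frac{1}{|F'(u)|} + \frac{1}{|F'(v)|} + \Bigl( \frac{1}{|F'(v)|} - \frac{1}{|F'(u)|} \Bigr) = \frac{2}{|F'(v)|} \, .
\end{displaymath}

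There is no real obstacle here: the argument is a one-line integration by parts, and the only bookkeeping is to track the sign of $F'$ in order to identify which endpoint survives when one sums the boundary contribution with the explicit value of the bracket $\bigl[-1/F'(x)\bigr]_u^v$. The monotonicity of $1/F'$ (guaranteed by $F'' > 0$) is what makes the telescoping yield the cancellation leading to the bound $2 / F'(u)$ or $2 / |F'(v)|$, respectively.
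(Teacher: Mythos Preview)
Your argument is correct: the integration-by-parts step is valid since $F'$ is nonvanishing, and the key point that $F''/F'^2$ has constant sign allows the integral term to be evaluated exactly as $\bigl[-1/F'(x)\bigr]_u^v$, after which the sign bookkeeping in each case yields the stated bounds. The paper does not actually give its own proof of this lemma; it simply cites \cite[Lemma~1, page~47]{Montgomery}, and the argument there is precisely the one you have written.
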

\goodbreak

\begin{proof} [Proof of Proposition~\ref{tihebr}] 

Write now the integral $I$ of Proposition~\ref{tihebr} on $[A, B]$ as $I = I_1 + I_2 + I_3$ with:
\begin{displaymath}
I_1 = \int_A^{c - \eta} \e^{i F(x)} \, dx \, , \quad  I_2 = \int_{c - \eta}^{c + \eta} \e^{i F(x)} \, dx \, , 
\quad  I_3 = \int_{c + \eta}^B \e^{i F(x)} \, dx \, .
\end{displaymath}
Lemma~\ref{spc} with $u = A$ and $v = c - \eta$ implies:
\begin{equation} \label{ein} 
|I_1| \leq  \frac{2}{|F ' (c -  \eta)|} \leq \frac{2}{\eta \lambda_2} \, \raise 1 pt \hbox{,}  
\end{equation}
where, for the last inequality, we just have to write 
\begin{displaymath}
|F ' (c - \eta)| = F ' (c) - F ' (c - \eta) = \eta \, F '' (\xi) 
\end{displaymath}
for some $\xi \in [c - \eta, c]$ so that $F '' (\xi) \geq \lambda_2$. 
\smallskip

Similarly, Lemma~\ref{spc} with $u = c + \eta$ and $v = B$ implies 
\begin{equation} \label{zwei} 
|I_3| \leq  \frac{2}{F ' (c + \eta)} \leq \frac{2}{\eta \lambda_2} \, \cdot 
\end{equation}

We can now estimate $I_2$. The Taylor formula shows that
\begin{displaymath}
F (x) = F (c) + \frac{(x - c)^2}{2} F '' (c) + R \, ,
\end{displaymath}
with
\begin{displaymath}
|R| \leq \frac{|x - c|^3}{6} \, \lambda_3 \, .
\end{displaymath}
Hence 
\begin{displaymath}
I_2 = \e^{i F(c)} \int_{0}^\eta 2 \exp \bigg( \frac{i}{2} \, x^2 F '' (c) \bigg) \, dx + S 
\end{displaymath}
with 
\begin{displaymath}
|S| \leq \lambda_3 \int_{0}^\eta \frac{x^3}{3} \, dx = \frac{\, \eta^4}{12} \, \lambda_3 \, .
\end{displaymath}

Finally, set 
\begin{displaymath}
K = \int_{0}^\eta 2 \exp \bigg( \frac{i}{2} \, x^2 F '' (c) \bigg) \, dx \, .
\end{displaymath}
We make the change of variable $x = \sqrt{\frac{2}{F '' (c)}} \, \sqrt{t}$. Recall that 
$\int_{0}^\infty  \frac{\e^{i t}}{\sqrt{t}} dt = \sqrt{\pi} \, \e^{i \pi / 4}$ is the classical Fresnel integral, and that an 
integration by parts gives, for $m > 0$: 
\begin{displaymath} 
\bigg| \int_{m}^\infty  \frac{\e^{i t}}{\sqrt{t}} \, dt \bigg|  \leq \frac{2}{\sqrt m} \, \cdot
\end{displaymath}
Therefore, with $m = \frac{\, \eta^2}{2} \, F '' (c)$:
\begin{displaymath}
K = \sqrt{\frac{2}{F '' (c)}} \int_{0}^m  \frac{\e^{i t}}{\sqrt{t}} \, dt 
= \sqrt{\frac{2 \pi}{F '' (c)}} \, \e^{i \pi/4} + R_m \, ,
\end{displaymath}
with
\begin{displaymath}
|R_m| \leq C \, \sqrt{\frac{1}{F '' (c)}} \, \frac{1}{\sqrt{m}} \leq \frac{C}{\eta \lambda_2} \, \cdot
\end{displaymath}

All in all, we proved that 
\begin{equation} \label{aia} 
I_2 = \sqrt{\frac{2 \pi}{F '' (c)}} \, \exp \big[ i (F (c) + \pi / 4) \big] + O\, \bigg( \frac{1}{\eta \lambda_2} + \eta^4 \lambda_3 \bigg) \, \cdot
\end{equation}
and the same estimate holds for $I$, thanks to \eqref{ein} and \eqref{zwei}.
\smallskip

We have hence proved Proposition~\ref{tihebr}. 
\end{proof}
%

\section {Some results on multipliers} \label{sec: multipliers} 

The set $\mathcal{M} \big( H^2 (\beta) \big)$ of multipliers of $H^2 (\beta)$ is by definition the vector space of functions $h$ analytic on $\D$ and such that 
$h f \in H^2 (\beta)$ for all $f \in H^2 (\beta)$. When $h \in \mathcal{M} \big( H^2 (\beta) \big)$, the operator $M_h$ of multiplication by $h$ is bounded on 
$H^2 (\beta)$ by the closed graph theorem. The space $\mathcal{M} \big( H^2 (\beta) \big)$ equipped with the operator norm is a Banach space. We note the 
obvious property:
\begin{equation} \label{quinque} 
\mathcal{M} \big( H^2 (\beta) \big) \hookrightarrow H^\infty \quad \text{contractively.} 
\end{equation}
Indeed, if $h \in \mathcal{M} \big( H^2 (\beta) \big)$, we easily get for all $w \in \D$:
\begin{displaymath} 
M_{h}^{\ast} (K_w) = \overbar{h(w)} K_w \, ;
\end{displaymath} 
so that taking norms and simplifying, we are left with $|h (w)|\leq \| M_h \|$, showing that $h \in H^\infty$ with $\| h \|_\infty \leq  \| M_h \|$. 
\goodbreak

\begin{proposition} \label{prop multipliers}
We have ${\cal M} \big( H^2 (\beta) \big) = H^\infty$ isomorphically if and only if $\beta$ is essentially decreasing.
\end{proposition}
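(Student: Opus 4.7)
The plan is to exploit the always-valid contractive embedding $\mathcal{M}(H^2(\beta)) \hookrightarrow H^\infty$ recorded in \eqref{quinque} and to determine when the reverse inclusion holds with a uniform bound.

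Necessity is immediate. Suppose $\mathcal{M}(H^2(\beta)) = H^\infty$ with equivalent norms, so that $\|M_h\| \leq K\|h\|_\infty$ for some constant $K$ and every $h \in H^\infty$. Apply this to the monomial $h = z^n$ (of sup-norm $1$) and test $M_{z^n}$ against the orthonormal basis vector $e_k^\beta$:
\[
\sqrt{\beta_{n+k}/\beta_k} \;=\; \|M_{z^n}\, e_k^\beta\| \;\leq\; K,
\]
so $\beta_m \leq K^2 \beta_k$ whenever $m \geq k$, which is exactly the essentially decreasing property.

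For the sufficiency I would first reduce to the case where $\beta$ is non-increasing, using the equivalent weight $\tilde\beta_n = \sup_{m \geq n} \beta_m$ from Section~\ref{sec: def}: this leaves $H^2(\beta)$ unchanged as a set and modifies the norm and the multiplier constants only by the equivalence factor $C$. With $\beta$ non-increasing, a direct computation on the orthonormal basis $(e_k^\beta)$ gives $S e_k^\beta = \sqrt{\beta_{k+1}/\beta_k}\, e_{k+1}^\beta$, so the shift $S : f \mapsto z f$ is a contraction on $H^2(\beta)$. Von Neumann's inequality for a single Hilbert space contraction then yields $\|p(S)\| \leq \|p\|_\infty$ for every polynomial $p$; since $p(S) = M_p$, this is the uniform bound $\|M_p\| \leq \|p\|_\infty$ on polynomial multipliers.

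To extend the bound from polynomials to arbitrary $h \in H^\infty$, I would approximate $h$ by its Ces\`aro means $\sigma_N h$, which are polynomials satisfying $\|\sigma_N h\|_\infty \leq \|h\|_\infty$ and converging pointwise on $\D$ to $h$. For fixed $f$ a polynomial in $H^2(\beta)$, the products $(\sigma_N h)\, f$ form a bounded sequence in $H^2(\beta)$ converging coefficientwise, hence weakly in $H^2(\beta)$, to the analytic product $hf$; lower semi-continuity of the norm under weak limits then gives $hf \in H^2(\beta)$ with $\|hf\| \leq \|h\|_\infty\,\|f\|$, and density of polynomials in $H^2(\beta)$ concludes. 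The one step requiring some care is this final passage to the limit --- justifying the weak convergence and applying a Fatou-type argument --- but no additional hypothesis on $\beta$ enters; the conceptual core of the sufficiency is simply the observation that, for non-increasing $\beta$, the shift is a Hilbert space contraction, so von Neumann's inequality applies verbatim.
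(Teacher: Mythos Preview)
Your proof is correct. The necessity argument is exactly the paper's: test the monomial multipliers on basis vectors to force $\beta_m \leq C^2 \beta_n$ for $m \geq n$.

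For the sufficiency the paper does not argue in-line at all; it simply refers to \cite[Proposition~3.16]{LLQR-comparison}. Your route --- reduce to non-increasing $\beta$, observe that the shift is then a Hilbert-space contraction, and apply von Neumann's inequality to get $\|M_p\| \leq \|p\|_\infty$ for polynomials --- is a clean self-contained alternative that makes the mechanism transparent. The passage from polynomials to arbitrary $h \in H^\infty$ via Ces\`aro means is fine; if you want to avoid the slightly delicate phrase ``coefficientwise, hence weakly'' (which tacitly uses that the limit already lies in the space), you can bypass weak convergence entirely and apply Fatou's lemma directly to the coefficient sums:
\[
\sum_{m \geq 0} \beta_m \, |\widehat{hf}(m)|^2
= \sum_{m \geq 0} \beta_m \lim_{N} |\widehat{(\sigma_N h) f}(m)|^2
\leq \liminf_{N} \, \|(\sigma_N h) f\|^2
\leq \|h\|_\infty^2 \, \|f\|^2 .
\]
This gives $hf \in H^2(\beta)$ with the desired bound in one line, and density of polynomials finishes as you say.
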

\begin{proof}
The sufficient condition is proved in \cite[beginning of the proof of Proposition~3.16]{LLQR-comparison}. For the necessity, we have 
$\| M_h \| \approx \| h \|_\infty$ for every $h \in H^\infty$. Now, for $m > n$ (recall that $e_n (z) = z^n$):
\begin{displaymath}
e_m (z) = z^{m - n} z^n = (M_{e_{m - n}} e_n) (z) \, ;
\end{displaymath}
so, since $\| M_{e_{m - n}} \| \leq C \, \| e_{m - n} \|_\infty = C$ for some positive constant $C$:
\begin{displaymath}
\beta_m = \| e_m \|^2 \leq C^2 \, \| e_n \|^2 = C^2 \, \beta_n \, . \qedhere
\end{displaymath}
\end{proof}

In \cite[Section~3.6]{LLQR-comparison}, we gave the following notion of \emph{admissible} Hilbert space of analytic functions.

\begin{definition}
A Hilbert space $H$ of analytic functions on $\D$, containing the constants, and with reproducing kernels $K_a$, $a \in \D$, is said 
\emph{admissible} if: 
\begin{enumerate}
\setlength\itemsep {-0.05 em}

\item [$(i)$]  $H^2$ is continuously embedded in $H$;  

\item [$(ii)$] $\mathcal{M} (H) = H^\infty$; 

\item [$(iii)$] the automorphisms of $\D$ induce bounded composition operators on $H$;

\item [$(iv)$] $\displaystyle \frac{\Vert K_a\Vert_H}{\Vert K_b\Vert_H} \leq h \bigg(\frac{1 - |b|}{1 - |a|} \bigg)$ for $a, b \in \D$, where 
$h \colon \R^+\to \R^+$ is an non-decreasing function.
\end{enumerate}
\end{definition}

We proved in that paper that every weighted Hilbert space $H^2 (\beta)$ with $\beta$ non-increasing is admissible, under the additional hypothesis 
that the automorphisms of $\D$ induce bounded composition operators. In view of Theorem~\ref{main theorem}, we get the following result.
\goodbreak

\begin{proposition} \label{prop admissible}
Let $\beta$ be essentially decreasing that satisfies the $\Delta_2$-condition. Then $H^2 (\beta)$ is admissible. 
\end{proposition}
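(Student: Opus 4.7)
The plan is to verify the four defining conditions $(i)$--$(iv)$ of admissibility for $H = H^2(\beta)$. The strategy is to reduce to the non-increasing case, where three conditions follow from results already proven in this paper and the fourth from the earlier work \cite{LLQR-comparison}.

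First I would replace $\beta$ by its non-increasing envelope $\tilde\beta_n = \sup_{m \geq n} \beta_m$. Since $\beta$ is essentially decreasing, we have $\beta_n \leq \tilde\beta_n \leq C\beta_n$, so $H^2(\beta)$ and $H^2(\tilde\beta)$ are identical as vector spaces with equivalent norms. The $\Delta_2$-condition passes to $\tilde\beta$: from $\beta_{2n} \geq \delta\beta_n$ one deduces $\tilde\beta_{2n} \geq \beta_{2n} \geq \delta\beta_n \geq (\delta/C)\tilde\beta_n$. Moreover, under an equivalent Hilbertian norm the reproducing kernel norms are comparable (each equals the norm of the evaluation functional at the point), so conditions $(i)$--$(iv)$ transfer between $H^2(\beta)$ and $H^2(\tilde\beta)$. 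Thus it suffices to prove the proposition when $\beta$ is non-increasing and satisfies $\Delta_2$.

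Under this assumption, the first three conditions are essentially one-line citations: $(i)$ holds because $\beta_n \leq \beta_0$ forces $H^2 \hookrightarrow H^2(\beta)$ continuously, as recalled in Section~\ref{sec: intro}; $(ii)$ is Proposition~\ref{prop multipliers} applied to an essentially decreasing $\beta$; and $(iii)$ is the Main Theorem~\ref{main theorem}, which yields boundedness of \emph{all} composition operators, in particular those induced by automorphisms of $\D$. For $(iv)$ I would invoke the result of \cite{LLQR-comparison} cited in the paragraph preceding the proposition: for a non-increasing weight such that all automorphisms of $\D$ induce bounded composition operators, $H^2(\beta)$ is admissible. Condition $(iii)$ supplies precisely the hypothesis needed, so this produces the required non-decreasing function $h$ with $\|K_a\|_H / \|K_b\|_H \leq h\bigl((1-|b|)/(1-|a|)\bigr)$.

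The main (mild) obstacle is the opening reduction: one must check that $\tilde\beta$ inherits both essentially decreasing and $\Delta_2$ from $\beta$, and that condition $(iv)$ is genuinely preserved under passage to an equivalent Hilbertian norm (which is what allows one to replace $\beta$ by $\tilde\beta$ without loss of admissibility). Once the reduction is made, the proposition is a concatenation of Proposition~\ref{prop multipliers}, Theorem~\ref{main theorem}, and the admissibility result for non-increasing weights in \cite{LLQR-comparison}.
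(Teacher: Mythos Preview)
Your proposal is correct and in fact matches the argument the paper sketches immediately \emph{before} stating the proposition: reduce to non-increasing $\beta$, obtain $(i)$--$(iii)$ from the embedding, Proposition~\ref{prop multipliers}, and Theorem~\ref{main theorem}, and then invoke the admissibility result of \cite{LLQR-comparison} for non-increasing weights once $(iii)$ is in hand. The paper, however, chooses to write out a \emph{different} proof of $(iv)$: rather than cite \cite{LLQR-comparison}, it proves directly the key inequality $\|K_r\|^2 \leq C\,\|K_{r^2}\|^2$ using only the $\Delta_2$-condition (splitting the kernel series into even and odd indices and applying $\beta_{2n}\geq M^{-1}\beta_n$), and then iterates to get $\|K_r\|/\|K_s\| \leq D^{\alpha/2}\bigl((1-s)/(1-r)\bigr)^{\alpha/2}$. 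Your route is shorter and perfectly legitimate; the paper's route has the virtue of being self-contained and of making the dependence of $h$ on the $\Delta_2$-constant explicit.
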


Let us give a different proof.

\begin{proof}
Because $\beta$ is essentially decreasing, item $(i)$ holds, as well as item $(ii)$, by Proposition~\ref{prop multipliers}. Item $(iii)$ is 
Theorem~\ref{main theorem}. It remains to show $(iv)$. We may assume that $\beta$ is non-increasing.

Let $0 < s < r < 1$. 

Without loss of generality, we may assume that $r, s \geq 1 / 2$. It is enough to prove:
\begin{equation} \label{enough}
\| K_r \|^2 \leq C \, \| K_{r^2} \|^2
\end{equation}
for some constant $C > 1$. Indeed, iteration of \eqref{enough} gives:
\begin{displaymath}
\| K_r \|^2 \leq C^k \, \| K_{r^{2^k} } \|^2
\end{displaymath}
and if $k$ is the smallest integer such that $r^{2^k} \leq s$, we have $2^{k - 1} \log r > \log s$ and $2^k \leq D \, \frac{1 - s}{1 - r}$ 
where $D$ is a numerical constant. Writing $C = 2^\alpha$ with $\alpha > 1$, we obtain:
\begin{displaymath}
\bigg( \frac{\| K_r \|}{\| K_s \|} \bigg)^2 \leq C^k = (2^k)^\alpha \leq D^\alpha \bigg( \frac{1 - s}{1 - r} \bigg)^\alpha \, .
\end{displaymath}

To prove \eqref{enough}, we pick some $M > 1$ such that $\beta_{2 n} \geq M^{- 1} \beta_n$ for all $n \geq 1$ and write $t = r^2$. We have:
\begin{displaymath}
\| K_r \|^2 = \frac{1}{\beta_0} + \sum_{n = 1}^\infty \frac{t^{2 n}}{\beta_{2 n}} 
+ \sum_{n = 1}^\infty \frac{t^{2 n - 1}}{\beta_{2 n - 1}} \, \raise 1 pt \hbox{,}
\end{displaymath}
implying, since $\beta_{2 n - 1} \geq \beta_{2 n} \geq M^{- 1} \beta_n$ and $t^{2 n - 1} \leq 4 \, t^{2 n}$:
\begin{displaymath}
\| K_r \|^2 \leq \frac{1}{\beta_0} + M \sum_{n = 1}^\infty \frac{t^{2 n}}{\beta_n} 
+ 4 M \sum_{n = 1}^\infty \frac{t^{2 n}}{\beta_n} \leq 5 M \| K_t \|^2 \, . \qedhere
\end{displaymath}
\end{proof}

The notion of admissible Hilbert space $H$ is useful for the set of conditional multipliers:
\begin{displaymath} 
\mathcal{M} (H, \phi) = \{w \in H \tq w \, (f \circ \phi) \in H \text{ for all } f \in H \} \, . 
\end{displaymath} 
As a corollary of \cite[Theorem~3.18]{LLQR-comparison} we get:
\begin{corollary}
Let $\beta$ be essentially decreasing and satisfying the $\Delta_2$-condition. Then:
\begin{enumerate}
\setlength\itemsep {-0.05 em}

\item [$1)$] $\mathcal{M} (H^2, \phi) \subseteq \mathcal{M} \big(H^2 (\beta), \phi \big)$; 

\item [$2)$] $\mathcal{M} \big(H^2 (\beta), \phi \big) = H^2 (\beta)$ if and only if $\| \phi \|_\infty < 1$;
 
\item [$3)$] $\mathcal{M} \big(H^2 (\beta), \phi \big) = H^\infty$ if and only if $\phi$ is a finite Blaschke product.
\end{enumerate}
\end{corollary}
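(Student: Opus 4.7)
The proof plan is essentially bookkeeping: I would apply \cite[Theorem~3.18]{LLQR-comparison} to $H=H^2(\beta)$ once I have verified that $H^2(\beta)$ is admissible under the standing hypotheses.

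Step one is the admissibility check, and this is exactly Proposition~\ref{prop admissible} above. That proposition collects four ingredients: $(i)$ the continuous embedding $H^2\hookrightarrow H^2(\beta)$, which is immediate from the essentially decreasing hypothesis; $(ii)$ the identification $\mathcal{M}(H^2(\beta))=H^\infty$, which is Proposition~\ref{prop multipliers}; $(iii)$ the boundedness of all automorphism-induced composition operators on $H^2(\beta)$, which is Theorem~\ref{main theorem}; and $(iv)$ the reproducing-kernel comparison $\|K_a\|/\|K_b\|\leq h\bigl((1-|b|)/(1-|a|)\bigr)$, extracted from the $\Delta_2$-condition via the geometric-iteration argument at the end of the proof of Proposition~\ref{prop admissible}.

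Step two is to quote \cite[Theorem~3.18]{LLQR-comparison}, whose three conclusions for an admissible Hilbert space are precisely items $1)$, $2)$, and $3)$ of the corollary. Because that reference already contains a proof at the level of general admissible spaces, nothing new is required at this step.

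Consequently there is no real obstacle: the analytic work has been front-loaded into Proposition~\ref{prop admissible} and into \cite[Theorem~3.18]{LLQR-comparison}. If one instead wanted a self-contained proof, the less routine point would be the necessity directions in $2)$ and $3)$, where one must exhibit, for each failing $\phi$, an $f\in H^2(\beta)$ and a $w\in H^2(\beta)$ (respectively $w\in H^\infty$) with $w\,(f\circ\phi)\notin H^2(\beta)$; the natural test functions are powers of $(1-\overline{w}z)^{-1}$ with $w$ approaching a boundary point of $\phi(\D)$, and axiom $(iv)$ of admissibility is precisely what forces the required norm blow-up after composition.
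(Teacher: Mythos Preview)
Your proposal is correct and matches the paper's own treatment exactly: the paper simply states the corollary ``as a corollary of \cite[Theorem~3.18]{LLQR-comparison}'' immediately after establishing admissibility in Proposition~\ref{prop admissible}, with no further argument given. Your breakdown of the admissibility verification into items $(i)$--$(iv)$ and the subsequent invocation of the external theorem is precisely the intended logic.
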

%

\section{Miscellaneous remarks} \label{sec: remarks}

Some of the results of this paper can slightly be improved.

\subsection{Conditions on the weight $\beta$} \label{cond. beta}

First, we say that a sequence $(\beta_n)$ of positive numbers is \emph{slowly oscillating} if there if a function $\rho \colon (0, \infty) \to (0, \infty)$ 
that is bounded on each compact subset of $(0, \infty)$ for which:
\begin{displaymath} 
\frac{\beta_m}{\beta_n} \leq \rho \, \bigg( \frac{m}{n} \bigg) \, \cdot
\end{displaymath} 

This clearly amounts to say that, for some positive constants $C_1 < C_2$, we have:
\begin{displaymath} 
\qquad\quad C_1 \leq \frac{\beta_m}{\beta_n} \leq C_2 \quad \text{when } n / 2 \leq m \leq 2 n \, .
\end{displaymath} 

Every essentially decreasing sequence with the $\Delta_2$-condition is slowly oscillating. 

\begin{proposition}
The following holds:
\smallskip

$1)$ every slowly oscillating sequence has a polynomial minoration;
\smallskip

$2)$ there are bounded sequences which are slowly oscillating, but not essentially decreasing. 
\end{proposition}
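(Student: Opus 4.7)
For the first claim, the plan is to iterate the doubling estimate. From the equivalent form of slowly oscillating recalled just above, one has $\beta_{2 n} \geq C_1 \beta_n$ for every $n \geq 1$, whence by induction $\beta_{2^k} \geq C_1^k \beta_1$. For arbitrary $n \geq 2$, I would choose $k$ with $2^{k - 1} < n \leq 2^k$; then $n / 2^k \in (1/2, 1]$ and a second use of the hypothesis gives $\beta_n \geq C_1 \beta_{2^k} \geq C_1^{k + 1} \beta_1$. If $C_1 \geq 1$ the minoration is trivial; otherwise, writing $C_1 = 2^{- \alpha}$ with $\alpha > 0$ and using $k + 1 \leq \log_2 n + 2$, one concludes $\beta_n \geq \beta_1 \, C_1^{\, 2} \, n^{- \alpha}$.

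For the second claim, I would build $\beta$ on the logarithmic scale, using widely spaced ``tent-dips'' of growing depth. Set $x_k = 4 k^2$ and define $h \colon [0, \infty) \to (- \infty, 0]$ by
\[ h (x) = - \sup_{k \geq 1} (k - |x - x_k|)_+ \, . \]
Since $x_{k + 1} - x_k = 8 k + 4 > 2 k + 1$, the tents $[x_k - k, x_k + k]$ are pairwise disjoint; hence $h$ is well defined, $1$-Lipschitz, bounded above by $0$, and identically $0$ outside their union, with $h (x_k) = - k$. Setting $\beta_n = \exp \bigl( h (\log n) \bigr)$ then produces a positive sequence in $(0, 1]$.

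It remains to check the two required properties. For slowly oscillating, if $n / 2 \leq m \leq 2 n$ then $|\log m - \log n| \leq \log 2$, and the $1$-Lipschitz property of $h$ yields $|\log (\beta_m / \beta_n)| \leq \log 2$, whence $1/2 \leq \beta_m / \beta_n \leq 2$. For the failure of essential decrease, letting $N_k$ be an integer nearest to $\e^{x_k}$, one has $\beta_{N_k} \to 0$ (of order $\e^{- k}$), while beyond each $N_k$ one easily finds integers $n$ whose logarithm avoids every tent, giving $\beta_n = 1$; hence $\sup_{n \geq N_k} \beta_n / \beta_{N_k} \to \infty$. The only minor point to verify is that the tents stay disjoint and leave room for integers with $\beta_n = 1$ between successive dips, both of which follow at once from the quadratic growth of $x_k$.
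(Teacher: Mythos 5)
Your proof is correct. Part 1) is the same dyadic iteration as in the paper. For part 2), your construction is, at bottom, the same mechanism as the paper's but packaged differently: the paper takes $a_k = 4^{k^2}$, $b_k = \sqrt{a_k a_{k+1}}$ and sets $\beta_n = a_k/n$ on $[a_k, b_k)$ and $\beta_n = n/a_{k+1}$ on $[b_k, a_{k+1})$, which is precisely $\exp\bigl(h(\log n)\bigr)$ for a piecewise linear, $1$-Lipschitz $h$ with V-shaped dips of depth $\tfrac{1}{2}\log (a_{k+1}/a_k)$ at $\log b_k$; you instead write down the Lipschitz tent function $h$ directly, with dips of depth $k$ centred at $x_k = 4k^2$ and flat plateaus (where $\beta_n = 1$) in between. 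What your formulation buys is a one-line verification of the slow oscillation: $|\log m - \log n| \leq \log 2$ plus the $1$-Lipschitz property replaces the paper's case-by-case computation of the ratios $\beta_m/\beta_n$ across the two regimes; the failure of essential decrease is checked the same way in both proofs (a deep dip followed by a return to height comparable to $1$). Two trivial points you may as well make explicit: set $\beta_0 = 1$ (your formula $\exp\bigl(h(\log n)\bigr)$ only makes sense for $n \geq 1$), and note that an integer $n > N_k$ with $\log n$ in the gap $(x_k + k,\, x_{k+1} - k - 1)$ exists because that gap has length $6k + 3$ and so the corresponding interval of real numbers has length far greater than $1$ — which is what your appeal to the quadratic growth of $x_k$ amounts to.
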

\begin{proof}
$1)$ is clear, because if $2^j \leq n < 2^{j + 1}$, then
\begin{displaymath} 
\beta_n \geq C^{- 1} \beta_{2^j} \geq C^{- j - 1} \beta_1 \geq C^{- 1} \beta_1 \, n^{- \alpha} \, ,
\end{displaymath} 
with $\alpha = \log C / \log 2$.

$2)$ We define $\beta_n$ as follows. Let $(a_k)$ be an increasing sequence of positive square integers such that 
$\lim_{k \to \infty} a_{k + 1} / a_k = \infty$, for example $a_k = 4^{k^2}$, and let $b_k = \sqrt{a_k a_{k + 1}}\,$; with our choice, this is an integer and 
we clearly have $a_k < b_k < a_{k + 1}$. We set:
\begin{align*}
\beta_n = 
\left\{ 
\begin{array} {ll}
\, a_k / n & \text{for } a_k \leq n < b_k \\
& \\
(a_k / b_k^2) \, n = (1 / a_{k + 1}) \, n & \text{for } b_k \leq n < a_{k + 1} \, .
\end{array}
\right.
\end{align*}
This sequence $(\beta_n)$ is slowly oscillating by construction. Indeed, it suffices to check that for $a_k \leq n / 2 < b_k \leq n < a_{k + 1}$, the quotient 
$\beta_m / \beta_n$ remains lower and upper bounded when $n / 2 \leq m \leq n$. But for $n / 2 \leq m < b_k$, we have
\begin{displaymath} 
\frac{\beta_m}{\beta_n} = \frac{a_k / m}{n / a_{k + 1}} = \frac{a_k a_{k + 1}}{m n} = \frac{b_k^2}{m n} \, \raise 1 pt \hbox{,}
\end{displaymath} 
which is $\leq 2 \, b_k^2 / n^2 \leq 2$ and $\geq b_k^2 / n^2 \geq (n / 2)^2 / n^2 = 1 / 4$; and for $b_k \leq m$, we have
\begin{displaymath} 
\frac{\beta_m}{\beta_n} = \frac{m / a_{k + 1}}{n / a_{k + 1}} = \frac{m}{n} \in [ 1 /2, 1] \, .
\end{displaymath} 

However, though $(\beta_n)$ is bounded, since $\beta_n \leq 1$ for $a_k \leq n < b_k$ and, 
for $b_k \leq n < a_{k + 1}$, 
\begin{displaymath} 
\beta_n \leq \beta_{a_{k + 1} - 1} = \frac{1}{a_{k + 1}}\, (a_{k + 1} - 1) \leq 1 \, ,
\end{displaymath} 
it is not essentially decreasing, since
\begin{displaymath} 
\frac{\beta_{a_{k + 1} - 1}}{\beta_{b_k}} = \frac{1}{\sqrt{a_k a_{k + 1}}} \, (a_{k + 1} - 1)  \sim \sqrt{\frac{a_{k + 1}}{a_k}} 
\converge_{k \to \infty} \infty \, . \qedhere
\end{displaymath} 
\end{proof}

By a slight modification (change the value of the constants in the definition of $\beta_n$), we could obtain a sequence which is slowly oscillating, tends to zero, 
yet again not essentially decreasing.
\smallskip

Now, Theorem~\ref{main theorem} admits the following variant.

\begin{theorem} \label{main bis}
Let $(\beta_n)$ be a sequence of positive numbers which is bounded above and slowly oscillating. Then all symbols that extend analytically in a neighborhood 
of $\overbar{\D}$ induce a bounded composition operator on $H^2 (\beta)$.
\end{theorem}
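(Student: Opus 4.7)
The plan is to adapt the block-matrix strategy from the proof of Theorem~\ref{main theorem} and show that the matrix $A_\beta = \big( \sqrt{\beta_m / \beta_n}\, a_{m, n} \big)_{m, n \geq 0}$ representing $C_\phi$ on $H^2 (\beta)$, where $a_{m, n} = \hat{\phi^n} (m)$, defines a bounded operator on $\ell_2$. Four preliminary facts drive the argument. First, $C_\phi$ is always bounded on $H^2$ (Littlewood's subordination after conjugating by $T_{\phi (0)}$ if needed), so $A = (a_{m, n})$ is a bounded operator on $\ell_2$. Second, the analytic extension of $\phi$ to some $\tilde{R} \D$ with $\tilde{R} > 1$ gives $K < \infty$ with $|\phi (z)| \leq K$ on $|z| \leq \tilde{R}$; Cauchy's inequalities then yield $|a_{m, n}| \leq K^n \tilde{R}^{- m}$, which decays exponentially in $m$ when $m \gg n$. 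Third, excluding the trivial rotation case, for fixed $r \in (0, 1)$ we have $M (r) := \sup_{|z| = r} |\phi (z)| < 1$; writing $M (r) = r^\rho$ with $\rho > 0$ yields, as in Lemma~\ref{majo coeff}, the bound $|a_{m, n}| \leq r^{\rho n - m}$, giving exponential decay in $n$ when $m \ll n$. Fourth, by the preceding Proposition, slowly oscillating sequences satisfy the polynomial minoration $\beta_n \geq c\, n^{- \alpha}$; combined with $\sup \beta_n \leq M$, this gives $\beta_m / \beta_n \leq (M / c)\, n^\alpha$.

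Fix $N$ large and partition $\N$ into $I_k = [N^k, N^{k + 1})$. Decompose $A_\beta$ as the sum of five pieces corresponding to where $(m, n)$ lies: $D_\beta$ (blocks $I_k \times I_k$), $R_\beta$ (blocks $I_k \times I_{k + 1}$), $L_\beta$ (blocks $I_{k + 1} \times I_k$), $U_\beta$ (blocks $I_k \times I_j$ with $j \geq k + 2$), and $\tilde{U}_\beta$ (blocks $I_j \times I_k$ with $j \geq k + 2$). For the three near-diagonal pieces $D_\beta, R_\beta, L_\beta$, on each involved block the ratio $m / n$ lies in $[N^{- 2}, N^2]$, so iterating the slowly oscillating condition $O (\log N)$ times shows $\beta_m / \beta_n$ is uniformly bounded by some $C (N)$. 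Writing each block of $A_\beta$ as $B A|_{\text{block}} B^{- 1}$ with $B$ the diagonal matrix of entries $\sqrt{\beta_\ell}$, the block norm is at most $\sqrt{C (N)}\, \|A\|$, and mutual orthogonality of the subspaces $H_k = \text{span}\{e_n \tq n \in I_k\}$ (for $D_\beta$), or a bipartite variant thereof (for $R_\beta, L_\beta$), gives $\|D_\beta\| + \|R_\beta\| + \|L_\beta\| \leq C' (N)\, \|A\|$.

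The two remaining pieces are Hilbert-Schmidt. For $U_\beta$, points $(m, n)$ satisfy $m < n / N$, so choosing $N > 2 / \rho$ gives $\rho n - m \geq (\rho / 2)\, n$, and $\|U_\beta\|_{HS}^2 \leq (M / c) \sum_n n^{\alpha + 1} r^{\rho n} < \infty$. For $\tilde{U}_\beta$, points $(m, n)$ satisfy $m \geq N n$, so choosing $N$ further such that $K^2 \tilde{R}^{- 2 N} < 1$ yields $\|\tilde{U}_\beta\|_{HS}^2 \leq (M / c) \sum_n n^\alpha (K^2 \tilde{R}^{- 2 N})^n < \infty$. Therefore $A_\beta$ is the sum of five bounded pieces and is bounded, so $C_\phi$ is bounded on $H^2 (\beta)$. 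The main obstacle is the far-lower region $\tilde{U}_\beta$: in Theorem~\ref{main theorem} this region was handled using Kacnel'son's theorem, which requires $\beta$ non-increasing; here $\beta$ is only slowly oscillating and Kacnel'son is unavailable, so the polynomial factor $n^\alpha$ in $\beta_m / \beta_n$ must be absorbed by genuine exponential decay of $|a_{m, n}|$ in $m$, which is precisely what the analytic extension of $\phi$ past $\overline{\D}$ provides.
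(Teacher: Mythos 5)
Your proof is correct and follows essentially the same route as the paper: the paper also deduces Theorem~\ref{main bis} from the scheme of Theorem~\ref{main theorem}, using the Cauchy inequalities on a circle of radius $r<1$ (for $m\ll n$) and on a circle of radius $R>1$ supplied by the analytic extension (for $m\gg n$) to make the two far-from-diagonal pieces Hilbert--Schmidt after the polynomial minoration, and the slowly oscillating condition to control the near-diagonal band where Kacnel'son's theorem is no longer available (Lemma~\ref{lemma LU}). Your explicit block decomposition into $D_\beta$, $R_\beta$, $L_\beta$ with the orthogonality argument is exactly the content the paper compresses into the statement that $\big(\sqrt{\beta_m/\beta_n}\,\ind_E(m,n)\big)$ is a Schur multiplier of all bounded matrices, so you have merely spelled out a step the paper only sketches.
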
 

It is the case, for example, for finite Blaschke products. The proof follows that of Theorem~\ref{main theorem}, with the help of the following lemma.
\begin{lemma} \label{lemma LU}
Let $(\beta_n)$ be a sequence of positive numbers which is bounded above and slowly oscillating. 
Let $A = (a_{m, n})_{m, n}$ be the matrix of a bounded operator on $\ell_2$. Assume that, for constants $C_1 < 1$, $C_2 > 1$, and $c$, $b$, we have:
\smallskip

$1)$ $|a_{m, n}| \leq c \, \e^{- b n}$ for $m \leq C_1 n$;
\smallskip

$2)$ $|a_{m, n}| \leq c \, \e^{- b m}$ for $m \geq C_2 n$.

Then the matrix $\dis \tilde A = \bigg( a_{m, n} \, \sqrt{\frac{\beta_m}{\beta_n}} \bigg)$ also defines a bounded operator on $\ell_2$. 
\end{lemma}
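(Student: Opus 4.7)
The plan is to mimic the block-decomposition argument from the proof of Theorem~\ref{main theorem}, making two structural adjustments. First, the $\Delta_2$ bound on $\beta_m/\beta_n$ will be replaced by slow oscillation, which gives two-sided control on comparable indices. Second, the appeal to Kacnel'son's theorem will disappear entirely, because slow oscillation is symmetric: it bounds $\beta_m/\beta_n$ from both sides on each block, so the lower-triangular block is no harder to treat than the upper-triangular one.

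I would start by unpacking what the slow oscillation of $(\beta_n)$ gives. Iterating $\beta_m/\beta_n \leq C_2$ (valid for $n/2 \leq m \leq 2n$), one obtains that for every $\lambda > 0$ there exists a constant $K(\lambda)$ with $\beta_m/\beta_n \leq K(\lambda)$ whenever $\lambda^{-1} \leq m/n \leq \lambda$. The same iteration, combined with the first part of Proposition~\ref{simple proposition polynomial minoration}, yields a polynomial minoration $\beta_n \geq c\, n^{-\alpha}$. Because $\beta$ is also bounded above, this gives the uniform bound $\sqrt{\beta_m/\beta_n} \leq C'\, n^{\alpha/2}$ for all $m, n \geq 1$.

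Next, I would fix an integer $N \geq C$ and set $I_0 = [0, N)$, $I_k = [N^k, N^{k+1})$ for $k \geq 1$. Writing $P_k$ for the orthogonal projection onto $\mathrm{span}\{e_j \tq j \in I_k\}$, I decompose $\tilde A = \tilde D + \tilde R + \tilde S + \tilde U$ exactly as in the proof of Theorem~\ref{main theorem}: $\tilde D$, $\tilde R$, $\tilde S$ are supported on $\bigcup_k (I_k \times I_k)$, $\bigcup_k (I_k \times I_{k+1})$, $\bigcup_k (I_{k+1} \times I_k)$ respectively, and $\tilde U$ is the rest. For $(m,n)$ in the support of any of the first three pieces, $m/n \in [N^{-2}, N^2]$, so the factor $\sqrt{\beta_m/\beta_n}$ is bounded by $\sqrt{K(N^2)}$. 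The orthogonality of the blocks $P_k$ (mirroring \eqref{DR} and \eqref{D_beta}) then yields
\[
\|\tilde D\|,\ \|\tilde R\|,\ \|\tilde S\| \,\leq\, \sqrt{K(N^2)}\,\|A\|.
\]

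For the remainder $\tilde U$, any nonzero entry at $(m,n)$ forces $m \in I_j$, $n \in I_k$ with $|j - k| \geq 2$, so that either $n/m \geq N \geq C$ or $m/n \geq N \geq C$. Hypothesis 1) or 2) then delivers exponential decay of $|a_{m,n}|$ in $\max(m,n)$, which easily dominates the polynomial factor $n^{\alpha/2}$. A straightforward summation gives a Hilbert--Schmidt bound of the form
\[
\|\tilde U\|_{HS}^2 \,\leq\, C'' \sum_{n \geq 1} n^{\alpha + 1}\, \e^{-2 b n} \,<\, \infty,
\]
the extra factor $n$ accounting for the number of admissible $m$. Combining the four estimates, $\tilde A$ is bounded. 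The only delicate point is calibrating $N$ so that the support of $\tilde U$ sits inside the decay region of hypotheses 1) and 2); once $N \geq C$ is chosen, the remainder is the routine block-diagonal bookkeeping already carried out in Theorem~\ref{main theorem}, only simpler, since slow oscillation absorbs the roles of both the $\Delta_2$-condition and essential decreasingness.
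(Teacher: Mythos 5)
Your proposal is correct and follows essentially the paper's own (sketched) argument: the off-band part $\tilde U$ is estimated in Hilbert--Schmidt norm using the exponential decay of hypotheses $1)$ and $2)$ against the bound $\beta_m/\beta_n \lesssim n^{\alpha}$ coming from boundedness and polynomial minoration, while the band $C^{-1}n \lesssim m \lesssim C n$ is controlled because $\beta_m/\beta_n$ stays bounded there, which you implement by the same block decomposition and diagonal-conjugation trick as in Theorem~\ref{main theorem} (this is exactly what the paper means by ``$\tilde A$ behaves like $A$ near the diagonal'', and, as you note, Kacnel'son's theorem is no longer needed since hypothesis $2)$ kills the far lower-triangular part). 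Only cosmetic points: the relevant reference for the polynomial minoration of a slowly oscillating sequence is part $1)$ of the proposition in Section~\ref{cond. beta} rather than Proposition~\ref{simple proposition polynomial minoration}, and in the region $m \geq C n$ the Hilbert--Schmidt sum should be summed in $m$ first (giving $\sum_n n^{\alpha}\e^{-2bCn}$-type terms) rather than counted as ``at most $n$ admissible $m$''.
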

\goodbreak
\begin{proof} [Sketch of proof] 
The matrix $\tilde A$ is Hilbert-Schmidt far from the diagonal since, with $ \lambda \geq \beta_n \geq \delta \, n^{ - \alpha}$, we have:
\begin{displaymath} 
\sum_{m < C_1 n} |a_{m, n}|^2 \beta_m / \beta_n \leq \sum_{m < C_1 n} \lambda \, \delta^{ - 1} n^\alpha |a_{m, n}|^2 
\lesssim \sum_{n \geq 1} n^{\alpha + 1} \e^{- b n} < \infty \, ,
\end{displaymath} 
and
\begin{displaymath} 
\sum_{m > C_2 n} |a_{m, n}|^2 \beta_m / \beta_n \leq \sum_{m > C_2 n} \lambda \, \delta^{ - 1} n^\alpha |a_{m, n}|^2 
\lesssim \sum_{n \geq 1} n^\alpha \bigg( \sum_{m > C_2 n} \e^{- b m} \bigg) \, . 
\end{displaymath} 
Since  $\beta_m / \beta_n$ remains bounded from above and below around the diagonal, the matrix $\tilde A$ behaves like $A$ near the diagonal.
\end{proof}

\noindent {\bf Remark.} The proof shows that, instead of $1)$ and $2)$, it is enough to have:
\begin{displaymath} 
\sum_{m < C_1 n} n^{\alpha + 1} |a_{m, n}|^2 < \infty \quad \text{and} \quad \sum_{m > C_2 n} m^\alpha |a_{m, n}|^2 < \infty \, .
\end{displaymath} 
Moreover the proof also shows that when $\beta$ is slowly oscillating, if we set $E = \{ (m, n) \tq C_1 n \leq m \leq C_2 n\}$, then the 
matrix $\big( \sqrt{\beta_m / \beta_n} \, \ind_E (m, n) \big)$ is a Schur multiplier over \emph{all} the bounded matrices, while Kacnel'son's theorem 
(Theorem~\ref{theo Kacnelson}) says that, if $\gamma = (\gamma_n)$ is non-increasing, the matrix $(\gamma_m / \gamma_n)$ is a Schur 
multiplier of all bounded \emph{lower-triangular} matrices. 

\begin{proof} [Proof of Theorem \ref{main bis}]
We first prove that the assumptions of Lemma \ref{lemma LU} are satisfied with $a_{m,n}=\widehat{\varphi^{n}}(m)$. This works  nearly as in 
Lemma~\ref{majo coeff}. 
For every symbol $\phi$, let $M (r) = \sup_{|z| = r} |\phi (z)|$ and write $M (1 / \e) = \e^{- \delta}$, with $\delta > 0$. 
Let $A$ be the matrix of $C_\phi$, with respect to the canonical basis of $H^2$. 
The Cauchy inequalities give, if $m \leq (\delta / 2) n$:
\begin{displaymath} 
|a_{m, n} | \leq [M (1 / \e)]^n \e^m \leq \e^{m - \delta n} \leq \e^{- \delta / 2 n}
\end{displaymath} 

When $\phi$ is analytic in a neighborhood of $\overbar {D (0, R)}$ with $R > 1$, the Cauchy inequalities give, writing $M (R) = \e^\rho$ and 
$R = \e^\delta$, for $m \geq C_2 n$, and a suitable constant $C_2$ (take $C_2 = (2\rho) / \delta$ for instance):
\begin{displaymath} 
|a_{m, n} | \leq \frac{[M (R)]^n}{R^m} \leq \e^{n \rho  - \delta m} \leq \e^{- (\delta /2) m } \, . 
\end{displaymath} 
We now conclude with Lemma~\ref{lemma LU}. 
\end{proof}
%

\subsection{Singular inner functions} \label{inner}

Let $a > 0$ and let $I_a$ be the singular inner function defined by 
\begin{equation} \label{a} 
I_{a} (z) = \exp\bigg( - a \, \frac{1 + z}{1 - z}\bigg) = \sum_{m = 0}^\infty c_{m} (a) z^m \, .
\end{equation}

If $n$ is a positive integer, we have $I_{n} (z) = [I_{1} (z)]^n$ and we write 
\begin{displaymath} 
I_{n} (z) = \sum_{m = 0}^\infty a_{m, n} z^m \, , \quad \text{with } a_{m, n} = c_{m} (n) \, .
\end{displaymath} 

We rely on the following lemma, familiar to  experts in orthogonal polynomials and special functions, but maybe not  so much as regards the uniformity, 
essential for our present purposes (see \cite{NS} or \cite{L}).  
\begin{lemma} \label{cruc2} 
It holds
\begin{equation} \label{coeff} 
c_{m} (n) = c \, n^{1/4} m^{- 3/4} \cos ( 2 \, \sqrt{2 \, n m}+\pi/4) + R_{m} (n) =: M_m (n) + R_{m}(n) \, ,
\end{equation} 
where $c = \pi^{- 1 / 2} 2^{1 / 4}$ and where  $|R_{m}(n)| \leq K \sqrt n \,  m^{- 5/4}$, with $K$ some numerical constant. 
\end{lemma}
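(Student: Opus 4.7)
The plan is to reduce the claim to an application of the stationary phase Theorem~\ref{tihebr}. Since $|I_n| \leq 1$ on $\overline{\D}\setminus\{1\}$, Cauchy's formula on $|z|=r<1$ followed by $r\to 1^{-}$, together with the boundary identity $(1+e^{i\theta})/(1-e^{i\theta}) = i\cot(\theta/2)$, yields
\begin{equation*}
c_m(n) = \frac{1}{2\pi}\int_{-\pi}^{\pi}\exp\bigl(-in\cot(\theta/2) - im\theta\bigr)\,d\theta = \frac{1}{\pi}\,\Re\!\int_0^\pi e^{iF(\theta)}\,d\theta,
\end{equation*}
with odd phase $F(\theta) = -n\cot(\theta/2) - m\theta$. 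The equation $F'(\theta_0) = \tfrac{n}{2}\csc^2(\theta_0/2) - m = 0$ has a unique solution $\theta_0 \in (0,\pi)$ exactly when $n \leq 2m$, namely $\sin(\theta_0/2) = \sqrt{n/(2m)}$; in the opposite regime $F' \gtrsim n$, so one integration by parts \`a la Lemma~\ref{spc} bounds the integral well below the target error.

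At this $\theta_0$, direct computation gives
\begin{equation*}
F(\theta_0) = -\sqrt{n(2m-n)} - 2m\arcsin\sqrt{n/(2m)} = -2\sqrt{2mn} + O\bigl(n^{3/2}/\sqrt{m}\bigr),
\end{equation*}
\begin{equation*}
|F''(\theta_0)| = m\sqrt{(2m-n)/n} \asymp \sqrt{m^3/n},
\end{equation*}
with $F''(\theta_0) < 0$. Applying Theorem~\ref{tihebr} to $-F$ on a subinterval $[\theta_0/2,\,\pi-\pi/4]$ surrounding $\theta_0$ (the excised pieces near the endpoints are easily shown negligible via integration by parts, since $|F'|$ is large there) and conjugating yields the leading contribution
\begin{equation*}
\frac{1}{\pi}\sqrt{\frac{2\pi}{|F''(\theta_0)|}}\cos\bigl(F(\theta_0) - \pi/4\bigr) = \frac{2^{1/4}}{\sqrt{\pi}}\,\frac{n^{1/4}}{m^{3/4}}\cos\bigl(2\sqrt{2mn} + \pi/4\bigr),
\end{equation*}
which is exactly $M_m(n)$ with the announced $c = \pi^{-1/2}\,2^{1/4}$.

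The main obstacle is squeezing out the uniform error $|R_m(n)| \leq K\sqrt{n}\,m^{-5/4}$. Two sources must be tracked simultaneously: (i) the error $O\bigl((\eta\lambda_2)^{-1} + \eta^4\lambda_3\bigr)$ in Theorem~\ref{tihebr}, where $\lambda_2 \asymp m^{3/2}/\sqrt{n}$ and $\lambda_3 \asymp m^2/n$; the crude optimization in $\eta$ gives only $O(n^{1/5}m^{-4/5})$, so one must invoke the sharper $O(\lambda_2^{-1}\lambda_3^{1/3})$ mentioned after the theorem; (ii) the phase-expansion error from replacing $F(\theta_0)$ by $-2\sqrt{2mn}$ in the cosine, of size $O(n^{7/4}/m^{5/4})$, which is where the factor $\sqrt{n}$ in the announced bound originates (and which makes the estimate meaningful only when $n$ is small compared to $m$). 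A cleaner route avoiding these bookkeeping issues is to exploit the identity $c_m(n) = e^{-n}\bigl(L_m(2n) - L_{m-1}(2n)\bigr)$, which follows from the Laguerre generating function $\sum_m L_m(x)z^m = (1-z)^{-1}e^{-xz/(1-z)}$, and then to difference two instances of the classical Plancherel--Rotach uniform asymptotic for Laguerre polynomials, as in \cite{NS} and \cite{L}.
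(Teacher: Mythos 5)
Your stationary-point computations are correct (the location $\sin(\theta_0/2)=\sqrt{n/(2m)}$, the value $F(\theta_0)=-\sqrt{n(2m-n)}-2m\arcsin\sqrt{n/(2m)}$, and $|F''(\theta_0)|=m\sqrt{(2m-n)/n}$), and your route is the one the paper mentions only in passing: the Fourier representation \eqref{foco} plus the van der Corput estimates. The paper's actual proof is the one-line application of Theorem~\ref{hong}, namely $c_m(n)=\e^{-n}L_m^{(-1)}(2n)$ combined with Fej\'er's formula for $\alpha=-1$, $x=2n$, plus the asserted uniformity in the parameter. The trouble is that your error bookkeeping does not close, so the proposal does not prove the bound $|R_m(n)|\le K\sqrt{n}\,m^{-5/4}$. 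First, even the sharper remainder $O(\lambda_2^{-1}\lambda_3^{1/3})$, with $\lambda_2\asymp m^{3/2}n^{-1/2}$ and $\lambda_3\asymp m^2 n^{-1}$, gives $O(n^{1/6}m^{-5/6})$, and $n^{1/6}m^{-5/6}\le K\sqrt{n}\,m^{-5/4}$ forces $m\lesssim n^{4/5}$ --- it fails exactly in the range $m\asymp n$ where Lemma~\ref{cruc2} is used in Theorem~\ref{alter}. Second, and more seriously, your own expansion $F(\theta_0)=-2\sqrt{2mn}+\tfrac{1}{3}n^{3/2}(2m)^{-1/2}+\cdots$ shows that replacing $F(\theta_0)$ by $-2\sqrt{2mn}$ perturbs the coefficient by about $n^{1/4}m^{-3/4}\cdot n^{3/2}m^{-1/2}=n^{7/4}m^{-5/4}$, which is \emph{not} $O(\sqrt{n}\,m^{-5/4})$ (it is off by the factor $n^{5/4}$); so the assertion that this step is ``where the $\sqrt{n}$ originates'' is an arithmetic slip, not a proof. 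Likewise the amplitude $\sqrt{2\pi/|F''(\theta_0)|}=\sqrt{2\pi}\,m^{-1/2}\bigl(n/(2m-n)\bigr)^{1/4}$ agrees with $2^{1/4}\pi^{1/2}n^{1/4}m^{-3/4}\cdot\sqrt{2/(2\pi)}$ only up to a relative error $\asymp n/m$, contributing $\asymp n^{5/4}m^{-7/4}$, again too large unless $m\gtrsim n^{3/2}$; your displayed ``identity'' equating the stationary-phase main term with $M_m(n)$ is therefore not an identity. A minor further point: in the regime $n\ge 2m$ the claim $F'\gtrsim n$ is false when $n$ is close to $2m$ (there $F'(\pi)=n/2-m$ and $F''(\pi)=0$), so that case cannot be dispatched by a single integration by parts.

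The ``cleaner route'' you sketch also has a gap. Writing $c_m(n)=\e^{-n}\bigl(L_m(2n)-L_{m-1}(2n)\bigr)$ is fine, but differencing two instances of the $\alpha=0$ asymptotics cannot yield the stated remainder: each instance carries an error $O(m^{-3/4})$ which does not cancel and is already of the size of the main term of $L_m^{(-1)}(2n)$, so at best you recover an upper bound, never an asymptotic with error $K\sqrt{n}\,m^{-5/4}$. The paper avoids this by applying Fej\'er's formula directly to $L_m^{(-1)}$ ($\alpha=-1$), whose fixed-$x$ remainder is $O(m^{-5/4})$; the only remaining issue, which the paper acknowledges, is the uniformity in $x=2n$, claimed to follow from a careful reading of Fej\'er's proof at the cost of the factor $\sqrt{n}$. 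Note also that the relevant formula is Fej\'er's fixed-$x$ asymptotic, not the Plancherel--Rotach asymptotic (that one concerns $x\asymp m$ and has a different phase and amplitude), so citing the latter would change the main term. In short, your toolbox (oscillatory integral, Theorem~\ref{tihebr}, or the Laguerre identity) is the right one, but the uniform-in-$n$ error analysis --- the whole content of the lemma --- is precisely what is missing from both of your routes as written.
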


We use  the following \cite[ p.~253]{NS} and \cite[p.~198]{SZ}, where $ L_{m}^{(\alpha)}$ denotes the generalized Laguerre polynomial of degree $m$ with 
parameter $\alpha$.

\begin{theorem} \label{hong}  
With the notation of \eqref{a}, we have
\begin{displaymath} 
c_{m} (a) = \e^{- a} \, L_{m}^{(-1)} (2 a) \, .
\end{displaymath} 

Moreover, we have, uniformly for $0 < \eps \leq x \leq M < \infty$:
\begin{displaymath} 
L_{m}^{(\alpha)} (x) = \pi^{- 1/2} \e^{x/2} x^{- \alpha/2 - 1 / 4} m^{\alpha/2 - 1/4} \cos \big( 2 \sqrt{m x} - \alpha \pi/2 - \pi/4 \big) + R_m (x)\, ,
\end{displaymath} 
where $|R_m (x)|\leq K_\alpha \, m^{\alpha/2 - 3/4}$,  and $K_\alpha > 0$ only depending on $\alpha$.
\end{theorem}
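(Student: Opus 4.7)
My plan has two essentially independent parts. \emph{First part: the identity.} I would exploit the classical Laguerre generating function
$$\sum_{m=0}^\infty L_m^{(\alpha)}(x)\, z^m \;=\; (1-z)^{-\alpha-1}\exp\!\Bigl(-\frac{xz}{1-z}\Bigr),\qquad |z|<1,$$
which at $\alpha=-1$ reduces to $\sum_{m\ge 0} L_m^{(-1)}(x)\, z^m = \exp(-xz/(1-z))$. The algebraic decomposition $-a(1+z)/(1-z) = -a - (2a)z/(1-z)$ lets me factor $e^{-a}$ out of $I_a$:
$$I_a(z) = e^{-a}\,\exp\!\Bigl(-\frac{(2a)z}{1-z}\Bigr) = e^{-a}\sum_{m=0}^\infty L_m^{(-1)}(2a)\, z^m,$$
and matching Taylor coefficients of $z^m$ yields at once the claimed formula $c_m(a) = e^{-a} L_m^{(-1)}(2a)$.

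\emph{Second part: the uniform asymptotic.} The natural route is the classical Fej\'er--Perron analysis, for which I would go through a Hilb-type comparison with Bessel functions. Starting from the Cauchy integral representation
$$L_m^{(\alpha)}(x) = \frac{1}{2\pi i}\oint_{|t|=r}\frac{e^{-xt/(1-t)}}{(1-t)^{\alpha+1}}\,\frac{dt}{t^{m+1}},\qquad 0<r<1,$$
I would perform a rescaling near $t=0$ designed to send the exponent $-xt/(1-t)$ to the Bessel-type form $(y/2)(u - 1/u)$ with $y = 2\sqrt{mx}$, producing the Hilb-type identity
$$L_m^{(\alpha)}(x) = e^{x/2}\, m^{\alpha/2}\, x^{-\alpha/2}\, J_\alpha\bigl(2\sqrt{mx}\bigr) + O\bigl(m^{\alpha/2-1}\bigr),$$
with a remainder uniform in $x$ on compact subsets of $(0,\infty)$. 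Inserting the classical large-argument Bessel expansion
$$J_\alpha(y) = \sqrt{\tfrac{2}{\pi y}}\,\cos\!\bigl(y - \alpha\pi/2 - \pi/4\bigr) + O\bigl(y^{-3/2}\bigr)$$
at $y = 2\sqrt{mx}$ collapses the prefactor $e^{x/2} m^{\alpha/2} x^{-\alpha/2}\sqrt{2/(\pi y)}$ to $\pi^{-1/2}\, e^{x/2}\, x^{-\alpha/2-1/4}\, m^{\alpha/2-1/4}$ and recovers the cosine $\cos(2\sqrt{mx}-\alpha\pi/2-\pi/4)$, giving the stated main term. The Bessel error $O(y^{-3/2})$ multiplied by $m^{\alpha/2}x^{-\alpha/2}$ contributes precisely a remainder of order $m^{\alpha/2-3/4}$, while the Hilb error $O(m^{\alpha/2-1})$ is of smaller order and is absorbed.

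The main obstacle is the uniformity of the bound $|R_m(x)| \le K_\alpha\, m^{\alpha/2-3/4}$ in $x\in[\varepsilon,M]$. Both endpoints of this range are essential: the lower bound $x\ge\varepsilon>0$ is needed so that the Bessel argument $2\sqrt{mx}$ grows with $m$ (otherwise the large-argument expansion is inapplicable), and geometrically it prevents the two saddle points of the Laguerre integrand, which sit near $t\simeq 1\mp i\sqrt{x/m}$, from coalescing; the upper bound $x\le M$ keeps these saddles separated from the essential singularity at $t=1$, ensuring that the rescaling and its Jacobian stay bounded. On the compact range $[\varepsilon,M]$ all the geometric data entering the construction (saddle locations, curvature of the phase, Bessel prefactors) depend continuously on $x$ within fixed positive bounds, which produces a single constant $K_\alpha$ independent of $x$. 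This is the classical Perron--Fej\'er expansion treated in Szeg\H o's monograph \cite{SZ} and in Nikishin--Sorokin \cite{NS}, which we invoke rather than reprove in full.
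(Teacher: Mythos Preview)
Your proposal is correct, and in fact goes further than the paper itself: the paper does not prove Theorem~\ref{hong} at all but simply quotes it from the literature, citing \cite[p.~253]{NS} for the identity $c_m(a)=\e^{-a}L_m^{(-1)}(2a)$ and \cite[p.~198]{SZ} for the Laguerre asymptotics. Your sketch (generating function for the first part, Hilb-type comparison with Bessel functions followed by the large-argument Bessel expansion for the second) is exactly the classical Fej\'er--Perron argument recorded in Szeg\H{o}'s monograph, so there is no divergence of method---you are spelling out what the paper merely cites.
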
 

Now, using Theorem~\ref{hong} with $\alpha = - 1$ and $a = n$, we get Lemma~\ref{cruc2}. 
Actually, to get Lemma~\ref{cruc2}, we need some uniformity with respect to $a$ in Theorem~\ref{hong}, which is not given by the above statement of 
that theorem; but provided we change $m^{- 5/4}$ into $\sqrt{n} \, m^{- 5/4}$ in $R_m$, a careful examination of Fej\'er's proof of Theorem~\ref{hong} 
shows that this uniformity holds. Alternatively, write $c_{m} (a)$ as a Fourier coefficient
\begin{equation} \label{foco} 
c_{m} (n) = \frac{1}{\pi} \int_{- \pi/2}^{\pi/2} \exp \big[ i (n \cot x + 2 \, m x) \big] \, dx 
\end{equation}
and use the previous  van der Corput estimates.
\smallskip

Once this lemma is at our disposal, we can prove again the following theorem.

\begin{theorem} \label{alter} 
Assume that $\beta$ is a non-increasing sequence and that the composition operator $C_{I_1}$ maps $H^{2} (\beta)$ to itself. Then $\beta$ satisfies the 
$\Delta_2$-condition. 
\end{theorem}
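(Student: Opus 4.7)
The plan is to mimic the argument of Theorem~\ref{theo necessity Delta-deux}, with the asymptotic expansion in Lemma~\ref{cruc2} playing the role of the lower bound in Lemma~\ref{main lemma}. After reducing to the case where $\beta$ is non-increasing and setting $M = \|C_{I_1}\|$, the boundedness hypothesis together with the identity $I_n = C_{I_1}(z^n)$ yields
\[
\sum_{m=0}^\infty |c_m(n)|^2 \beta_m = \|I_n\|_{H^2(\beta)}^2 \leq M^2 \beta_n.
\]
The aim is to locate, inside a window of the form $[n, 2n]$, a set $E$ of indices of cardinality $\gtrsim n$ on which $|c_m(n)| \gtrsim n^{-1/2}$, which, together with the monotonicity of $\beta$, will directly force $\beta_{2n} \gtrsim \beta_n$.

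Using Lemma~\ref{cruc2}, I would decompose $c_m(n) = M_m(n) + R_m(n)$ with main term $M_m(n) = c\,n^{1/4} m^{-3/4}\cos\theta_m$, $\theta_m := 2\sqrt{2nm}+\pi/4$, and remainder $|R_m(n)| \leq K\sqrt{n}\,m^{-5/4}$. For $m \in [n, 2n]$ the main term has amplitude of order $n^{-1/2}$, while $R_m(n)$ is only of order $n^{-3/4}$; hence the remainder is negligible compared to the main amplitude once $n$ is large, and it suffices to control the size of $\cos \theta_m$.

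The heart of the proof, and the step I expect to be the most delicate, is the analog of the phase-dispersion argument in Lemma~\ref{main lemma}: show that for some absolute $\delta_0 > 0$ and $\gamma > 0$, the set $E = \{m \in [n,2n] : |\cos\theta_m|\geq \delta_0\}$ has cardinality $\geq \gamma\,n$ for $n$ large. The key computation is
\[
\theta_{m+1} - \theta_m \approx \sqrt{2n/m} \in [1,\sqrt{2}] \quad \text{on } [n,2n],
\]
so the phases advance by bounded but bounded-below steps, with total variation $\theta_{2n}-\theta_n \approx (4-2\sqrt{2})n$. Hence $(\theta_m \bmod 2\pi)_{m\in [n,2n]}$ performs order $n$ full turns with nearly regular steps, and an elementary counting argument delivers a positive density of indices in the good arc $\{|\cos x|\geq \delta_0\}$. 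This is morally easier than in Lemma~\ref{main lemma} because the step sizes are already of order one on $[n,2n]$, without any need to first localize around a stationary point.

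Finally, for $m \in E$ and $n$ large, $|c_m(n)| \geq c\,\delta_0\,n^{1/4} m^{-3/4} - K\sqrt{n}\,m^{-5/4} \geq \delta\,n^{-1/2}$. Using that $\beta$ is non-increasing and $m \leq 2n$ for $m \in E$:
\[
M^2\beta_n \;\geq\; \sum_{m\in E} |c_m(n)|^2\beta_m \;\geq\; \delta^2 n^{-1}\,|E|\,\beta_{2n} \;\geq\; \delta^2 \gamma\,\beta_{2n},
\]
which gives $\beta_{2n}\geq (\delta^2\gamma/M^2)\beta_n$, i.e.\ the $\Delta_2$-condition. No iteration step is needed (in contrast with the $T_a$ case in Theorem~\ref{theo necessity Delta-deux}), since the choice of window $[n,2n]$ already places $2n$ within reach of the monotonicity bound.
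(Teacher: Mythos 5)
Your proposal is correct and follows essentially the same route as the paper: apply boundedness to $z^n$, use the Fej\'er-type asymptotics of Lemma~\ref{cruc2}, and run a phase-counting argument (steps $\theta_{m+1}-\theta_m$ bounded below and above by constants less than $\pi/2$) to produce a set of $\gtrsim n$ indices with $|c_m(n)|\gtrsim n^{-1/2}$, which is then fed into $\sum_m |c_m(n)|^2\beta_m \leq M^2\beta_n$ via monotonicity. The only, harmless, difference is your window $[n,2n]$ above $n$ instead of the paper's $[C^{-2}n,n]$ with $C\sqrt2<\pi/2$, which yields $\beta_{2n}\geq c\,\beta_n$ in one stroke rather than after the short iteration left implicit in the paper's last line.
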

\begin{proof}
Our assumption implies, with $M = \| C_{I_1} \|^2$:
\begin{equation} \label{imply} 
\sum_{m = 0}^\infty |a_{m, n}|^2 \beta_m \leq M \beta_n \, .
\end{equation}
We use Lemma~\ref{cruc2} with $C^{- 2} n \leq m < n$, where $C > 1$ satisfies $C \sqrt{2} < \pi / 2$, which is possible since $2 \sqrt{2} < \pi$. 
The term $R_m (n)$ is dominated by $K C^{- 1/2} m^{- 3/4}$.  Let $\theta_m = 2 \sqrt{2} \, \sqrt{n m} + \pi / 4$ be the argument of the number appearing 
in \eqref{coeff}. We have 
\begin{displaymath} 
\theta_{m + 1} - \theta_m = \frac{2 \sqrt{2} \, \sqrt{n}}{\sqrt m + \sqrt{m + 1}} \, ; 
\end{displaymath} 
hence
\begin{displaymath} 
\sqrt{2} \leq \theta_{m + 1} - \theta_m \leq C \, \sqrt{2} < \frac{\pi}{2} \, \cdot
\end{displaymath} 
The argument $\theta_m$ then varies regularly, 
and there is a positive constant $\delta$ and a subset $E$ of integers in the interval $[C^{- 2} n, n]$ such that 
$|E| \geq \delta \, n$ and 
\begin{displaymath} 
M_m (n) \geq 2 \, \delta \, n^{1/4} m^{- 3/4} \geq 2 \, \delta \, n^{- 1/2} 
\end{displaymath} 
for all $m \in E$. Therefore, for $n$ large enough, we have, for all $m \in E$:
\begin{displaymath} 
|a_{m, n}| \geq  2 \, \delta \, n^{- 1/2} - K \sqrt{n} \, \, m^{- 5/4} 
\geq 2 \, \delta \, n^{- 1/2}  - K C^{5/2} n^{- 3 / 4} \geq \delta \, n^{- 1 / 2} \, . 
\end{displaymath} 
With this information, \eqref{imply} gives:
\begin{displaymath} 
M \, \beta_n \geq \sum_{m \in E} |a_{m, n}|^2 \beta_m \geq \delta^2 n^{- 1} |E| \, \beta_{\lfloor C^{- 2} n \rfloor} 
\geq \delta^3 \beta_{\lfloor C^{- 2} n \rfloor} \, ,
\end{displaymath} 
where $\lfloor \, . \, \rfloor$ stands for the integer part, and this proves the theorem. 
\end{proof}
%

\bigskip

\noindent{\bf Acknowledgements.} We warmly thank R.~Zarouf for useful discussions and informations. 

L. Rodr{\'\i}guez-Piazza is partially supported by the project PGC2018-094215-B-I00 
(Spanish Ministerio de Ciencia, Innovaci\'on y Universidades, and FEDER funds). 
Parts of this paper was made when he visited the Universit\'e d'Artois in Lens and the Universit\'e de Lille in January 2020. It is his pleasure to 
thank all his colleagues in these universities for their warm welcome.

The third-named author was partly supported by the Labex CEMPI (ANR-LABX-0007-01). 

This work is also partially supported by the grant ANR-17-CE40-0021 of the French National Research Agency ANR (project Front).

\goodbreak

\smallskip

{\footnotesize
Pascal Lef\`evre \\
Univ. Artois, Laboratoire de Math\'ematiques de Lens (LML) UR~2462, \& F\'ed\'eration Math\'ematique des Hauts-de-France FR~2037 CNRS, 
Facult\'e Jean Perrin, Rue Jean Souvraz, S.P.\kern 1mm 18 
F-62\kern 1mm 300 LENS, FRANCE \\
pascal.lefevre@univ-artois.fr
\smallskip

Daniel Li \\ 
Univ. Artois, Laboratoire de Math\'ematiques de Lens (LML) UR~2462, \& F\'ed\'eration Math\'ematique des Hauts-de-France FR~2037 CNRS, 
Facult\'e Jean Perrin, Rue Jean Souvraz, S.P.\kern 1mm 18 
F-62\kern 1mm 300 LENS, FRANCE \\
daniel.li@univ-artois.fr
\smallskip

Herv\'e Queff\'elec \\
Univ. Lille Nord de France, USTL,  
Laboratoire Paul Painlev\'e U.M.R. CNRS 8524 \& F\'ed\'eration Math\'ematique des Hauts-de-France FR~2037 CNRS, 
F-59\kern 1mm 655 VILLENEUVE D'ASCQ Cedex, FRANCE \\
Herve.Queffelec@univ-lille.fr
\smallskip
 
Luis Rodr{\'\i}guez-Piazza \\
Universidad de Sevilla, Facultad de Matem\'aticas, Departamento de An\'alisis Matem\'atico \& IMUS,  
Calle Tarfia s/n  
41\kern 1mm 012 SEVILLA, SPAIN \\
piazza@us.es
}


\begin{thebibliography} {99}

\bibitem {Bo-Fo-Za} A.~A.~Borichev, K.~Fouchet, R.~Zarouf, 
On the Fourier coefficients of powers of a Blaschke factor and strongly annular functions, 
\emph{preprint}, arXiv:2107.00405 (2021).

\bibitem {Isabelle} I.~Chalendar, J.~R.~Partington, 
Norm estimates for weighted composition operators on spaces of holomorphic functions, 
Complex Anal. Oper. Theory 8, no. 5 (2014), 1087--1095.

\bibitem {IsabelleII} I.~Chalendar, J.~R.~Partington, 
Compactness and norm estimates for weighted composition operators on spaces of holomorphic functions,  
Harmonic analysis, function theory, operator theory, and their applications, 81--89, Theta Ser. Adv. Math. 19, Theta, Bucharest (2017).

\bibitem {Cowen}  C.~C.~Cowen, 
An application of Hadamard multiplication to operators on weighted Hardy spaces, 
Linear Algebra Appl. 133 (1990), 21--32.

\bibitem {Duren} P.~L.~Duren, 
Univalent functions, 
Grundlehren der Mathematischen Wissenschaften 259, Springer-Verlag, New York (1983). 

\bibitem {Fouchet} K.~Fouchet,
Puissances de facteurs et de produits de Blaschke : coefficients de Fourier et applications, 
th\`ese de doctorat, Universit\'e Aix-Marseille (8 d\'ecembre 2021).

\bibitem {Eva-Jonathan}  E.~A.~Gallardo-Guti\'errez, J.~R.~Partington, 
Norms of composition operators on weighted Hardy spaces, 
Israel J. Math. 196, no. 1 (2013), 273--283.

\bibitem {Goluzin} G.~M.~Goluzin, 
On majorants of subordinate analytic functions I, 
Mat. Sbornik, N.S. 29 (1951), 209--224. 

\bibitem {Kacnelson} V.~\`E~Kacnel'son, 
A remark on canonical factorization in certain spaces of analytic functions (Russian), 
in: Investigations on linear operators and the theory of functions III, edited by N.~K.~Nikol'ski{\u i}, 
Zap. Naucn. Sem. Leningrad. Otdel. Mat. Inst. Steklov (LOMI) 30 (1972), 163--164. 
Translation: J. Soviet Math. 4 (1975), no. 2 (1976), 444--445. 

\bibitem {Karim-Pascal} K.~Kellay, P.~Lef\`evre, 
Compact composition operators on weighted Hilbert spaces of analytic functions, 
J. Math. Anal. Appl. 386 (2012), 718--727.

\bibitem {Kriete-MacCluer} T.~L.~Kriete, B.~D.~MacCluer,
A rigidity theorem for composition operators on certain Bergman spaces,
Michigan Math. J. 42 (1995), 379--386.

\bibitem{L} N.~N.~Lebedev, 
Special functions and their applications, 
Revised edition, Dover Publications Inc., New-York (1972).

\bibitem {LLQR-comparison} P.~Lef\`evre, D.~Li, H.~Queff\'elec, L.~Rodr{\'\i}guez-Piazza, 
Comparison of singular numbers of composition operators on different Hilbert spaces of analytic functions, 
J. Funct. Anal. 280, no.~3 (2021), article 108834 -- https://doi.org/10.1016/j.jfa.2020.108834.

\bibitem {LQR-radius} D.~Li, H.~Queff\'elec, L.~Rodr{\'\i}guez-Piazza,  
A spectral radius type formula for approximation numbers of composition operators, 
J. Funct. Anal. 267, no.~12 (2014), 4753--4774.

\bibitem {Montgomery} H.~L~Montgomery, 
Ten lectures on the interface between analytic number theory and harmonic analysis, 
CBMS Regional Conference Series in Mathematics 84, 
American Mathematical Society, Providence RI (1994).

\bibitem {NS} D.~Newman, H.~Shapiro,  
The Taylor coefficients of inner functions, 
Michigan Math.~J. 9 (1962), 249--255.

\bibitem {Reich} E.~Reich, 
An inequality for subordinate analytic functions, 
Pacific J. Math. 4, no.~2 (1954), 259--274.

\bibitem {Shapiro-livre} J.~H.~Shapiro, 
Composition operators and classical function theory, 
Universitext, Tracts in Mathematics, Springer-Verlag, New York (1993).

\bibitem{SZ} G.~Szeg\"o, 
Orthogonal polynomials, 
American Mathematical Society Colloquium Publications, v.~23, American Mathematical Society, New York (1939).

\bibitem {SZZA} O.~Szehr, R.~Zarouf,  
A constructive approach to Schaeffer's conjecture, 
J. Math. Pures Appl. (9) 146 (2021), 1--30. 

\bibitem{TIT} E.~C.~Titchmarsh, 
The theory of the Riemann Zeta-function, 
Second edition revised by D.~R.~Heath-Brown, Oxford Science Publications (1986).

\bibitem {ZAR} R.~Zarouf,  
{\it Private communication.}

\end{thebibliography}
\end{document}